\documentclass[11pt]{amsart}

\usepackage{
    amsmath,
    amsfonts,
    amssymb,
    amsthm,
    amscd,
    comment,
    enumitem,
    etoolbox,
    textcomp,
    gensymb,    
    mathtools,
    mathdots,
    stmaryrd,
    graphicx
}
\usepackage[dvipsnames]{xcolor}
\usepackage{tikz-cd}
\usepackage{csquotes}
\usepackage{dsfont}
\usepackage{ytableau}
\usepackage{mathtools}

\usepackage[letterpaper,margin=1in]{geometry}
\usepackage{pgfornament}
\usepackage[x11names]{xcolor}
\usepackage{pgfornament}
\usepackage{adjustbox}

\usepackage{enumitem}

\newenvironment{enum}
  {\begin{enumerate}[label=\arabic*)]}
  {\end{enumerate}}

\usepackage[T1]{fontenc}
\usepackage{lmodern}            
\usepackage[colorlinks=true, linkcolor=blue, citecolor=blue, urlcolor=blue, breaklinks=true]{hyperref}

\usepackage{zref-clever}

\zcsetup{
  cap,                      
  nameinlink=false,         
  lastsep = {, and }        
}

\zcRefTypeSetup{assumption}{
    Name-sg = Assumption ,
    name-sg = assumption ,
    Name-pl = Assumptions ,
    name-pl = assumptions ,
}
\zcRefTypeSetup{cor}{
    Name-sg = Corollary ,
    name-sg = corollary ,
    Name-pl = Corollaries ,
    name-pl = corollaries ,
}
\zcRefTypeSetup{defin}{
    Name-sg = Definition ,
    name-sg = definition ,
    Name-pl = Definitions ,
    name-pl = definitions ,
}
\zcRefTypeSetup{enumi}{
    Name-sg = {} ,
    name-sg = {} ,
    Name-pl = {} ,
    name-pl = {} ,
}
\zcRefTypeSetup{equation}{
    Name-sg = {} ,
    name-sg = {} ,
    Name-pl = {} ,
    name-pl = {} ,
}
\zcRefTypeSetup{eg}{
    Name-sg = Example ,
    name-sg = example ,
    Name-pl = Examples ,
    name-pl = examples ,
}
\zcRefTypeSetup{lem}{
    Name-sg = Lemma ,
    name-sg = lemma ,
    Name-pl = Lemmas ,
    name-pl = lemmas ,
}
\zcRefTypeSetup{prop}{
    Name-sg = Proposition ,
    name-sg = proposition ,
    Name-pl = Propositions ,
    name-pl = propositions ,
}
\zcRefTypeSetup{rem}{
    Name-sg = Remark ,
    name-sg = remark ,
    Name-pl = Remarks ,
    name-pl = remarks ,
}
\zcRefTypeSetup{theo}{
    Name-sg = Theorem ,
    name-sg = theorem ,
    Name-pl = Theorems ,
    name-pl = theorems ,
}

\zcsetup{
  countertype = {
    enumi=item, enumii=item, enumiii=item, enumiv=item
  },
  counterresetby = {
    enumii=enumi, enumiii=enumii, enumiv=enumiii
  }
}
\zcRefTypeSetup{item}{
    Name-sg = {} ,
    name-sg = {} ,
    Name-pl = {} ,
    name-pl = {} ,
}

\AddToHook{env/assumption/begin}{\zcsetup{countertype={theo=assumption}}}
\AddToHook{env/cor/begin}{\zcsetup{countertype={theo=cor}}}
\AddToHook{env/defin/begin}{\zcsetup{countertype={theo=defin}}}
\AddToHook{env/eg/begin}{\zcsetup{countertype={theo=eg}}}
\AddToHook{env/lem/begin}{\zcsetup{countertype={theo=lem}}}
\AddToHook{env/prop/begin}{\zcsetup{countertype={theo=prop}}}
\AddToHook{env/rem/begin}{\zcsetup{countertype={theo=rem}}}

\newcommand{\cref}[1]{\zcref{#1}}
\newcommand{\Cref}[1]{\zcref[S]{#1}}

\makeatletter
\def\namedlabel#1#2{\begingroup
    (#2)%
    \def\@currentlabel{(#2)}%
    \phantomsection\label{#1}\endgroup
}
\makeatother

\newcommand\C{\mathbb{C}}
\newcommand\N{\mathbb{N}}
\newcommand\Q{\mathbb{Q}}

\newcommand\Z{\mathbb{Z}}
\newcommand{\Sc}{\mathbb{S}}

\newcommand{\gS}{\mathfrak{S}}

\definecolor{FieldsGreen}{HTML}{2b7443}

\DeclareMathOperator{\Hom}{Hom}

\DeclareMathOperator{\im}{im}                   

\newcommand{\Par}{\mathrm{Par}}
\newcommand{\SPar}{\mathrm{SPar}}

\newcommand{\SComp}{\mathrm{SComp}}
\newcommand{\Pol}{\mathrm{Pol}}
\newcommand{\LPol}{\mathrm{LPol}}
\newcommand{\Sym}{\mathrm{Sym}}
\newcommand{\LSym}{\mathrm{LSym}}
\newcommand{\LAlt}{\mathrm{LAlt}}
\newcommand{\Alt}{\mathrm{Alt}}

\newcommand{\sgn}{\mathrm{sgn}}
\newcommand{\GL}{\mathrm{GL}}

\usepackage{tikz}
\usetikzlibrary{arrows.meta,patterns}
\usetikzlibrary{shapes.geometric,decorations.markings,decorations.pathreplacing}
\usepackage{tikz-cd}

\tikzset{
    anchorbase/.style={
        >=To,
        line cap = round, line join = round,
        baseline={([yshift=-0.5ex]current bounding box.center)},
    }
}
\tikzset{
    centerzero/.style={
        >=To,
        line cap = round, line join = round,
        baseline={([yshift=-0.5ex]#1)},
        },
    centerzero/.default={0,0}
}

\newtheorem{theo}{Theorem}[section]
\newtheorem{prop}[theo]{Proposition}
\newtheorem{lem}[theo]{Lemma}
\newtheorem{cor}[theo]{Corollary}

\theoremstyle{definition}

\newtheorem{defin}[theo]{Definition}
\newtheorem{rem}[theo]{Remark}
\newtheorem{eg}[theo]{Example}

\numberwithin{equation}{section}
\allowdisplaybreaks

\setenumerate[1]{label=(\alph*)}          

\newtoggle{comments}
\newtoggle{details}
\newtoggle{detailsnote}

\iftoggle{comments}{%
    \usepackage[notref,notcite]{showkeys}   
    \newcommand{\acomments}[1]{
        \ \\
        {\color{red}
            \textbf{AS:} #1
        }
        \ \\
    }
    \newcommand{\ycomments}[1]{
        \ \\
        {\color{red}
            \textbf{YS:} #1
        }
        \ \\
    }
    \newcommand{\hcomments}[1]{
        \ \\
        {\color{cyan}
            \textbf{HF:} #1
        }
        \ \\
    }
    \newcommand{\ecomments}[1]{
        \ \\
        {\color{cyan}
            \textbf{EK:} #1
        }
        \ \\
    }
    \newcommand{\tcomments}[1]{
        \ \\
        {\color{cyan}
            \textbf{TL:} #1
        }
        \ \\
    }
    \newcommand{\mcomments}[1]{
        \ \\
        {\color{cyan}
            \textbf{MTD:} #1
        }
        \ \\
    }
    }{%
        \newcommand{\acomments}[1]{}
        \newcommand{\ycomments}[1]{}
        \newcommand{\hcomments}[1]{}
        \newcommand{\ecomments}[1]{}
        \newcommand{\tcomments}[1]{}
        \newcommand{\mcomments}[1]{}
    }

\definecolor{neongreen}{RGB}{57, 255, 20}

\iftoggle{details}{%
    \newcommand{\details}[1]{
        \ \\
        {\color{OliveGreen}
            \textbf{Details:} #1
        }
        \\
    }
}{%
    \newcommand{\details}[1]{\ignorespaces}
}

\usepackage{wasysym}

\makeatletter

\newlength{\eqalignref@lab}
\newlength{\eqalignref@tmp}

\newcommand{\eqalignref@measure}[1]{%
    \settowidth{\eqalignref@tmp}{%
        \ensuremath{\scriptstyle\text{\cref{#1}}}%
    }%
    \ifdim\eqalignref@tmp>\eqalignref@lab
        \global\eqalignref@lab=\eqalignref@tmp
    \fi
}

\newcommand{\eqalignref@finish}{%
    \settowidth{\eqalignref@tmp}{\ensuremath{=}}%
    \global\advance\eqalignref@lab -\eqalignref@tmp
    \ifdim\eqalignref@lab<\z@
        \global\eqalignref@lab=\z@
    \fi
    \global\eqalignref@lab=.5\eqalignref@lab
}

\newcommand{\eqalignref@pad}[1]{%
    \mspace{#1}\hspace{\eqalignref@lab}%
}

\newcommand{\eqalignref}[2][0mu]{%
    \global\eqalignref@lab=\z@
    \eqalignref@measure{#2}%
    \eqalignref@finish
    \eqalignref@pad{#1}%
    &\mathrel{\mathop{=}\limits^{%
        \mathclap{\text{\cref{#2}}}%
    }}%
    \eqalignref@pad{#1}%
}

\newcommand{\eqalignsref}[3][0mu]{%
    \global\eqalignref@lab=\z@
    \eqalignref@measure{#2}%
    \eqalignref@measure{#3}%
    \eqalignref@finish
    \eqalignref@pad{#1}%
    &\mathrel{\mathop{=}\limits^{%
        \mathclap{\text{\cref{#2}}}%
    }_{%
        \mathclap{\text{\cref{#3}}}%
    }}%
    \eqalignref@pad{#1}%
}
\makeatother

\begin{document}
\title{Laurent symmetric functions} 

\author{Hugo Fernandez}
\address[H.F.]{
    Facultad de CC. Matemáticas \\
    Universidad Complutense de Madrid	 \\
     Pl. de las Ciencias, 3, 28040 Madrid, Spain
}
\email{hugfer01@ucm.es}

\author{Elena Kleinwort}
\address[E.K.]{
    Department of Mathematics\\
    University of Hamburg \\
    20146 Hamburg, Germany
}
\email{elena.kleinwort02@gmail.com}

\author{Tianze Li}
\address[T.L.]{
    Department of Mathematics \\
    University of California \\
    Berkeley, CA 94720, United States
}
\email{tianzeli@berkeley.edu}

\author{Mario Torres Danta}
\address[M.T.D.]{
    Facultad de Matemáticas \\
     Universidad de Sevilla \\
   Calle Tarfia, 41012 Sevilla, Spain
}
\email{mtdanta@outlook.com}

\ifboolexpr{togl{comments} or togl{details}}{%
  {\color{magenta}DETAILS OR COMMENTS ON}
}{
}

\begin{abstract}
    We describe the ring of \textit{Laurent symmetric polynomials} in terms of generators and relations, giving an analogue of the Fundamental Theorem of Symmetric  Polynomials. We extend the Hall inner product, and give an algebraic proof that the \textit{Laurent Schur polynomials} form an orthonormal basis of this ring. In the case of infinitely many variables, we realize the ring of \textit{Laurent symmetric functions} as a direct limit of inverse limits, and relate them to characters of rational and algebraic representations of general linear groups.
\end{abstract}

\maketitle
\thispagestyle{empty}

\tableofcontents

\section{Introduction}
Laurent symmetric functions are generalizations of classical symmetric functions that allow variables with both positive and negative powers. These functions arise naturally in the representation theory of general linear groups. In particular, \textit{Laurent Schur polynomials} are the characters of finite dimensional irreducible rational representations of $\GL_n$, just as ordinary Schur polynomials are the characters of irreducible polynomial representations.

Despite their ubiquity, the theory of Laurent symmetric functions remains relatively unexplored. While Laurent symmetric functions have appeared in the literature as universal rational characters in infinite-dimensional representation theory \cite{Koi89}, the ring itself seems to have received no systematic treatment comparable to the classical theory of symmetric functions as found, for example, in \cite{Mac15, Ful97, Lit06}. The aim of this paper is therefore twofold. First, in analogy to the classical theory, we study the finite variable case of Laurent symmetric polynomials. Second, we define the ring of Laurent symmetric functions, relating it to \cite{Koi89} and \cite{Ful97}.

The organization is as follows. Starting from generalizations of partitions and compositions, we define the ring of \textit{Laurent symmetric polynomials} $\LSym_n$ in \cref{sec:def}. In \cref{sec:ft}, we define the \textit{Laurent monomial symmetric polynomials} $m_\lambda$ for signed partitions of an integer $k$, and show that these form a $\Z$-basis for the Laurent symmetric polynomials of degree $k$. We also show that $\LSym_n$ is generated over $\Z$ by  elementary symmetric polynomials $\{e_1,\dots,e_n,e_{-n}\}$, where $e_{-n}:=(x_1\cdots x_n)^{-1}$ or homogeneous polynomials $\{h_1,\dots,h_n,e_{-n}\}$, and over $\Q$ by power sums $\{p_1,\dots,p_n,e_{-n}\}$. 

In \cref{sec:lscp}, we define the \textit{Laurent alternating polynomials} and \textit{Laurent Schur polynomials}. Then, we extend the Hall inner product to the ring of Laurent symmetric polynomials by defining it to be the constant term of a product (this is the scalar product in \cite[Ch.~VI \S9]{Mac15} for $q=t$), and giving an algebraic proof that this definition satisfies the properties of an inner product. We refer to this inner product as the \textit{Hall inner product}, denoted $\langle \cdot, \cdot \rangle_n$, because it reduces to the usual Hall inner product on  the ring of symmetric polynomials. We give an algebraic proof for this by showing that the Laurent Schur polynomials form an orthonormal basis for the ring of Laurent symmetric polynomials with respect to the Hall inner product. 

In \cref{sec:lsf}, we turn our attention to the case of infinitely many variables. We define the ring of Laurent symmetric functions as a tensor product \[\LSym = \Lambda(X) \otimes_{\Z}\Lambda(Y),\] and show that it can be realized as a direct limit of inverse limits. In \cref{sec:lscf}, we generalize the definition of Schur functions to \textit{Laurent Schur functions}, which we show form a basis for the Laurent symmetric functions. Thus, we extend the definition of the Hall inner product to the ring of Laurent symmetric functions by requiring that the Laurent Schur functions are orthonormal with respect to it. We check that the extended Hall inner product is compatible with the projective map (for large enough $n$, where $n$ is the number of variables) of the aforementioned inverse limit.

In \cref{sec:repGLn}, we discuss an application of Laurent symmetric functions to the representation theory of the general linear group. We begin with a brief review of representations and characters, followed by polynomial and rational representations of $\GL_n$ and their weight space decompositions. Following \cite{Ful97}, we construct the Schur module $\Sc^\lambda(V)$ for a partition $\lambda$ as the image of a Young symmetrizer $y_T$ acting on $V^{\otimes r}$, where $r=\sum_i \lambda_i$. We show that the assignment
\[V \longmapsto \Sc^\lambda(V)\]
is functorial. Hence, each $g \in \mathrm{GL}(V)$ acts on $\Sc^\lambda(V)$ through $\Sc^\lambda(g)$, making $\Sc^\lambda(V)$ a polynomial representation of $\mathrm{GL}(V)$. We then recall equivalent constructions of the Schur module described in \cite{Ful97}, and that the Schur polynomials are the characters of the irreducible polynomial representations of $\mathrm{GL}_n$ (\cref{prop:char-sc}). By tensoring Schur modules with determinant representations, we extend this result to rational representations: the characters of the irreducible rational representations of $\mathrm{GL}_n$ are the Laurent Schur polynomials. We conclude this section by describing the behavior of Schur modules and their characters under taking dual representations.

In \cref{sec:repGLinf}, we turn to the algebraic representation theory of the infinite general linear group \(\mathrm{GL}_\infty\). Letting \(V=\mathbb{C}^{\infty}\) denote the natural representation and \(V_\vee\) its restricted dual, we define an algebraic representation of \(\mathrm{GL}_\infty\) to be a subquotient of a finite direct sum of mixed tensor powers
\[
    V^{\otimes n}\otimes V_\vee^{\otimes m}.
\]
We recall that the irreducible algebraic representations are indexed by signed partitions \((\lambda,\mu)\) \cite[Prop. 3.1.4]{SS15}. Since these representations are generally infinite dimensional, the usual definitions of a character in terms of the trace or weight spaces in $\GL_n$ cannot be used for $\GL_\infty$. We instead define characters using the Grothendieck ring $R(\mathrm{GL}_\infty)$ and results from \cite[\S 3.2]{SS19} and \cite[§2]{Koi89}, which allow us to identify the Laurent Schur functions as the characters of irreducible algebraic representations of $\GL_\infty$, as expected.
\section{Definition\label{sec:def}}

Throughout the text, the natural numbers are \(\N=\{0, 1, 2, \dots\} \). We let $\gS_n$ denote the symmetric group of degree $n$, i.e. the group of permutations of the set $\{1,\dots, n\}$. We denote the set of partitions by \(\Par\), and partitions of length at most \(n\) by \(\Par_n\). We let \(\Pol_n = \Z [x_1,\dots,x_n]\), and denote  the homogeneous polynomials of degree \(k\) by \(\Pol_n^k\). For more background, refer to \cite[Ch.~I]{Mac15}. 

\begin{defin}\label{def:scomp}
    A \emph{signed composition} (or \emph{generalized composition}) $\alpha$ of $k$ is a sequence 
    $(\alpha_{j})_{j = 1}^{\infty} $ of integers, all but finitely many terms of which are zero, such that $\sum_{ j = 1} ^{\infty} \alpha_j = k$.  An \(n\)-tuple of integers \((\alpha_j)_{j = 1}^n\) is identified with a signed composition by appending an infinite number of zeros. 
    
    We define $\SComp(k)$ to be the set of signed compositions of $k$, and $\SComp = \bigcup_{k \in \mathbb{Z}} \SComp(k)$ the set of all signed compositions.  We define
    \begin{equation}
        \SComp_n(k)=\{(\alpha_1, \alpha_2, \dots) \in \SComp(k) \mid \alpha_i=0 \text{ for all } i>n\},
    \end{equation}
    and $\SComp_n = \bigcup_{k \in \mathbb{Z}} \SComp_n(k)$.
\end{defin} 

If $\alpha, \beta \in \SComp_n$, we define $\alpha+\beta \in \SComp_n$ by entry-wise addition. 

We now generalize the notion of a partition in \cite[Ch.~I, \S1]{Mac15} to include negative parts.

\begin{defin} \label{def:SPar}
    A \emph{signed partition} (or \emph{generalized partition}) is an ordered pair 
    \begin{align*}
        \lambda = (\mu, \nu) 
    \end{align*}
    where $\mu $ and $\nu$ are partitions.

    The \emph{size} of $\lambda$ is defined as $\lvert\lambda\rvert = |\mu| - |\nu|$, and the \emph{length} is $\ell(\lambda) = \ell(\mu) + \ell(\nu)$, which is the number of nonzero entries, where \(|\mu|\) and \(\ell(\mu)\) are, respectively, the size and length of a usual partition, cf. \cite[Ch.~I]{Mac15}.  We write $\lambda \vdash_n k$ to indicate that $\lambda$ is a signed partition of $k$ (that is, $|\lambda|=k$) with \(\ell(\lambda) \leq n\). 
    
    We denote the set of all signed partitions by \emph{$\SPar$}, and the signed partitions of size $k \in \Z$ by $\SPar(k)$. The set of all signed partitions of length at most $n$ is denoted by $\SPar_n$ and \(\SPar_n(k)\) denotes the signed partitions of \(\SPar_n\) of size \(k\).
    Note that 
    \begin{align*}
        \SPar = \bigcup_k \SPar(k) = \bigcup_ n \SPar_n.
    \end{align*}
    When considering a fixed \(n\), the elements \(\lambda = (\mu,\nu)\) of \(\SPar_n\) will be written as \(n\)-tuples \((\lambda_j)_{j =1}^n\) of integers consisting of the nonzero entries in \(\mu\), the negative of the nonzero entries of \(\nu\) and zeros in such a way that \(\lambda_1 \geq \lambda_2 \geq \cdots \geq \lambda_n\), cf. the example below.
\end{defin}

\begin{eg}\label{eg:SPAR}
    Consider the signed partition \(\lambda = ((3,1),(4,2,1))\).  Its length is \(\ell (\lambda) = 5\) and \(|\lambda| = -3\). When considered as an element of \(\SPar_7 \) it is written as \((3,1,0,0,-1,-2,-4)\) and when considered as an element of \(\SPar_6\) as \((3,1,0,-1,-2,-4)\).
\end{eg}

\begin{rem} \label{rem:monkey}
    We have $\SPar_n(k)\subseteq \SComp_n(k)$ by the identification of $\lambda = (\mu, \nu) \in \SPar_n$ with the $n$-tuple $(\lambda_1, \dots, \lambda_n)$ and, for every $\alpha \in \SComp_n(k)$, there is a unique $\lambda \in \SPar_n(k)$ such that $\lambda$ is a reordering of $\alpha$. Thus, $\SPar_n(k)$ can be identified with the quotient of $\SComp_n(k)$ under the action of the symmetric group $\gS_n$ (where $\gS_n$ acts by permutation of the entries, see \cref{eq:alpha-pi}). 
\end{rem}

Recall from the theory of symmetric functions that a partition $\lambda$ may be represented by a \emph{Young diagram}, with $\lambda_1$ boxes in the first row, $\lambda_2$ boxes in the second row, and so forth. More formally, we define the Young diagram of \(\lambda\) by the subset of \(\Z^2\) formed by all tuples \((i,j)\) such that \(1 \leq j \leq \lambda_i\). We call each such tuple a \emph{cell}. A Young diagram for a partition $\lambda$ filled with a positive integer in each cell is said to be a \emph{Young tableau} (plural \textit{tableaux}) of \emph{shape} $\lambda$. 
\begin{eg}
    The Young diagram
    \begin{align}
        \ydiagram{4, 2, 1} \label{eq:yd}
    \end{align}
    corresponds to the partition $(4, 2, 1) \in \Par_7$.
\end{eg}

\begin{defin}\label{def:hl}
    For the $(i, j)$-th (row $i$, column $j$) cell in a Young diagram for a partition $\lambda$, the \emph{hook}, denoted $H_\lambda(i, j)$, is the set of cells $(a, b)$ such that $a=i$ and $b\geq j$, or $a \geq i$ and $b =j$. The number of cells in $H_\lambda(i, j)$ is said to be the \emph{hook length} of the $(i, j)$-th cell. 
\end{defin}

\begin{eg}
The following tableau of shape $\lambda=(4, 2, 1)$
\begin{align*}
    \begin{ytableau}
        6 & 4 & 2 & 1 \\
        3 & 1 \\
        1
    \end{ytableau}
\end{align*}
is obtained from the Young diagram \cref{eq:yd} by filling each cell with its hook length. In particular, the $(1, 2)$-th cell has hook length $4$ because $H_\lambda(1,2)$ is given by the following shaded cells.

\begin{align*}
    \begin{ytableau}
    ~&*(Yellow)~ & *(Yellow)~ &*(Yellow)~\\
    ~&*(Yellow)~ & ~ \\
    ~\\
    \end{ytableau}
\end{align*}
\end{eg}

If $\lambda$ is a (usual) partition, we denote by $\lambda'$ the \emph{conjugate} of $\lambda$, which is the partition corresponding to the conjugate of the Young diagram of $\lambda$ (the Young diagram of $\lambda$ reflected across the main diagonal). 
\begin{defin}
    Let $\lambda = (\mu, \nu) \in \SPar$. We define its \emph{conjugate} to be $(\mu', \nu')\in \SPar$, where $\mu'$ is the conjugate of $\mu$ and $\nu'$ is the conjugate of $\nu$. 
\end{defin}

\begin{eg}
    Consider again the signed partition $\lambda = ((3,1),(4,2,1))$ from \cref{eg:SPAR}. Then $\lambda' = (\mu', \nu') = ((2,1,1),(3,2,1,1))$.
\end{eg}

\begin{defin} \label{def:LPol}
We define the \emph{ring of Laurent polynomials (with integer coefficients)} in \(n \) variables to be
\[
\LPol_n := \Z [x_1^{\pm 1},\dots,x_n^{\pm 1}].
\]
For \(\alpha \in \SComp_n\) we let \(x^\alpha := x_1^{\alpha_1}\cdots x_n^{\alpha_n}\). Note that any Laurent polynomial can be written uniquely in the form 
\begin{align}
    \sum_{\alpha \in \SComp_n} c_\alpha x^\alpha, \qquad c_\alpha \in \Z,
\end{align}
with all but finitely many \(c_\alpha\) equal to zero.

We define the subgroup of \emph{Laurent polynomials of degree \(k \in \Z\) in \(n\) variables} to be
\[
\LPol_n^k = \left\{ \sum_{\alpha \in \SComp_n(k)} c_\alpha x^\alpha : c_\alpha \in \Z \right\}
\subseteq \LPol_n,
\]
where the sum over all \(\alpha \in \SComp_n (k)\) is understood to have only finitely many nonzero terms.  In this way, the ring \(\LPol_n\) becomes a \(\Z\)-graded ring 
\[
\LPol_n = \bigoplus_{k \in \Z} \LPol_n^k.
\]
\end{defin}

\begin{eg}
    For $n=2$, we have $x^{(1,0)} + x^{(1,-3)} =x_1+x_1x_2^{-3} \in \LPol_2$, where $x_1=x_1x_2^0 \in \LPol_2^1$ and $x^{(1,-3)} =x_1x_2^{-3} \in \LPol_2^{-2}$.
\end{eg}

Recall that the ring of symmetric polynomials $\Sym_n$ is defined to be the invariant elements of the polynomial ring under the action of the symmetric group \cite[Ch.I, \S2]{Mac15}. Then, let $\mathfrak{S}_n$ denote the symmetric group of order $n!$. 
For $f(x_1, \dots, x_n) \in \LPol_n^k$ and $\pi \in \mathfrak{S}_n$, $\pi$ acts on $f(x_1, \dots, x_n)$ via
\[\pi (f(x_1, x_2, \cdots, x_n) )=f(x_{\pi(1)}, x_{\pi(2)}, \cdots, x_{\pi(n)}).\] 
This defines a left action of $\gS_n$ on $\LPol_n$ for all $n$.

\begin{eg}
    Consider $f(x_1, x_2, x_3)=2x_3x_2^2+x_1^3x_3^2$ and $\pi=(123)$, $\sigma=(12)$, both in cycle notation. By definition, we have $\pi (f(x_1, x_2, x_3))=f(x_{\pi(1)}, x_{\pi(2)}, x_{\pi(3)})=f(x_2, x_3, x_1)$, and 
    \begin{align*}
        \sigma(\pi(f))(x_1, x_2, x_3) =\sigma(f(x_2, x_3, x_1)) =\sigma (2x_1x_3^2+x_2^3x_1^2) = 2x_2x_3^2+x_1^3x_2^2.
    \end{align*}
    On the other hand, $\sigma \circ \pi=(12)(123)=(23)$, so 
    \begin{align}
        (\sigma \circ \pi) f(x_1, x_2, x_3)& =(23) f= 2x_2x_3^2+x_1^3x_2^2.
    \end{align}
\end{eg}

\begin{defin}\label{def:LSymkn}
    Let $\LSym_n^k=(\LPol_n^k)^{\mathfrak{S}_n}=\{f \in \LPol_n^k \mid \pi f=f \text{ for all } \pi \in \mathfrak{S}_n\}$ be the subgroup of Laurent polynomials invariant under the action of $\gS_n$.
    The \emph{ring of Laurent symmetric polynomials in $n$ variables} is 
    \begin{align}\label{eq:LauGrad}
        \LSym_n= (\LPol_n)^{\mathfrak{S}_n} = \bigoplus_{k \in \Z} \LSym_n^k.
    \end{align}
    The second equality comes from the fact that the elements of the symmetric group act as graded ring automorphisms on $\LPol_n$.
\end{defin}

\begin{eg}
     For $n=2$, we have $x_1+x_2+x_1x_2^{-3}+x_1^{-3}x_2 \in \LSym_2$.
\end{eg}

We define a right action of $\mathfrak{S}_n$ on $\SComp_n$. For $\alpha = (\alpha_1, . . . , \alpha_n) \in \SComp_n$ and $\pi \in \mathfrak{S}_n$ we define 
\begin{align}
    \alpha ^\pi := (\alpha_{\pi(1)}, . . .,\alpha_{\pi(n)}). \label{eq:alpha-pi}
\end{align}

The orbit of $\alpha \in\SComp_n(k)$ under the action of $\mathfrak{S}_n$ is denoted by 
\begin{align}
    \alpha\mathfrak{S}_n := \{ \alpha^\pi \mid \pi \in \mathfrak{S}_n\}.
\end{align}

\begin{defin}
    The \emph{monomial symmetric polynomials $m_\lambda$} for $\lambda \in \Par_n$ are
\begin{align*}
    m_\lambda := \sum_{\alpha \in \lambda\gS_n} x^\alpha,
\end{align*}
(for more details, see \cite[Ch.~I, \S2, Eq.~(2.1)]{Mac15}).

We extend this definition to signed partitions. The \emph{Laurent monomial symmetric polynomials $m_\lambda$} are
\begin{align}
    m_\lambda(x_1, \dots, x_n) :=\sum_{\beta \in \lambda \mathfrak{S}_n} x^\beta, \quad \lambda \in \SPar_n(k).
\end{align}
We also denote $m_\lambda(x_1, \dots, x_n ) $ by $m_\lambda^{(n)}$.
\end{defin}

\begin{eg}
    For $\lambda=(2, 0, -1)$, 
    we have \[\lambda\gS_3 = \{(2,-1,0), (2,0,-1), (0,2,-1), (0,-1,2), (-1,0,2),(-1,2,0) \}\] and thus
    \begin{align*}
        m_{\lambda}(x_1, x_2, x_3)=\sum_{\mu \in \lambda \mathfrak{S}_3}x^\mu=x_1^2x_3^{-1}+x_1^2x_{2}^{-1}+x_2^2x_3^{-1}+x_2^{-1}x_3^2+x_{1}^{-1}x_3^2+x_1^{-1}x_2^2.
    \end{align*}
\end{eg}

\begin{prop}
    The Laurent monomial symmetric polynomials $m_\lambda(x_1, \dots, x_n)$ for $\lambda \in \SPar_n(k)$ form a $\Z$-basis for $\LSym_n^k$.
\end{prop}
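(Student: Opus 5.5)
The proof will be a direct orbit-decomposition argument, with no appeal to the polynomial case. Fix $n$ and $k$. Two ingredients come from earlier in the paper. The monomials $x^\alpha$, $\alpha \in \SComp_n(k)$, form a $\Z$-basis of $\LPol_n^k$ (\cref{def:LPol}). By \cref{rem:monkey}, $\SComp_n(k)$ is the disjoint union of the orbits $\lambda\gS_n$ as $\lambda$ runs over $\SPar_n(k)$, with exactly one $\lambda \in \SPar_n(k)$ in each orbit.

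First I will check that each $m_\lambda$ lies in $\LSym_n^k$. Each $\beta \in \lambda\gS_n$ has entries summing to $k$, so $m_\lambda \in \LPol_n^k$. To get invariance, I will compute how $\pi \in \gS_n$ acts on a monomial: $\pi(x^\beta) = x^{\beta^{\sigma}}$ for a suitable permutation $\sigma$ (namely $\sigma = \pi^{-1}$, up to the left/right convention in \cref{eq:alpha-pi}). Since $\beta \mapsto \beta^{\sigma}$ is a bijection of the orbit $\lambda\gS_n$ onto itself, $\pi$ only permutes the terms of $m_\lambda$, so $\pi m_\lambda = m_\lambda$. Getting the index convention right is the only fiddly point; all that matters is that $\pi$ sends monomials to monomials by permuting exponents.

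Next, linear independence. Distinct $\lambda, \lambda' \in \SPar_n(k)$ have disjoint orbits, so the $m_\lambda$ are sums over pairwise disjoint subsets of the monomial basis. Any relation $\sum c_\lambda m_\lambda = 0$ then forces every $c_\lambda = 0$: the coefficient of $x^\lambda$ in the sum is exactly $c_\lambda$.

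Finally, spanning. Take $f = \sum_{\alpha} c_\alpha x^\alpha \in \LSym_n^k$. Since $\pi f = f$ and $\pi$ permutes the monomials, comparing coefficients gives $c_{\alpha^{\sigma}} = c_\alpha$ for all $\sigma \in \gS_n$. So $c_\alpha$ is constant on each orbit $\lambda\gS_n$, and grouping terms by orbit yields
\begin{align*}
    f = \sum_{\lambda \in \SPar_n(k)} c_\lambda\, m_\lambda .
\end{align*}
This is a finite sum, because only finitely many $c_\alpha$ are nonzero and hence only finitely many orbits contribute. The coefficients $c_\lambda$ are integers, so the $m_\lambda$ span $\LSym_n^k$ over $\Z$. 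I expect no real obstacle: the only care needed is the bookkeeping between the left action on $\LPol_n$ and the right action on $\SComp_n$, and the finiteness of orbits, which holds since $|\gS_n| = n!$.
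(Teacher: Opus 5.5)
Your proof is correct and is essentially the paper's argument. Linear independence comes from the disjointness of the orbits $\lambda\gS_n$ (\cref{rem:monkey}), by reading off the coefficient of $x^\lambda$, and spanning comes from the fact that invariance makes coefficients constant on orbits. The differences are cosmetic: the paper proves spanning by induction on the number of nonzero monomials, subtracting $c_\alpha m_\lambda$ at each step, where you group terms by orbit directly, and you also check that $m_\lambda \in \LSym_n^k$, which the paper takes for granted.
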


\begin{proof}
    We show that $\{m_\lambda \mid \lambda \vdash_n k\}$ spans $\LSym_n^k$. Let $f = \sum_\alpha c_\alpha x^\alpha \in \LSym_n^k$. Then $f$ is a sum of monomials of degree $k$. 
    We proceed by induction on the number $q$ of nonzero $c_\alpha$. 
    The case  $q=0$ is clear, since it follows that $f=0$. Suppose the statement holds for all numbers smaller than  $q \in \N$. By definition, $\pi f =f$ for all $\pi \in \mathfrak{S}_n$. Suppose \(c_\alpha x^\alpha\) is a non-zero monomial of \(f\).
    Since $\pi f =f$, every $ x^\beta$, where $\beta \in \alpha\mathfrak{S}_n$ must also appear in $f$ with the same coefficient $c_\alpha$. Thus, if \(\lambda\) is the signed partition obtained from reordering \(\alpha\), then \(f - c_\alpha m_\lambda\) is a Laurent symmetric polynomial with fewer non-zero monomials, and so we may apply the induction hypothesis. Thus, $f - c_\alpha m_\lambda$ is in the span of $\{m_\lambda \mid \lambda \vdash_n k\}$ and so is $f$. 

    It remains to show that $m_\lambda$, $\lambda \in \SPar_n(k)$ are linearly independent. Suppose 
    \begin{align}
        f = \sum_{\lambda \in \SPar_n(k)} c_\lambda m_\lambda =0, \quad c_\lambda \in \Z \text{ for all } \lambda \in \SPar_n(k). \label{eq:ldrel}
    \end{align}
    For \(\lambda, \lambda' \in \SPar_n(k)\) such that \(\lambda \neq \lambda'\) we have \(\lambda\mathfrak{S}_n \cap \lambda' \mathfrak{S}_n = \varnothing\) as a consequence of \cref{rem:monkey}. Thus, the Laurent polynomials \(m_\lambda, m_{\lambda'}\) have no monomials in common so the coefficient of the monomial $x^\lambda $ in $f$ is $c_\lambda$ for all \(\lambda \in \SPar_n (k)\) and thus   \(c_\lambda = 0\)  for $f$ to be equal to zero.
\end{proof}

\section{The fundamental theorem} \label{sec:ft}

In this section, all symmetric polynomials, e.g. $m_\lambda, e_r, h_r, p_r, s_\lambda$, are assumed to be in \(n\) variables, for some natural number $n\geq 1$ fixed throughout this section and the next. Later, in \cref{sec:lsf}, when we consider \textit{symmetric functions} in infinite variables, we will denote the finite variable polynomials with a superscript indicating the number of variables, such as $e_r^{(n)}, s_{\lambda}^{(n)}$, etc.

If \(R\) is a ring and \(S \subseteq R\) a subring, then for \(r_1,\dots,r_n \in R\), \(S[r_1,\dots,r_n] \subseteq R\) is the smallest subring of \(R\) which contains \(S\) and \(r_1,\dots,r_n\). In contrast, we will use uppercase letters, e.g. \(T_1,T_2,\dots\), to denote by $R[T_1, T_2, \dots, T_n]$ the polynomial ring in independent variables.
\begin{defin}
    The \emph{$r$-th elementary symmetric polynomial} in $n$ variables is
     \begin{align*}
         e_r(x_1, \dots, x_n) &= \sum_{1 \leq i_1 < \cdots < i_r \leq n} x_{i_1}\cdots x_{i_r} ,  &\text{for } r &> 0, \\
         e_0(x_1, \dots, x_n)&=1,  &\text{for } r &= 0.
     \end{align*}
     Note that for \( r > n\), \(e_r(x_1,\dots,x_n) = 0\).
\end{defin}

\begin{eg}
    The elementary symmetric polynomials in three variables are 
    \begin{align*}
        e_1(x_1, x_2, x_3)&=x_1+x_2+x_3, \\ 
    e_2(x_1, x_2, x_3)&=x_1x_2+x_2x_3+x_1x_3, \\
    e_3(x_1, x_2, x_3)&=x_1x_2x_3.
    \end{align*}
     In particular, for all $n \in \N_{\geq 1}$, 
     \begin{align}
         e_n(x_1, \dots, x_n)=x_1\cdots x_n.
     \end{align}
\end{eg}

For $\lambda \in \Par$, let $e_\lambda=e_{\lambda_1}\cdots e_{\lambda_{\ell(\lambda)}}$, where $e_i$ are the elementary symmetric polynomials in $n$ variables. 
\begin{prop}[{\cite[Ch.~I, \S2, Eq.~(2.4)]{Mac15}}]\label{prop:ftsp}
For $n \in \N_{\geq 1}$, it holds that
\begin{enumerate}
    \item The elementary symmetric polynomials $e_1(x_1,\dots,x_n),\dots,e_n(x_1,\dots,x_n)$ are algebraically independent over \(\mathbb{Z}\),
    \item The ring of symmetric polynomials in $n$ variables is generated over $\Z$ by the elementary symmetric polynomials,\[\Sym_n=\mathbb{Z}[e_1(x_1,\dots,x_n),\ldots,e_n(x_1,\ldots,x_n)].\]
\end{enumerate}
Thus, the elements $e_\lambda(x_1,\dots,x_n)$ for $\lambda \in \Par,\ \lambda_1 \leq n$ form a $\Z$-basis of $\Sym_n$.
\end{prop}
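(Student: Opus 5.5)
The plan is the classical argument via the dominance (or lexicographic) order on partitions, working inside $\Sym_n$ with the basis of monomial symmetric polynomials $m_\lambda$, $\lambda \in \Par_n$. These are a $\Z$-basis of $\Sym_n$ by the same argument as the Proposition for $m_\lambda$ on signed partitions, restricted to nonnegative exponents. The key computation concerns $e_{\lambda'}$ for $\lambda \in \Par_n$: its conjugate $\lambda'$ has parts at most $n$, and we will show
\[
e_{\lambda'} = m_\lambda + \sum_{\mu < \lambda} a_{\lambda\mu} m_\mu, \qquad a_{\lambda\mu} \in \N,
\]
where $<$ is the lexicographic order on partitions of $|\lambda|$ of length at most $n$. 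To see this, expand $e_{\lambda'} = \prod_j e_{\lambda'_j}$ and choose from each factor $e_{\lambda'_j}$ a monomial $x_{i_1}\cdots x_{i_{\lambda'_j}}$ with $i_1<\dots<i_{\lambda'_j}$. The exponent of any resulting monomial is dominated by the one obtained by always taking $x_1\cdots x_{\lambda'_j}$, which gives $x^\lambda$, and that choice is the unique way to produce $x^\lambda$. So the leading term is $m_\lambda$ with coefficient $1$.

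For generation, part (b), we use unitriangularity. Given $f\in\Sym_n$ homogeneous of degree $k$ (homogeneous components of symmetric polynomials are symmetric), write $f=\sum c_\lambda m_\lambda$. Take the lexicographically largest $\lambda$ with $c_\lambda\ne 0$, subtract $c_\lambda e_{\lambda'}$, and induct downward; this terminates because $\Par_n(k)$ is finite. Hence $\Sym_n=\Z[e_1,\dots,e_n]$ and the $e_\nu$ with $\nu_1\le n$ span $\Sym_n$.

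For algebraic independence, part (a), note that $\lambda\mapsto\lambda'$ is a bijection between partitions of length at most $n$ and partitions with largest part at most $n$. The triangularity therefore says that the transition matrix from $\{e_{\lambda'}\}$ to $\{m_\lambda\}$ in each degree is unitriangular, hence invertible over $\Z$. Distinct monomials $e_1^{a_1}\cdots e_n^{a_n}$ are exactly distinct $e_\nu$ with $\nu_1\le n$, so a nontrivial polynomial relation among the $e_i$ would give a nontrivial linear relation among the $e_{\lambda'}$. Such a relation contradicts the linear independence of the $m_\lambda$; more concretely, the lex-largest $\lambda$ with nonzero coefficient contributes $x^\lambda$ uncancelled. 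The basis statement follows by combining (a) and (b).

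The main obstacle is the triangularity step, specifically the combinatorial claim that every monomial of $e_{\lambda'}$ has an exponent lexicographically at most $\lambda$. I will handle it by viewing a term as a 0-1 filling of a $\lambda'_1\times n$ array, where each row $j$ has $\lambda'_j$ ones. The exponent of $x_i$ is the column sum, and pushing ones leftward in each row only increases the exponent vector lexicographically, so the maximum is attained, uniquely, at the left-justified filling, whose column sums are $\lambda$.
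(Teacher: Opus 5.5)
Your argument is correct and is essentially the proof the paper defers to, namely Macdonald's (2.3)--(2.4): unitriangularity $e_{\lambda'} = m_\lambda + \sum_{\mu<\lambda} a_{\lambda\mu} m_\mu$ via the $0$--$1$ filling, from which generation and algebraic independence are read off; you use the lexicographic order in place of dominance, which is harmless since lex refines dominance. One small slip: the array has one row per factor $e_{\lambda'_j}$, so it has $\ell(\lambda') = \lambda_1$ rows, not $\lambda'_1$.
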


\cref{prop:ftsp} is often said to be the \textit{fundamental theorem of symmetric polynomials}. 

\begin{theo}\label{theo:gene_n}
\begin{enumerate}
    \item\label{theo3.3a} The elementary symmetric polynomials \(e_1,\dots,e_n\) together with 
    \begin{align}
        e_{-n} := 1/e_n = x_1^{-1}\cdots x_n^{-1}
        \label{eq:e_n=1}
    \end{align}
    generate $\LSym_n$ as a $\Z$-algebra. Equivalently,
    \[\LSym_n=\Z[e_1, \dots, e_n, e_{-n}].\]

    \item\label{theo3.3b} The map given by
    \begin{align*}
        \varphi \colon \frac{\Z[T_1,\dots,T_n,T_{n + 1}]}{(T_n T_{n + 1} - 1)} &\longrightarrow \LSym_n, \\
        T_i &\longmapsto e_i \quad \text{for }i \in \{ 1,\dots,n\}, \\
        T_{n+1} &\longmapsto e_{-n},
    \end{align*}
    is well defined and is an isomorphism.
\end{enumerate}
\end{theo}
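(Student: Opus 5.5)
For part \ref{theo3.3a}, I will clear denominators and then apply \cref{prop:ftsp}. The inclusion $\Z[e_1,\dots,e_n,e_{-n}]\subseteq \LSym_n$ holds because each generator is symmetric, and $\LSym_n$ is a subring of $\LPol_n$ since $\gS_n$ acts by ring automorphisms.

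For the converse, take $f\in\LSym_n$ and write it as a finite sum $\sum_\alpha c_\alpha x^\alpha$. Choose $N\in\N$ with $\alpha_i\geq -N$ for every $i$ and every $\alpha$ with $c_\alpha\neq 0$. Then $e_n^N f=(x_1\cdots x_n)^N f$ is an honest polynomial. It is symmetric because $e_n$ and $f$ are both $\gS_n$-invariant. By \cref{prop:ftsp} we can write $e_n^N f=P(e_1,\dots,e_n)$ for some $P\in\Z[T_1,\dots,T_n]$. Hence $f=e_{-n}^N P(e_1,\dots,e_n)\in\Z[e_1,\dots,e_n,e_{-n}]$.

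For part \ref{theo3.3b}, let $\tilde\varphi$ be the homomorphism $\Z[T_1,\dots,T_{n+1}]\to\LSym_n$ sending $T_i\mapsto e_i$ and $T_{n+1}\mapsto e_{-n}$. Since $\tilde\varphi(T_nT_{n+1}-1)=e_ne_{-n}-1=0$, the map factors through the quotient, so $\varphi$ is well defined. Surjectivity is exactly part \ref{theo3.3a}.

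Injectivity is the main obstacle. Every class in $\Z[T_1,\dots,T_{n+1}]/(T_nT_{n+1}-1)$ has a representative of the form $T_{n+1}^N Q(T_1,\dots,T_n)$. To see this, multiply a monomial $T^\beta T_{n+1}^{b}$ by $(T_nT_{n+1})^{N-b}\equiv 1$ for $N$ large, which turns it into $T_{n+1}^N$ times a monomial in $T_1,\dots,T_n$. Now suppose $\varphi$ kills such a class, so $e_{-n}^N Q(e_1,\dots,e_n)=0$ in $\LPol_n$. Multiplying by $e_n^N$ gives $Q(e_1,\dots,e_n)=0$, and then the algebraic independence in \cref{prop:ftsp} forces $Q=0$. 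So the class is zero.

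An alternative is to identify the quotient with the localization $\Z[T_1,\dots,T_n][T_n^{-1}]$ and use exactness of localization: $\Z[T_1,\dots,T_n]\cong\Sym_n$ by \cref{prop:ftsp}, and the localization $\Sym_n[e_n^{-1}]$ embeds in $\LPol_n$ because $e_n$ is a non-zero-divisor there. I would nonetheless present the direct representative argument above.
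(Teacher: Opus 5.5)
Your proof is correct and follows essentially the same route as the paper: in both parts you clear denominators with a power of $e_n$ and then invoke \cref{prop:ftsp}, first for generation and then, via algebraic independence, for injectivity. The only difference is in the injectivity step. You choose the normal form $T_{n+1}^N Q(T_1,\dots,T_n)$ for classes in the quotient, whereas the paper uses the reduced form $g'_0(T_1,\dots,T_n)+\sum_{j} g'_j(T_1,\dots,T_{n-1})T_{n+1}^j$; your choice is slightly more economical because it avoids the paper's final comparison of coefficients in $T_n$.
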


\begin{proof}
     For part \cref{theo3.3a},
     we need to show that any Laurent symmetric polynomial can be written as a polynomial in \(e_1,\dots,e_n,e_{-n}\). Indeed, given any \(\ell(x_1,\dots,x_n) \in \mathrm{LSym}_n\), we know that for sufficiently large \(d \in \N\), \(e_n^d \ell(x_1,\dots,x_n) \in \Sym_n\). By \cref{prop:ftsp}, 
     \[ \Sym_n=\Z[e_1,\dots,e_n],\] 
     and hence, there exists a polynomial $q(T_1,\dots,T_n) \in \Z[T_1,\dots,T_n]$ such that \(e_n^d \ell (x_1,\dots,x_n) = q(e_1,\dots,e_n)\).
     Therefore, \(\ell(x_1,\dots,x_n) = e_{-n}^d q(e_1,\dots,e_n)\).

     For part \cref{theo3.3b}, consider the ring morphism
     \[
     \varphi \colon \Z [T_1,\dots,T_n, T_{n + 1}] \longrightarrow \LSym_n
     \]
     defined by \(\varphi(T_i) = e_i\) for \(1 \leq i \leq n\) and \(\varphi (T_{n + 1}) = e_{-n}\), so that \(\varphi(g(T_1,\dots,T_{n},T_{n + 1})) = g(e_1,\dots,e_n, e_{-n})\). From part (a), we know that $\varphi$ is surjective. Since \(T_n T_{n + 1} - 1 \in \ker \varphi\), we can factor through the quotient:
     \begin{equation}
          \begin{tikzcd}[row sep=1cm, column sep=2.5cm,arrow style=tikz]
    \mathbb{Z}[T_1, \dots, T_{n+1}] \arrow[d] \arrow[dr,twoheadrightarrow, "\varphi"] & \\
    \displaystyle\frac{\mathbb{Z}[T_1, \dots, T_{n+1}]}{(T_n T_{n+1} - 1)} \arrow[r, twoheadrightarrow, "\tilde{\varphi}"] & \LSym_n
    \end{tikzcd} \label{eq:phiphitilde}
     \end{equation}
    It remains to show that \(\tilde{\varphi}\) is injective, or equivalently, that if \(\varphi(g) = 0\) for some \(g(T_1,\dots,T_{n + 1}) \in \Z[T_1,\dots,T_{n + 1}]\), then \(g(T_1,\dots,T_{n + 1}) \equiv 0 \pmod{T_n T_{n + 1} - 1}\). Let \(g(T_1,\dots,T_{n + 1}) \in \ker \varphi\). Without loss of generality, suppose
    \[
        g(T_1,\dotsc,T_{n+1}) = \sum_{a,b \in \N} c_{a,b}(T_1,\dotsc,T_{n-1}) T_n^a T_{n+1}^b,
    \]
    where $c_{a,b}(T_1,\dotsc,T_{n-1})\in \Z[T_1,\dotsc,T_{n-1}]$. Since \(T_n T_{n + 1} \equiv 1 \pmod{T_nT_{n + 1} - 1}\) in the quotient $\frac{\mathbb{Z}[T_1, \dots, T_{n+1}]}{(T_n T_{n+1} - 1)}$, 
    \begin{align*}
        g(T_1,\dotsc,T_{n+1}) & = \sum_{a,b \in \N} c_{a,b}(T_1,\dotsc,T_{n-1}) T_n^a T_{n+1}^b \\
        &\equiv \sum_{a \geq b} c_{a,b}(T_1, \dots, T_{n-1})T_n^{a - b} + \sum_{a < b} c_{a,b}(T_1, \dots, T_{n-1})T_{n+1}^{b - a}  \pmod{T_n T_{n + 1} - 1} \\
        &=: g'(T_1,\dots,T_{n+1}) 
    \end{align*}
This last polynomial \(g'(T_1,\dots,T_{n + 1})\) can be written as
 \[
 g'(T_1,\dots,T_{n+1}) = g'_0 (T_1,\dots,T_n) + \sum_{j = 1}^k g'_j(T_1,\dots,T_{n - 1})T_{n +1}^j,
 \]
 for some \(k \in \N\) and some polynomials
 \begin{align*}
     g'_0(T_1,\dots,T_n) &\in \Z[T_1,\dots,T_n] \\
     g'_j(T_1,\dots,T_{n - 1}) &\in \Z [T_1,\dots,T_{n - 1}].
 \end{align*}

    By assumption, $\varphi(g)=0$. Since the diagram \cref{eq:phiphitilde} commutes,
    we have \(\varphi(g') = 0\) as well. As such, we have
    \[
      0 = g'(e_1,\dots,e_n,e_{-n}) = g'_0(e_1,\dots,e_n) + \sum_{i =  1}^{k}g'_i(e_1,\dots,e_{n -1})e_{-n}^i.
    \]
    Multiplying by \(e_n^{k}\) on both sides, we get
    \[
    0=g'_0(e_1,\dots,e_n)e_n^{k} + \sum_{i =  1}^{k}g'_i(e_1,\dots,e_{n -1})e_{n}^{k - i}.
    \]
    This is a polynomial relation of \(e_1,\dots,e_n\). But because \(e_1,\dots,e_n\) are algebraically independent over \(\Z\) (see \cref{prop:ftsp}), the polynomial
    \[
    g'_0(T_1,\dots,T_n)T_n^{k} + \sum_{i =  1}^{k}g'_i(T_1,\dots,T_{n -1})T_{n}^{k - i}
    \]
    must be the 0 polynomial in \(\Z[T_1,\dots,T_n]\). Then, by comparing the coefficients of the variable \(T_{n}\), we find
    \[
    g'_0(T_1,\dots,T_n) = g'_1(T_1,\dots,T_{n - 1}) = \cdots = g'_{k }(T_1,\dots,T_{n - 1}) = 0.
    \]
    Therefore, \(g'(T_1,\dots,T_{n + 1}) = 0\) and \(g(T_1,\dots,T_{n + 1}) \equiv 0 \pmod{T_nT_{n + 1} - 1}\), as desired.
\end{proof}

\begin{rem} We have used in the proof of \cref{theo:gene_n} that $e_{-n}$ defined in \cref{eq:e_n=1} is the inverse of $e_n$. However, in general, it is not true that
\begin{align}
     e_{i}(x_1^{-1}, \dots, x_n^{-1}) = e_i(x_1, \dots, x_n)^{-1}. \label{eq:elt-inv}
\end{align}
In fact, \cref{eq:elt-inv} only holds if $i = n$. 
\end{rem}

\begin{cor} \label{cor:FTLSP}
    The set 
    \begin{align}
        \{e_1^{\alpha_1} \cdots e_{n - 1}^{\alpha_{n - 1}} e_n^{\alpha_n} \mid (\alpha_1,\dots,\alpha_{n -1},\alpha_n) \in \N^{n -1} \times \Z \} \label{eq:elt-basis}
    \end{align} forms a $\Z$-basis of \(\LSym_n\).
\end{cor}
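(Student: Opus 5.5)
The plan is to deduce this directly from \cref{theo:gene_n} and \cref{prop:ftsp}, treating spanning and linear independence separately. Throughout, $e_n^{\alpha_n}$ with $\alpha_n<0$ means $e_{-n}^{-\alpha_n}$.

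\textbf{Spanning.} By \cref{theo:gene_n}\cref{theo3.3a}, every $\ell \in \LSym_n$ is a polynomial in $e_1,\dots,e_n,e_{-n}$. Expanding this polynomial gives a $\Z$-linear combination of products $e_1^{a_1}\cdots e_{n-1}^{a_{n-1}} e_n^{a} e_{-n}^{b}$ with all exponents in $\N$. Since $e_ne_{-n}=1$, each such product equals $e_1^{a_1}\cdots e_{n-1}^{a_{n-1}}e_n^{a-b}$. This is an element of the set \cref{eq:elt-basis}, with $\alpha_n=a-b\in\Z$. So the set spans $\LSym_n$.

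\textbf{Linear independence.} Suppose we have a finite relation
\[
\sum_{\alpha} c_\alpha\, e_1^{\alpha_1}\cdots e_{n-1}^{\alpha_{n-1}}e_n^{\alpha_n}=0, \qquad c_\alpha\in\Z .
\]
Choose $d\in\N$ with $d+\alpha_n\ge 0$ for every $\alpha$ that occurs, and multiply the relation by $e_n^d$. The result is a polynomial relation
\[
\sum_\alpha c_\alpha\, e_1^{\alpha_1}\cdots e_{n-1}^{\alpha_{n-1}}e_n^{\alpha_n+d}=0
\]
in $e_1,\dots,e_n$ with exponents in $\N$. The map $(\alpha_1,\dots,\alpha_n)\mapsto(\alpha_1,\dots,\alpha_{n-1},\alpha_n+d)$ is injective, so distinct $\alpha$ give distinct monomials $T^{\beta}$. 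Algebraic independence of $e_1,\dots,e_n$ over $\Z$ (\cref{prop:ftsp}) then forces every $c_\alpha$ to vanish.

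An alternative route is to observe that, through the isomorphism $\tilde\varphi$ of \cref{theo:gene_n}\cref{theo3.3b}, the ring $\Z[T_1,\dots,T_{n+1}]/(T_nT_{n+1}-1)$ is $\Z[T_1,\dots,T_{n-1}][T_n^{\pm1}]$, whose monomial basis maps onto the set \cref{eq:elt-basis}. However, the direct argument above avoids having to justify that ring identification.

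The only step needing care is choosing the common shift $d$, together with checking that it keeps distinct basis elements distinct. Since this is immediate, I expect no real obstacle.
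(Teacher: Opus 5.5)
Your proposal is correct and is essentially the paper's proof. Spanning comes from \cref{theo:gene_n} together with $e_ne_{-n}=1$, and linear independence is obtained by clearing negative powers of $e_n$ and appealing to \cref{prop:ftsp}; the only cosmetic difference is that the paper multiplies by $e_n^{-m}$ with $m$ the minimal occurring exponent $\alpha_n$, whereas you multiply by any sufficiently large $e_n^{d}$ and state explicitly that the exponent shift is injective, which the paper leaves implicit.
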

\begin{proof}
        It follows from \cref{theo:gene_n} that \(e_1^{\alpha_1} \cdots e_{n - 1}^{\alpha_{n - 1}} e_n^{\alpha_n}\) spans \(\LSym_n\). It remains to show that the \(e_1^{\alpha_1} \cdots e_{n - 1}^{\alpha_{n - 1}} e_n^{\alpha_n}\) are linearly independent. Assume to the contrary that there is a linear relation 
        \begin{align}
            0&=\sum_{\alpha \in \N^{n -1} \times \Z} c_\alpha e_1^{\alpha_1} \cdots e_{n-1}^{\alpha_{n-1}}e_n^{\alpha_n}, \label{eq:elephant}
        \end{align}
        with the $c_\alpha$ not all zero. As the sum \cref{eq:elephant} is finite, there exists $m = \min\{\alpha_n \mid c_\alpha \neq 0\}$. 
        For this \(m\), we may multiply both sides of \cref{eq:elephant} by $e_n^{-m}$ to get
        \begin{align}
            0&=\sum_{\alpha' \in \N^{n}} c_{\alpha'} e_1^{\alpha'_1} \cdots e_{n-1}^{\alpha'_{n-1}}e_n^{\alpha_n'},
        \end{align}
        where now $\alpha' = (\alpha_1,\dots,\alpha_{n-1}, \alpha_{n} - m) \in \N^n$.
        By \cref{prop:ftsp},  \(e_1^{\alpha_1} \cdots e_{n - 1}^{\alpha_{n - 1}} e_n^{\alpha_n}\), for \((\alpha_1,\dots,\alpha_{n -1},\alpha_n) \in \N^{n}\), are linearly independent, and so $0=c_\alpha$ for every $c_\alpha$ in \cref{eq:elephant}.
        Thus, the only linear relation for \(\{e_1^{\alpha_1} \cdots e_{n - 1}^{\alpha_{n - 1}} e_n^{\alpha_n} \mid (\alpha_1,\dots,\alpha_{n -1},\alpha_n) \in \N^{n -1} \times \Z\}\) is the trivial relation, as desired.
\end{proof}

\begin{defin}
     Let $r, n \in \N$. The \emph{$r$-th complete homogeneous polynomial} in $n$ variables is 
     \begin{align}
         h_r(x_1, \dots, x_n)=\sum_{1\leq i_1 \leq \cdots \leq i_r\leq n} x_{i_1}\cdots x_{i_r}
     \end{align}
     for $r>0$. For $r=0$, 
     \begin{align}
         h_0 = 1.
     \end{align}
\end{defin}
Note that we allow for repeats in the indices $i_1, \dots, i_r$ in the definition of $h_r$, whereas every monomial in $e_r$ is square-free.
\begin{eg}
    For $r,n=2$, we have $h_2(x_1, x_2)=x_1^2+x_2^2+x_1x_2$. 
\end{eg}

\begin{lem}\label{lem:e_nh_n}
    For all $n,r \geq 1$, we have 
    \begin{equation}
        e_r(x_1,\dots,x_n)=
        \det \begin{pmatrix}
        h_1 & h_2 & \cdots & h_r\\
        1 & h_1 & \cdots  & h_{r -1}\\
        \vdots & \ddots & \ddots & \vdots\\
        0 & \cdots & 1 & h_1
        \end{pmatrix}
        ,\qquad h_r (x_1,\dots,x_n) = \det\begin{pmatrix}
        e_1 & e_2 & \cdots & e_r\\
        1 & e_1 & \cdots  & e_{r -1}\\
        \vdots & \ddots & \ddots & \vdots\\
        0 & \cdots & 1 & e_1
        \end{pmatrix}.
    \end{equation}
\end{lem}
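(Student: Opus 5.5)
The plan is to derive both determinant formulas from the generating-function identity relating the $e_r$ and $h_r$, and then to read that identity as a triangular linear system solved by Cramer's rule. Throughout, $e_r = e_r(x_1,\dots,x_n)$ and $h_r = h_r(x_1,\dots,x_n)$, with $e_0 = h_0 = 1$ and $e_r = 0$ for $r>n$.

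First, establish the identity of formal power series in $\Z[x_1,\dots,x_n][[t]]$
\begin{align*}
    E(t) := \sum_{r \geq 0} e_r t^r = \prod_{i=1}^n (1 + x_i t), \qquad H(t) := \sum_{r\geq 0} h_r t^r = \prod_{i=1}^n \frac{1}{1 - x_i t}.
\end{align*}
The first equality follows by expanding the product and collecting square-free monomials. The second follows by expanding each geometric series $\sum_{k} x_i^k t^k$ and observing that the coefficient of $t^r$ is the sum of all monomials of degree $r$. Together these give $H(t)E(-t) = 1$. Comparing coefficients of $t^m$ for $m \geq 1$ yields
\begin{align*}
    \sum_{j=0}^m (-1)^j e_j h_{m-j} = 0, \qquad m \geq 1.
\end{align*}

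Next, fix $r \geq 1$ and regard the relations for $m = 1,\dots,r$ as a linear system in the unknowns $u_j = (-1)^j e_j$, $j=1,\dots,r$. Each relation reads $\sum_{j=1}^{m} h_{m-j} u_j = -h_m$, so the coefficient matrix is lower unitriangular with $(m,j)$-entry $h_{m-j}$, and hence has determinant $1$. Cramer's rule then expresses $u_r$ as the determinant of this matrix with its last column replaced by $(-h_1,\dots,-h_r)^T$. I would then rearrange that determinant into the stated Toeplitz form in three steps: move the replaced column to the front, which costs a sign $(-1)^{r-1}$; pull out the $-1$ from that column; and transpose, or equivalently reverse the order of rows and columns. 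Reversing both row and column orders leaves a determinant unchanged, and this reversal sends the lower triangular pattern to the upper Hessenberg pattern shown, with $h_1,\dots,h_r$ in the first row and $1$'s on the subdiagonal. The signs $(-1)^r$ from $u_r = (-1)^r e_r$, the $-1$ pulled from the column, and $(-1)^{r-1}$ from the column move combine to $+1$, which gives the formula for $e_r$.

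The formula for $h_r$ follows by symmetry. The identity $H(t)E(-t)=1$ is equivalent to $E(t)H(-t) = 1$, obtained by substituting $-t$ for $t$. Hence the same relations hold with the roles of $e$ and $h$ exchanged, and the identical Cramer argument produces the second determinant.

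The main obstacle is the sign and permutation bookkeeping when converting the Cramer determinant into the exact displayed matrix. To control it I would either check $r=1,2$ explicitly, or, more cleanly, prove the identity directly by induction. Expanding the Hessenberg determinant $D_r$ along its first column gives the recursion $D_r = \sum_{j=1}^{r} (-1)^{j-1} h_j D_{r-j}$, where $D_0 = 1$. This is exactly the relation obtained by solving the coefficient identity above for $e_r$, so $D_r$ and $e_r$ satisfy the same recursion with the same initial value, and the induction on $r$ closes.
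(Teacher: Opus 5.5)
Your proof is correct, but it takes a different route from the paper. The paper proves the lemma in one line by specializing the general Jacobi--Trudi identities $s_\lambda=\det(h_{\lambda_i-i+j})$ and $s_\lambda=\det(e_{\lambda'_i-i+j})$ of \cite[Eq.~(3.4)--(3.5)]{Mac15} to $\lambda=(1^r)$ and $\lambda=(r)$, using $s_{(1^r)}=e_r$ and $s_{(r)}=h_r$. You instead derive exactly these two special cases directly from $H(t)E(-t)=1$. The relations $\sum_{j=0}^m(-1)^je_jh_{m-j}=0$ for $m=1,\dots,r$ form a unitriangular system, and Cramer's rule gives the determinant. Your bookkeeping checks out. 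After you move the replaced column to the front and remove the sign, the matrix has $(m,j)$-entry $h_{m-j+1}$, and its transpose is exactly the displayed matrix with entries $h_{1-i+j}$. The signs $(-1)^r\cdot(-1)\cdot(-1)^{r-1}$ do cancel.

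The paper's argument is shorter, but it rests on the full Jacobi--Trudi theorem. That theorem has to be applied in $\Lambda$ and then specialized, which is what covers $r>n$, where $(1^r)\notin\Par_n$. Your argument is self-contained, needs nothing beyond the two generating functions, and treats all $r,n\geq 1$ uniformly.

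One small correction to your fallback induction. In the displayed matrix the first row is $(h_1,\dots,h_r)$, so the recursion $D_r=\sum_{j=1}^r(-1)^{j-1}h_jD_{r-j}$ comes from expanding along the first \emph{row}. Deleting row $1$ and column $j$ leaves a block upper triangular matrix: one block is upper unitriangular of size $j-1$, and the other is the block computing $D_{r-j}$, so the minor equals $D_{r-j}$. Expanding along the first column $(h_1,1,0,\dots,0)^{T}$ gives only two terms. The second is a minor whose first row is $(h_2,\dots,h_r)$, which is not itself of the form $D_k$, so that expansion does not directly yield the recursion. With this fix, the induction is a clean alternative to the Cramer computation.
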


\begin{proof}
    This follows from the Jacobi-Trudi identities \cite[Eq.~(3.4)--(3.5)]{Mac15} if one sets $\lambda=(1^r)$ and $\lambda=(r, 0, 0, \dots, 0)$, respectively.
\end{proof}

\begin{cor}\label{cor:omega}
The ring morphism
\begin{align}
    \omega \colon \Z [T_1,\dots,T_n] &\longrightarrow \Z[T_1,\dots,T_n], \\
            T_i &\longmapsto     \det \begin{pmatrix}
        T_1 & T_2 & \cdots & T_i\\
        1 & T_1 & \cdots  & T_{i -1}\\
        \vdots & \ddots & \ddots & \vdots\\
        0 & \cdots & 1 & T_1
    \end{pmatrix}, \label{eq:omega}
\end{align}
satisfies \(\omega^2 := \omega \circ \omega = \mathrm{Id}\).
\end{cor}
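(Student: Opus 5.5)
We need to show $\omega\circ\omega=\mathrm{Id}$ on $\Z[T_1,\dots,T_n]$. Since $\omega$ is a ring morphism, it suffices to check $\omega(\omega(T_i))=T_i$ for each $i\in\{1,\dots,n\}$. The plan is to transport the question to $\Sym_n$ using \cref{prop:ftsp}(a) and \cref{lem:e_nh_n}, where it becomes the statement that the determinantal formulas for $e_r$ and $h_r$ are inverse to each other.

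First, let $D_i(T_1,\dots,T_i)$ denote the determinant in \cref{eq:omega}. Then $\omega(T_i)=D_i(T_1,\dots,T_i)$, and this depends only on $T_1,\dots,T_i$. Consider the evaluation morphism $\varepsilon\colon \Z[T_1,\dots,T_n]\to\Sym_n$, $T_i\mapsto e_i$, which is injective by \cref{prop:ftsp}(a). By \cref{lem:e_nh_n}, $\varepsilon(\omega(T_i))=D_i(e_1,\dots,e_i)=h_i$ for $1\le i\le n$. Hence $\varepsilon\circ\omega$ is the ring morphism $T_i\mapsto h_i$.

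Next, compute
\[
\varepsilon(\omega(\omega(T_i)))=\varepsilon\bigl(D_i(\omega(T_1),\dots,\omega(T_i))\bigr)=D_i(h_1,\dots,h_i).
\]
This uses that $\varepsilon\circ\omega$ is a ring morphism sending $T_j\mapsto h_j$, and that $D_i$ is a polynomial with integer coefficients. The first identity of \cref{lem:e_nh_n} gives $D_i(h_1,\dots,h_i)=e_i=\varepsilon(T_i)$. So $\varepsilon(\omega^2(T_i))=\varepsilon(T_i)$, and injectivity of $\varepsilon$ gives $\omega^2(T_i)=T_i$. Since both $\omega^2$ and $\mathrm{Id}$ are ring morphisms agreeing on the generators, $\omega^2=\mathrm{Id}$.

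The main point needing care is the step $\varepsilon\circ\omega\circ\omega(T_i)=D_i(h_1,\dots,h_i)$. It requires $i\le n$, so that only $h_1,\dots,h_n$ appear; this holds because $D_i$ involves only $T_1,\dots,T_i$ with $i\le n$. It also requires that \cref{lem:e_nh_n} be applied with exactly $n$ variables, which it is, since the lemma holds for all $n,r\ge1$.
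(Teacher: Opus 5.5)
Your proof is correct and is essentially the paper's argument: both transport $\omega$ to $\Sym_n$ via $T_i\mapsto e_i$ and use the two determinantal identities of \cref{lem:e_nh_n}. The one difference is that the paper writes $\omega=\varphi^{-1}\circ\psi=\psi^{-1}\circ\varphi$, which presupposes that $\psi\colon T_i\mapsto h_i$ is an isomorphism onto $\Sym_n$ (i.e.\ that $h_1,\dots,h_n$ are algebraically independent generators). You only need injectivity of $T_i\mapsto e_i$ from \cref{prop:ftsp}, so your version is slightly more economical, and it recovers the isomorphism property of $\psi=\varepsilon\circ\omega$ as a consequence.
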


\begin{proof}
    Consider the ring isomorphisms
    \begin{align*}
        \varphi \colon \Z[T_1,\dots,T_n] &\longrightarrow \Sym_n = \Z[e_1,\dots,e_n], \\
        T_i &\longmapsto e_i,
    \end{align*}
    and
    \begin{align*}
        \psi \colon \Z[T_1,\dots,T_n] &\longrightarrow \Sym_n = \Z [h_1,\dots,h_n], \\
        T_i &\longmapsto h_i.
    \end{align*}
    We see that
    \[
    (\varphi^{-1} \circ \psi) (T_i) = \varphi^{-1} (h_i) =\varphi^{-1} \left( \det\begin{pmatrix}
        e_1 & e_2 & \cdots & e_i\\
        1 & e_1 & \cdots  & e_{i -1}\\
        \vdots & \ddots & \ddots & \vdots\\
        0 & \cdots & 1 & e_1
        \end{pmatrix}
        \right) = \det \begin{pmatrix}
        T_1 & T_2 & \cdots & T_i\\
        1 & T_1 & \cdots  & T_{i -1}\\
        \vdots & \ddots & \ddots & \vdots\\
        0 & \cdots & 1 & T_1
    \end{pmatrix}.
    \]
    Thus, \(\omega = \varphi^{-1} \circ \psi\). Similarly, one can check that \(\omega = \psi^{-1} \circ \varphi\) and therefore \(\omega^2 = (\varphi^{-1} \circ \psi) \circ (\psi^{-1} \circ \varphi) = \mathrm{Id}\).
\end{proof}

\begin{prop}\label{prop:hgen}
    Let \(n \geq 1\).  Then the Laurent symmetric polynomials \(h_1,\dots,h_n,e_{-n}\) generate the ring of Laurent symmetric polynomials in \(n\) variables as a $\Z$-algebra. 
    Furthermore, if we let 
    \[
    \alpha(T_1,\dots,T_{n +1}) = T_{n+ 1}\det 
    \begin{pmatrix}
        T_1 & T_2 & \cdots & T_n\\
        1 & T_1 & \cdots  & T_{n -1}\\
        \vdots & \ddots & \ddots & \vdots\\
        0 & \cdots & 1 & T_1
    \end{pmatrix} - 1
    \]
     then the morphism $\chi$ described by:
    \begin{align*}
       \chi :\frac{\Z[T_1,\dots,T_{n + 1}]}{(\alpha (T_1,\dots,T_{n+ 1}))}&\longrightarrow \Z[h_1,\dots, h_n, e_{-n}] = \LSym_n, \\
        T_i &\longmapsto h_i \quad \text{for }i \in \{ 1,\dots,n\}, \\
        T_{n+1} &\longmapsto e_{-n},
    \end{align*}
    is well defined and an isomorphism.
\end{prop}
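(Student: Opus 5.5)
The plan is to reduce everything to \cref{theo:gene_n} by means of the involution $\omega$ of \cref{cor:omega}. Write $D_n(T_1,\dots,T_n)$ for the determinant appearing in $\alpha$, so that $D_n = \omega(T_n)$ and $\alpha = T_{n+1}\,\omega(T_n) - 1$. By \cref{lem:e_nh_n}, $D_n(h_1,\dots,h_n) = e_n$. Hence $\chi(\alpha) = e_{-n}e_n - 1 = 0$, so the map $\Z[T_1,\dots,T_{n+1}] \to \LSym_n$ sending $T_i \mapsto h_i$ and $T_{n+1}\mapsto e_{-n}$ factors through the quotient, and $\chi$ is well defined.

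For generation, I will use \cref{lem:e_nh_n} again: each $e_i$ with $1\le i\le n$ is a determinant in $h_1,\dots,h_i$, so $e_1,\dots,e_n \in \Z[h_1,\dots,h_n]$. Then \cref{theo:gene_n}\cref{theo3.3a} gives $\LSym_n = \Z[e_1,\dots,e_n,e_{-n}] \subseteq \Z[h_1,\dots,h_n,e_{-n}] \subseteq \LSym_n$. This proves the first claim, and it also shows that $\chi$ is surjective.

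For injectivity, I extend $\omega$ to an endomorphism $\tilde\omega$ of $\Z[T_1,\dots,T_{n+1}]$ by setting $\tilde\omega(T_{n+1}) = T_{n+1}$. Since $\omega^2 = \mathrm{Id}$ by \cref{cor:omega}, $\tilde\omega$ is an automorphism. It sends $T_nT_{n+1}-1$ to $\omega(T_n)T_{n+1}-1 = \alpha$, so it maps the ideal $(T_nT_{n+1}-1)$ onto $(\alpha)$ and induces an isomorphism
\[
\bar\omega \colon \frac{\Z[T_1,\dots,T_{n+1}]}{(T_nT_{n+1}-1)} \longrightarrow \frac{\Z[T_1,\dots,T_{n+1}]}{(\alpha)}.
\]
Next I check that $\chi\circ\bar\omega = \tilde\varphi$, the isomorphism of \cref{theo:gene_n}\cref{theo3.3b}, by comparing them on generators. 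For $i\le n$ we have $\chi(\bar\omega(T_i)) = \omega(T_i)(h_1,\dots,h_i) = e_i$ by \cref{lem:e_nh_n}, and $T_{n+1}\mapsto e_{-n}$ under both maps. Therefore $\chi = \tilde\varphi\circ\bar\omega^{-1}$ is a composition of isomorphisms.

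The only delicate point is the ideal bookkeeping: I need to confirm that an automorphism carrying one generator to the other carries the principal ideal onto the principal ideal, which is immediate. I also need to be careful about which form of \cref{lem:e_nh_n} is applied, since the required substitution $\omega(T_i)(h) = e_i$ is the first identity there.
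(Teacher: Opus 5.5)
Your proof is correct and follows essentially the same route as the paper: extend $\omega$ to $\Z[T_1,\dots,T_{n+1}]$ fixing $T_{n+1}$, use $\omega^2=\mathrm{Id}$ to get an automorphism carrying $T_nT_{n+1}-1$ to $\alpha$, and write $\chi$ as $\tilde\varphi$ composed with the induced isomorphism of quotients. The differences are cosmetic: you get generation from \cref{theo:gene_n} via $e_i\in\Z[h_1,\dots,h_i]$ instead of repeating the $e_n^d$ argument with $\Sym_n=\Z[h_1,\dots,h_n]$, and you check well-definedness of $\chi$ directly rather than defining $\chi$ as the composite.
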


\begin{proof}
As in the proof of \cref{theo:gene_n}, we can multiply any Laurent symmetric polynomial by a large enough power of \(e_n\) to get a symmetric polynomial. Then, since \(\mathrm{Sym}_n = \Z[h_1,\dots,h_n]\) we can deduce that \(\mathrm{LSym}_n = \Z [h_1,\dots,h_n,e_{-n}]\).

For defining the \(\chi\) map, consider the ring morphism
\begin{align*}
    \tau \colon \Z[T_1,\dots,T_n, T_{n + 1}] &\longrightarrow \Z[T_1,\dots,T_n,T_{n + 1}],\\
    T_i &\longmapsto \det      \begin{pmatrix}
        T_1 & T_2 & \cdots & T_i\\
        1 & T_1 & \cdots  & T_{i -1}\\
        \vdots & \ddots & \ddots & \vdots\\
        0 & \cdots & 1 & T_1
    \end{pmatrix}, \qquad 1 \leq i \leq n, \\
    T_{n + 1} &\longmapsto T_{n + 1}.
\end{align*}
Since $\tau$ extends \(\omega\) (as defined in \cref{cor:omega}) to another variable \(T_{n + 1}\) and \(\tau (T_{n + 1}) = T_{n + 1}\), by \cref{cor:omega}, it is easily checked that \(\tau^2 = \mathrm{Id}\), so $\tau$ is an isomorphism. Moreover, since \(\tau (T_n T_{n + 1} - 1) = \alpha (T_1,\dots,T_{n + 1}) \), we have the commutative diagram
\[
\begin{tikzcd}[row sep=large, column sep=large]
\mathbb{Z}[T_1, \dots, T_{n+1}] \arrow[r, "\tau"] \arrow[d] & \mathbb{Z}[T_1, \dots, T_{n+1}] \arrow[d] \\
\dfrac{\mathbb{Z}[T_1, \dots, T_{n+1}]}{(\alpha(T_1, \dots, T_{n+1}))} \arrow[r, "\tilde{\tau}"] & \dfrac{\mathbb{Z}[T_1, \dots, T_{n+1}]}{(T_n T_{n+1} - 1)}
\end{tikzcd}
\]
where \(\tilde{\tau}\) is again an isomorphism. Recall the map \(\varphi \) from \cref{theo:gene_n}. Define \(\chi : = \varphi \circ \tilde{\tau} \colon \Z[T_1,\dots,T_{n + 1}]/(\alpha(T_1,\dots,T_{n + 1})) \rightarrow \LSym_n\). Then \(\chi\) is an isomorphism (since it is a composition of two isomorphisms), given by 
\[
\chi (T_i) = \det\begin{pmatrix}
        e_1 & e_2 & \cdots & e_i\\
        1 & e_1 & \cdots  & e_{i -1}\\
        \vdots & \ddots & \ddots & \vdots\\
        0 & \cdots & 1 & e_1
        \end{pmatrix} = h_i, \qquad 1 \leq i \leq n,
\]
and \(\chi(T_{n + 1}) = e_{-n}\), as desired.
\end{proof}

\begin{defin}
Let $r,n \in \N_{\geq 1}$. The \emph{$r$-th power sum} in $n$ variables is 
\begin{align}
    p_r(x_1, \dots, x_n)=\sum_{i=1}^n x_i^r. 
\end{align}
\end{defin}

\begin{eg} For $n=2$, we have $p_1(x_1, x_2)=x_1+x_2$, $p_2(x_1, x_2)=x_1^2+x_2^2$. 
\end{eg}
Note from the above example that $h_2(x_1, x_2)=\frac{1}{2}p_1(x_1, x_2)^2+\frac{1}{2}p_2(x_1, x_2)$. This motivates us to work over the rational numbers instead of the integers.
\begin{defin}
    The \emph{ring of Laurent polynomials in \(n\) variables with rational coefficients} is 
    \[
    \LPol_{n,\Q} = \Q [x_1^{\pm 1},\dots, x_n^{\pm 1}] \cong \Q \otimes_\Z \LPol_{n}.
    \]
    In analogy to \cref{sec:def}, we define an action of \(\mathfrak{S}_n\) and denote the invariant elements as \(\LSym_{n,\Q}\). There is a canonical isomorphism, \(\LSym_{n,\Q} \cong \Q \otimes_\Z \LSym_n\).
\end{defin}

Our goal is to prove that the \(p_1,\dots,p_n,e_{-n}\) are a set of generators of \(\LSym_{n,\Q}\) and find the relations between them. First, we need some preliminary results.

\begin{lem} \label{lem:endetp}
    For all $n,m \geq 1$, we have
    \begin{gather}
    e_n(x_1,\dots,x_m) = \frac{1}{n!}\det\begin{pmatrix}
        p_1 & 1 & 0 & 0 & \dots & 0\\
        p_2 & p_1 & 2  & 0 & \dots & 0\\
        \vdots & \vdots & \ddots & \ddots & \ddots & \vdots\\
        \vdots & \vdots &  & \ddots & \ddots & 0 \\
        p_{n-1} & p_{n -2} & \dots & \dots & p_1 & n-1 \\
        p_n & p_{n-1} & \dots & \dots & p_2 & p_1 \\
    \end{pmatrix}, \label{eq:er-pr}
    \\
    p_n (x_1,\dots,x_m)= \det\begin{pmatrix}
        e_1 & 1 & 0& 0 & \dots &0\\2e_2 & e_1 & 1 & 0& \dots & 0\\
        \vdots & \vdots & \ddots & \ddots &\ddots & \vdots \\
        \vdots &  \vdots &&\ddots & 1 & 0\\
        \vdots & \vdots &&&e_1 & 1\\
        ne_n & e_{n-1} & e_{n-2} &\dots &\dots& e_1\\
    \end{pmatrix}. \label{eq:pr-er}
    \end{gather}
\end{lem}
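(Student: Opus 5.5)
Both determinant formulas come from Newton's identities, read as a triangular linear system and solved by Cramer's rule. The first step is to establish Newton's identities for every $k \geq 1$, in $m$ variables:
\begin{equation*}
    k e_k = \sum_{i=1}^{k} (-1)^{i-1} e_{k-i}\, p_i ,
\end{equation*}
where $e_j = e_j(x_1,\dots,x_m)$ and $e_j = 0$ for $j > m$. To prove this, use the generating functions $E(t) = \prod_{j=1}^m (1 + x_j t) = \sum_{k \geq 0} e_k t^k$ and $P(t) = \sum_{r \geq 1} p_r t^{r-1}$. Taking the logarithmic derivative gives
\begin{equation*}
    \frac{E'(t)}{E(t)} = \sum_{j=1}^m \frac{x_j}{1 + x_j t} = P(-t),
\end{equation*}
so $E'(t) = E(t) P(-t)$. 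Comparing coefficients of $t^{k-1}$ gives the identity. The argument is purely formal, so it holds in $\Z[x_1,\dots,x_m][[t]]$. Alternatively, one may cite \cite[Ch.~I, \S2, Eq.~(2.11')]{Mac15} directly.

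For \cref{eq:er-pr}, regard the identities for $k = 1,\dots,n$ as a linear system in the unknowns $e_1,\dots,e_n$, with $e_0 = 1$ moved to the right-hand side. After multiplying suitable equations by signs, the coefficient matrix is lower triangular with diagonal entries $1, 2, \dots, n$. Its determinant is therefore $n!$, and the right-hand side is $(p_1,\dots,p_n)$ up to signs. By Cramer's rule, $e_n$ equals $\frac{1}{n!}$ times the determinant obtained by replacing the last column with the right-hand side.

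Rather than track signs through Cramer's rule directly, I would do the following:
\begin{enumerate}
    \item Expand the matrix $M_n$ in \cref{eq:er-pr} along its last column. Its entries are $n-1$ in row $n-1$ and $p_1$ in row $n$.
    \item Show this yields the recursion
    \begin{equation*}
        \det M_n = \sum_{i=1}^n (-1)^{i-1} \frac{(n-1)!}{(n-i)!}\, p_i \det M_{n-i},
    \end{equation*}
    where the falling factorial comes from the superdiagonal entries $n-1, n-2, \dots$ picked up in the minors.
    \item Set $D_n = \det M_n / n!$ and check that the recursion becomes $n D_n = \sum_i (-1)^{i-1} p_i D_{n-i}$, with $D_0 = 1$.
    \item Since this is exactly Newton's recursion, induction gives $D_n = e_n$.
\end{enumerate}

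For \cref{eq:pr-er}, expand the matrix $N_n$ along its first column. Its entries are $e_1, 2e_2, \dots, n e_n$. Show that the minor for row $i$ is lower triangular with ones on the superdiagonal, up to a block structure, and reduces to a matrix of the same shape. The result is
\begin{equation*}
    \det N_n = \sum_{i=1}^{n-1} (-1)^{i-1} e_i \det N_{n-i} + (-1)^{n-1} n e_n.
\end{equation*}
Newton's identity rearranged for $p_n$ reads
\begin{equation*}
    p_n = \sum_{i=1}^{n-1} (-1)^{i-1} e_i\, p_{n-i} + (-1)^{n-1} n e_n,
\end{equation*}
so induction on $n$ gives $\det N_n = p_n$. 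As an alternative, one can note that the first column of the determinant, combined with cofactor expansion along the first column, directly reproduces this relation.

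The main obstacle is the bookkeeping in the cofactor expansions: getting the signs $(-1)^{i-1}$ and the factorial weights $(n-1)!/(n-i)!$ right in the first determinant. I would first verify the recursions for $n = 2, 3$ before writing the general induction. The case $n > m$ needs no separate treatment, because Newton's identities hold with $e_k = 0$ for $k > m$.
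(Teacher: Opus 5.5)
\textbf{Verdict.} Your overall route is sound, and the part for the first determinant is correct. The second determinant has a concrete error in how you derive the recursion, though the recursion itself is right and is easy to obtain correctly.

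\textbf{The first determinant.} Newton's identities followed by a cofactor recursion for the Hessenberg determinant and induction is the classical way to get both formulas; the paper only cites Littlewood for them. Your recursion $\det M_n=\sum_{i}(-1)^{i-1}\frac{(n-1)!}{(n-i)!}p_i\det M_{n-i}$ is correct, and so is the normalisation $D_n=\det M_n/n!$. One caution about your derivation: a single expansion along the last column only gives $\det M_n=p_1\det M_{n-1}-(n-1)\det M'$, where $M'$ has last row $(p_n,\dots,p_2)$, so you would have to iterate. Expanding along the last row gives the recursion in one step. Deleting row $n$ and column $j$ leaves a block lower triangular matrix with blocks $M_{j-1}$ and a lower triangular block with diagonal $j,\dots,n-1$.

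\textbf{The gap in the second determinant.} Expanding $N_n$ along its first column cannot yield the recursion you write down. The coefficients of that expansion are the first-column entries $ie_i$, whereas your recursion has coefficients $e_1,\dots,e_{n-1}$ and $ne_n$.

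Concretely, deleting row $i$ and column $1$ leaves a block lower triangular matrix. Its upper-left $(i-1)\times(i-1)$ block is unit lower triangular. Its lower-right block is $(e_{r-s+1})_{1\le r,s\le n-i}$, with $e_0=1$ and $e_j=0$ for $j<0$. That block is not $N_{n-i}$, because the weights $1,2,\dots$ of the first column are gone. By the transpose of \cref{lem:e_nh_n}, its determinant is $h_{n-i}$, not $p_{n-i}$. So the first-column expansion gives the true identity $\det N_n=\sum_{i=1}^{n}(-1)^{i-1}ie_ih_{n-i}$, which is not Newton's recursion. Your ``alternative'' remark fails for the same reason.

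\textbf{How to repair it.} Treat $N_n$ the way $M_n$ is best treated: expand along the last row $(ne_n,e_{n-1},\dots,e_1)$. Deleting row $n$ and column $j$ leaves blocks $N_{j-1}$ and a lower triangular block whose diagonal consists of the superdiagonal $1$'s. Hence
\[
\det N_n=(-1)^{n-1}ne_n+\sum_{j=2}^{n}(-1)^{n-j}e_{n-j+1}\det N_{j-1},
\]
which is exactly your recursion with $i=n-j+1$. Your rearranged Newton identity and the induction then finish the proof.

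Alternatively, you could keep the first-column expansion. You would then need $\det(e_{r-s+1})_{r,s\le k}=h_k$ together with the separate identity $p_n=\sum_i(-1)^{i-1}ie_ih_{n-i}$. The latter follows from $P(-t)=E'(t)/E(t)=E'(t)H(-t)$, where $H(t)=\sum_{r\ge0}h_rt^r$.
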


\begin{proof}
    The identity \cref{eq:er-pr} is derived in \cite[(6.2; 6), (6.2; 7)]{Lit06} (with $S_r$ being $p_r$ and $a_r$ being $e_r$). The case for \cref{eq:pr-er} is similar.
\end{proof}

\begin{prop}\label{prop:p_ngen}
    Let \(n \geq 1\).  Then the Laurent symmetric polynomials \(p_1,\dots,p_n,e_{-n}\) generate the ring \(\LSym_{n,\Q}:=(\LPol_{n, \Q})^{\mathfrak{S}_n}\). Furthermore, if we let 
    \[
    \beta(T_1,\dots,T_{n +1}) = \frac{1}{n!}T_{n+ 1}\det 
    \begin{pmatrix}
        T_1 & 1 & 0 & 0 & \dots & 0\\
        T_2 & T_1 & 2  & 0 & \dots & 0\\
        \vdots & \vdots & \ddots & \ddots & \ddots & \vdots\\
        \vdots & \vdots &  & \ddots & \ddots & 0 \\
        T_{n-1} & T_{n -2} & \dots & \dots & T_1 & n-1 \\
        T_n & T_{n - 1} & \dots & \dots & T_2 & T_1 \\
    \end{pmatrix} - 1,
    \]
    then the morphism $\phi$ described by
    \begin{align*}
     \phi \colon \frac{\Q[T_1,\dots,T_{n + 1}]}{(\beta (T_1,\dots,T_{n+ 1}))}&\longrightarrow \Q[p_1,\dots, p_n, e_{-n}] = \LSym_{n,\Q}, \\
        T_i &\longmapsto p_i \quad \text{for }i \in \{ 1,\dots,n\}, \\
        T_{n+1} &\longmapsto e_{-n},
    \end{align*}
    is well defined and an isomorphism.
\end{prop}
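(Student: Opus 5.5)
We follow the proof of \cref{prop:hgen}, with the elementary–power sum transition of \cref{lem:endetp} over $\Q$ in place of the involution $\omega$.

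\emph{Generation.} As in \cref{theo:gene_n}, any $\ell\in\LSym_{n,\Q}$ satisfies $e_n^d\ell\in\Sym_{n,\Q}$ for $d\gg0$. By \cref{prop:ftsp} tensored with $\Q$, $\Sym_{n,\Q}=\Q[e_1,\dots,e_n]$. The triangular Newton-type formulas \cref{eq:er-pr} and \cref{eq:pr-er} show that each $e_r$ ($r\le n$) is a polynomial in $p_1,\dots,p_r$ and conversely, so $\Sym_{n,\Q}=\Q[p_1,\dots,p_n]$. Hence $\LSym_{n,\Q}=\Q[p_1,\dots,p_n,e_{-n}]$. In particular $e_n=\frac1{n!}\det(\cdots)$ evaluated at the $p_i$, so $\beta(p_1,\dots,p_n,e_{-n})=e_{-n}e_n-1=0$, and $\phi$ is well defined and surjective.

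\emph{Injectivity.} Define ring morphisms $\sigma,\rho\colon\Q[T_1,\dots,T_{n+1}]\to\Q[T_1,\dots,T_{n+1}]$ by
\begin{align*}
\sigma(T_i)&=P_i(T_1,\dots,T_i), & \sigma(T_{n+1})&=T_{n+1},\\
\rho(T_i)&=E_i(T_1,\dots,T_i), & \rho(T_{n+1})&=T_{n+1},
\end{align*}
where $P_i$ is the determinant polynomial of \cref{eq:pr-er} (expressing $p_i$ in terms of the $e_j$) and $E_i$ is that of \cref{eq:er-pr} (expressing $e_i$ in terms of the $p_j$).

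We claim that $\sigma$ and $\rho$ are mutually inverse. It suffices to check on each $T_i$, $i\le n$. Let $\varphi_e,\varphi_p\colon\Q[T_1,\dots,T_n]\to\Sym_{n,\Q}$ send $T_i\mapsto e_i$ and $T_i\mapsto p_i$ respectively. Then:
\begin{enumerate}
\item $\varphi_e$ is an isomorphism by \cref{prop:ftsp}.
\item $\varphi_p$ is surjective by the generation step.
\item $\varphi_p$ is injective: the $p_1,\dots,p_n$ are algebraically independent over $\Q$, since $n$ generators of a polynomial ring of transcendence degree $n$ cannot satisfy a relation. Alternatively, this follows from the Jacobian criterion, or from a graded dimension count comparing monomials $p_\lambda$ and $e_\lambda$ of each degree.
\end{enumerate}
Since $\varphi_e\circ\sigma=\varphi_p$ and $\varphi_p\circ\rho=\varphi_e$ on the first $n$ variables, we get $\sigma=\varphi_e^{-1}\varphi_p$ and $\rho=\varphi_p^{-1}\varphi_e$. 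Thus $\rho\sigma=\mathrm{Id}$ and $\sigma\rho=\mathrm{Id}$, exactly as in \cref{cor:omega}.

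Now $\rho(T_nT_{n+1}-1)=\beta$. Hence $\rho$ induces an isomorphism
\[
\tilde\rho\colon\Q[T]/(T_nT_{n+1}-1)\longrightarrow\Q[T]/(\beta).
\]
Composing $\tilde\rho^{-1}$ with the rational version of the isomorphism $\tilde\varphi$ of \cref{theo:gene_n} gives an isomorphism $\Q[T]/(\beta)\to\LSym_{n,\Q}$. The rational version of $\tilde\varphi$ is obtained by tensoring with $\Q$, which is flat, using $\LSym_{n,\Q}\cong\Q\otimes\LSym_n$. This composite sends $T_i\mapsto\sigma(T_i)(e)=P_i(e_1,\dots,e_i)=p_i$ and $T_{n+1}\mapsto e_{-n}$, so it equals $\phi$.

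The main obstacle is justifying the algebraic independence of $p_1,\dots,p_n$, i.e.\ that $\sigma$ and $\rho$ are inverse. The cleanest route is to avoid it altogether by verifying $\rho\circ\sigma=\mathrm{Id}$ formally. The polynomial identities \cref{eq:er-pr} and \cref{eq:pr-er} hold for $m\ge n$ variables, where $e_1,\dots,e_n$ are algebraically independent. Therefore substituting $P_j(e)$ into $E_i$ returns $e_i$ identically, which forces $E_i(P_1(T),\dots,P_i(T))=T_i$ in $\Q[T]$. This gives $\sigma\circ\rho=\mathrm{Id}$ directly. Then $\sigma$ is surjective, hence bijective, because a surjective endomorphism of a Noetherian ring is injective, so $\rho=\sigma^{-1}$.
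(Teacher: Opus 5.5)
Your proposal is correct and follows the paper's proof essentially step for step. Your $\rho$ is the paper's $\sigma^{-1}$, it carries $(T_nT_{n+1}-1)$ onto $(\beta)$, and your composite $\tilde\varphi_{\Q}\circ\tilde\rho^{-1}$ is exactly the paper's $\phi=\varphi_\Q\circ\tilde\sigma$. The one addition is that you justify the algebraic independence of $p_1,\dots,p_n$ (equivalently, that $T_i\mapsto p_i$ is an isomorphism onto $\Sym_{n,\Q}$), which the paper simply asserts when it calls $\eta$ an isomorphism; both of your arguments for this (the transcendence-degree count, and $\sigma\circ\rho=\mathrm{Id}$ combined with the fact that a surjective endomorphism of a Noetherian ring is injective) are valid.
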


\begin{proof}
    The fact that \(p_1,\dots,p_n,e_{-n}\) generate \(\LSym_{n,\Q}\) follows from \(\Sym_{n,\Q} = \Q[p_1,\dots,p_n]\) as in the proofs of \cref{theo:gene_n} and \cref{prop:hgen}; we omit the details. For defining \(\phi\), following the proof strategy of \cref{prop:hgen}, we define a ring homomorphism
    \begin{align*}
        \sigma \colon \Q [T_1,\dots,T_n] &\longrightarrow \Q[T_1,\dots,T_n], \\
        T_i &\longmapsto \det\begin{pmatrix}
        T_1 & 1 & 0& 0 & \dots &0\\
        2T_2 & T_1 & 1 & 0& \dots & 0\\
        \vdots & \vdots & \ddots & \ddots &\ddots & \vdots \\
        \vdots &  \vdots &&\ddots & 1 & 0\\
        \vdots & \vdots &&&T_1 & 1\\
        iT_i & T_{i-1} & T_{i-2} &\dots &\dots& T_1\\
    \end{pmatrix},
    \end{align*}
    Consider the isomorphisms
    \begin{align*}
        \eta \colon \Q[T_1,\dots,T_n] &\longrightarrow \Sym_{n,\Q},\\
        T_i  &\longmapsto p_i,
    \end{align*}
    and
    \begin{align*}
        \mu \colon \Q[T_1,\dots,T_n] &\longrightarrow \Sym_{n,\Q},\\
        T_i  &\longmapsto e_i.
    \end{align*}
    We see from \cref{lem:endetp} that \(\sigma = \mu^{-1} \circ \eta\), and so \(\sigma^{-1} = \eta^{-1} \circ \mu\), where we use the relation between $e_i$ and $p_j$ ($j\leq i$) \cref{eq:er-pr}. As such,
    \begin{align*}
        \sigma^{-1} \colon \Q [T_1,\dots,T_n] &\longrightarrow \Q[T_1,\dots,T_n],\\
        T_i &\longmapsto \frac{1}{i!}\det\begin{pmatrix}
        T_1 & 1 & 0 & 0 & \dots & 0\\
        T_2 & T_1 & 2  & 0 & \dots & 0\\
        \vdots & \vdots & \ddots & \ddots & \ddots & \vdots\\
        \vdots & \vdots &  & \ddots & \ddots & 0 \\
        T_{i-1} & T_{i -2} & \dots & \dots & T_1 & i-1 \\
        T_i & T_{i - 1} & \dots & \dots & T_2 & T_1 \\
    \end{pmatrix}. 
    \end{align*}

    If we extend it to \(\Q[T_1,\dots,T_{n + 1}]\) leaving \(T_{n + 1}\) fixed,
     \begin{align*}
        \sigma \colon \Q [T_1,\dots,T_{n + 1}] &\longrightarrow \Q[T_1,\dots,T_{n + 1}],\\
        T_i &\longmapsto \det\begin{pmatrix}
        T_1 & 1 & 0& 0 & \dots &0\\
        2T_2 & T_1 & 1 & 0& \dots & 0\\
        \vdots & \vdots & \ddots & \ddots &\ddots & \vdots \\
        \vdots &  \vdots &&\ddots & 1 & 0\\
        \vdots & \vdots &&&T_1 & 1\\
        iT_i & T_{i-1} & T_{i-2} &\dots &\dots& T_1\\
    \end{pmatrix}, \quad 1 \leq i \leq n,\\
    T_{n + 1} &\longmapsto T_{n + 1},
    \end{align*}
    then the inverse is given by
    \begin{align*}
        \sigma^{-1} \colon \Q [T_1,\dots,T_{n + 1}] &\longrightarrow \Q[T_1,\dots,T_{n + 1}],\\
        T_i &\longmapsto \frac{1}{i!}\det\begin{pmatrix}
        T_1 & 1 & 0 & 0 & \dots & 0\\
        T_2 & T_1 & 2  & 0 & \dots & 0\\
        \vdots & \vdots & \ddots & \ddots & \ddots & \vdots\\
        \vdots & \vdots &  & \ddots & \ddots & 0 \\
        T_{i-1} & T_{i -2} & \dots & \dots & T_1 & i-1 \\
        T_i & T_{i - 1} & \dots & \dots & T_2 & T_1 \\
    \end{pmatrix}, \quad 1\leq i \leq n, \\
    T_{n + 1} &\longmapsto T_{n + 1}.
    \end{align*}
    Since \(\sigma^{-1} (T_n T_{n + 1} - 1) = \beta (T_1,\dots,T_{n + 1})\), we have a commutative diagram
    \[
    \begin{tikzcd}[row sep=large, column sep=large]
    \mathbb{Q}[T_1, \dots, T_{n+1}] \arrow[r, "\sigma"] \arrow[d] & \mathbb{Q}[T_1, \dots, T_{n+1}] \arrow[d] \\
    \dfrac{\mathbb{Q}[T_1, \dots, T_{n+1}]}{(\beta(T_1, \dots, T_{n+1}))} \arrow[r, "\tilde{\sigma}"] & \dfrac{\mathbb{Q}[T_1, \dots, T_{n+1}]}{(T_n T_{n+1} - 1)}
    \end{tikzcd}
    \]
    where \(\tilde{\sigma}\) is again an isomorphism. Let \(\varphi_\Q\) be the scalar extension of the ring isomorphism $\varphi$ from \cref{theo:gene_n} (hence $\varphi_\Q$ is itself an isomorphism), 
    \begin{align*}
        \varphi_\Q \colon \frac{\Q[T_1,\dots,T_n,T_{n + 1}]}{(T_n T_{n + 1} - 1)} &\longrightarrow \LSym_{n, \Q}, \\
        T_i &\longmapsto e_i \quad \text{for }i \in \{ 1,\dots,n\}, \\
        T_{n+1} &\longmapsto e_{-n}.
    \end{align*}
    Then \(\phi = \varphi_\Q \circ \tilde{\sigma}\) is an isomorphism.
\end{proof}

\section{Laurent Schur polynomials} \label{sec:lscp}
\begin{defin}\label{def:LAlt_n}
We define the $\Z$-module of all \emph{Laurent alternating polynomials} or \emph{Laurent skew-symmetric polynomials}  in $n$ variables to be
\[
\LAlt_n := \{ f \in \LPol_n\,|\, \pi(f) = \sgn(\pi)f \text{ for all } \pi \in \mathfrak{S}_n\},
\]
where \(\sgn(\pi) = \pm 1\) denotes the sign of the permutation \(\pi\). Similarly, let $\Alt_n := \{ f \in \Pol_n\,|\, \pi(f) = \sgn(\pi)f \text{ for all } \pi \in \mathfrak{S}_n\}$ in the non-Laurent case.
\end{defin}

We construct alternating polynomials by antisymmetrizing monomials. Let $\alpha \in \SComp_n$ and recall that $x^\alpha = x_1^{\alpha_1}\cdots x_n^{\alpha_n}$. We define
\begin{equation}\label{eq:aalpha}
      a_\alpha = a_\alpha(x_1,\dots, x_n) := \sum_{\pi \in\mathfrak{S}_n} \sgn(\pi) x^{(\alpha^\pi)},
\end{equation}
where $\alpha^\pi$ is defined by \cref{eq:alpha-pi}.  Then $a_\alpha$ is an alternating polynomial.

\begin{rem} \label{rem:1aalpha}
    Note that, by construction, if $\alpha$ has two equal entries, then $a_\alpha = 0$. This implies that there are no repeated terms in the sum \cref{eq:aalpha}, so for any $\alpha \in \SComp_n$, with $a_\alpha \neq 0$, there exists a unique $\pi \in \mathfrak{S}_n$ such that $\alpha^\pi \in \SPar_n$. Then $a_\alpha = \sgn(\pi)a_{\alpha^\pi}$, which motivates us to restrict our attention to $a_\lambda$, where $\lambda \in \SPar_n$.
\end{rem}
    Note that $\LAlt_n$ is not a subring of $\LPol_n$. However, if $f , g  \in \LAlt_n$, then
\[
\pi(fg) = (\pi(f))(\pi(g))=\sgn(\pi)f\sgn(\pi)g = fg \text{ for all } \pi \in \mathfrak{S}_n,
\]
and so $fg \in \LSym_n$. Moreover, if $f \in \LSym_n$ and $g \in \LAlt_n$, then
\[
\pi(fg) = \pi(f) \pi(g)=f\sgn(\pi)g = \sgn(\pi)fg \text{ for all } \pi \in \mathfrak{S}_n,
\]
and so $fg \in \LAlt_n$. It follows that $\LAlt_n$ is an $\LSym_n$-module, as it is closed under multiplication by any element of $\LSym_n$.

Let $\delta^n:=(n-1, n-2, \dots, 0) \in \SPar_n$. 

\begin{lem}\label{lem:lamdel}
    We have a bijection of sets 
\begin{align*}
        \varphi\colon \SPar_n &\longrightarrow \{ \lambda \in \SPar_n \mid \lambda_1 > \cdots > \lambda_n \} =:M. \\
        \lambda & \longmapsto \lambda + \delta^n.
\end{align*}
\end{lem}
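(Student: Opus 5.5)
We check three things in turn: that \(\varphi\) is well defined (its image lies in \(M\)), that it is injective, and that it is surjective. Throughout we regard elements of \(\SPar_n\) as weakly decreasing \(n\)-tuples of integers \((\lambda_1,\dots,\lambda_n)\), as in \cref{def:SPar}. In this description every weakly decreasing \(n\)-tuple of integers arises from a unique signed partition of length at most \(n\): the positive entries give \(\mu\), and the negatives of the negative entries give \(\nu\). Addition is the entrywise addition of \(n\)-tuples from \cref{def:scomp}.

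For well-definedness, take \(\lambda\in\SPar_n\) and set \(\kappa=\lambda+\delta^n\), so that \(\kappa_i=\lambda_i+n-i\). For \(1\le i<n\),
\[
\kappa_i-\kappa_{i+1}=(\lambda_i-\lambda_{i+1})+1\ge 1 .
\]
Hence \(\kappa_1>\cdots>\kappa_n\). In particular \(\kappa\) is weakly decreasing, so it is the \(n\)-tuple of a signed partition in \(\SPar_n\), and therefore \(\kappa\in M\). Injectivity is immediate: if \(\lambda+\delta^n=\lambda'+\delta^n\) as \(n\)-tuples, then \(\lambda=\lambda'\) entrywise.

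For surjectivity, take \(\kappa\in M\) and define \(\lambda=\kappa-\delta^n\), that is, \(\lambda_i=\kappa_i-(n-i)\). Since the \(\kappa_i\) are integers with \(\kappa_i>\kappa_{i+1}\), we have \(\kappa_i-\kappa_{i+1}\ge 1\). Therefore
\[
\lambda_i-\lambda_{i+1}=\kappa_i-\kappa_{i+1}-1\ge 0,
\]
so \(\lambda\) is a weakly decreasing \(n\)-tuple of integers, hence an element of \(\SPar_n\), and \(\varphi(\lambda)=\kappa\).

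There is no real obstacle in this argument. The one point needing care is the identification of \(\SPar_n\) with weakly decreasing integer \(n\)-tuples, which is what makes entrywise sums and differences such as \(\kappa-\delta^n\) land back in \(\SPar_n\). This identification should be stated explicitly at the outset, using the convention in \cref{def:SPar} and \cref{rem:monkey}. The surjectivity step also uses integrality in an essential way: a strict inequality \(\kappa_i>\kappa_{i+1}\) between integers gives a gap of at least \(1\).
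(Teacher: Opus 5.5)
Your proposal is correct and takes essentially the same route as the paper. The paper checks well-definedness and then exhibits the inverse $\gamma \mapsto \gamma - \delta^n$, verifying that it lands in $\SPar_n$; this is exactly your surjectivity step combined with your entrywise injectivity. You also make explicit that integrality turns each strict inequality into a gap of at least one, which the paper uses only implicitly.
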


\begin{proof}
    We show that the map is well defined, then define an inverse and prove that it maps into $\SPar_n$ proving bijectivity.

    For $\lambda \in \SPar_n$ it holds that $\lambda_1 \geq \ldots \geq \lambda_n$ and thus $\lambda + \delta^n \in M$. 
    For $\gamma \in M$, we define $\psi(\gamma) = \gamma - \delta^n $. This defines a map from $M $ to $\SPar_n$ since for $\gamma \in M $, $\gamma_1 > \ldots > \gamma_n$ and thus, $\gamma_1 - (n-1) \geq \gamma_{2} - (n-2) \geq \ldots \geq \gamma_{n-1} - 1 \geq \gamma_n - 0  $,
    so we have $\gamma - \delta^n \in \SPar_n$. 
    As $\varphi$ and $\psi $ are clearly inverses to each other, bijectivity follows.
\end{proof}
\begin{lem}\label{lem:altdet}
    For $\alpha \in \SComp_n$, it holds that 
    \begin{equation}\label{eq:avocado}
        a_{\alpha} = \det(x_i^{\alpha_j})_{1\leq i,j\leq n} = \det \begin{pmatrix}
            x_1^{\alpha_1} & x_1^{\alpha_2} & \dots &x_1^{\alpha_n}\\x_2^{\alpha_1} & x_2^{\alpha_2} & \dots & x_2^{\alpha_n}\\
            \vdots &\vdots & \ddots & \vdots\\
            x_n^{\alpha_1} & x_n^{\alpha_2} & \dots & x_n^{\alpha_n}
        \end{pmatrix}.   \end{equation} 
\end{lem}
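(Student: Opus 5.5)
The plan is to expand the determinant with the Leibniz formula and compare it term by term with the definition \cref{eq:aalpha}. For an $n\times n$ matrix $A=(A_{ij})$ we have
\[
\det A=\sum_{\sigma\in\gS_n}\sgn(\sigma)\prod_{i=1}^n A_{i,\sigma(i)} .
\]
Taking $A_{ij}=x_i^{\alpha_j}$, the term indexed by $\sigma$ is $\sgn(\sigma)\prod_{i=1}^n x_i^{\alpha_{\sigma(i)}}$.

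Next, I will identify this product with $x^{(\alpha^\sigma)}$. By \cref{eq:alpha-pi}, $\alpha^\sigma=(\alpha_{\sigma(1)},\dots,\alpha_{\sigma(n)})$, so $x^{(\alpha^\sigma)}=x_1^{\alpha_{\sigma(1)}}\cdots x_n^{\alpha_{\sigma(n)}}$. This is exactly the product above. Summing over $\sigma\in\gS_n$ therefore gives
\[
\det(x_i^{\alpha_j})_{1\le i,j\le n}=\sum_{\sigma\in\gS_n}\sgn(\sigma)\,x^{(\alpha^\sigma)}=a_\alpha ,
\]
as required.

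The only point needing care is the index convention. The Leibniz formula can also be written with the permutation acting on the row index, as $\sum_\sigma \sgn(\sigma)\prod_j A_{\sigma(j),j}$. In that form one gets $\prod_j x_{\sigma(j)}^{\alpha_j}$, which equals $x^{(\alpha^{\sigma^{-1}})}$. One would then reindex by $\sigma\mapsto\sigma^{-1}$, using that inversion is a bijection of $\gS_n$ and that $\sgn(\sigma^{-1})=\sgn(\sigma)$. Choosing the column-permuting form of Leibniz from the outset avoids this step entirely.

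The formula is valid over $\LPol_n$ because the Leibniz formula holds in any commutative ring, and negative exponents cause no problem.
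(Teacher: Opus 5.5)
Your proof is correct and is the same argument as the paper's: expand $\det(x_i^{\alpha_j})$ by the Leibniz formula and match each term with the definition of $a_\alpha$. The paper writes the expansion in the row-permuting form $\prod_j x_{\sigma(j)}^{\alpha_j}$ and leaves the reindexing $\sigma\mapsto\sigma^{-1}$ implicit; your use of the column-permuting form yields $x^{(\alpha^\sigma)}$ directly, so your version is slightly more explicit than the paper's.
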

\begin{proof}
By the Leibniz formula, we have
\[\det\left(x_i^{\alpha_j}\right)_{1\leq i,j\leq n}=\sum_{\sigma\in S_n}\sgn(\sigma)\prod_{j=1}^nx_{\sigma(j)}^{\alpha_j},\]
which is the same as $a_\alpha$ defined by \cref{eq:aalpha}. 
\end{proof}

\begin{cor}\label{cor:ea}
    For any $\alpha \in \SPar_n$, and $k \in \Z$ we have
    \begin{align}\label{eq:toast}
        a_{\alpha}(x_1,\dots, x_n) = e_{-n}^ka_{\alpha + (k^n)}(x_1,\dots,x_n).
    \end{align}
\end{cor}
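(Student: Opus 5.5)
The plan is to work directly from the determinant formula of \cref{lem:altdet}, since multiplying every exponent by the same shift amounts to multiplying each row of the matrix by a single power of the corresponding variable.

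Write $(k^n) = (k,\dots,k) \in \SComp_n$ and set $\beta = \alpha + (k^n)$, so that $\beta_j = \alpha_j + k$ for each $j$. By \cref{lem:altdet},
\[
a_{\beta} = \det\bigl(x_i^{\alpha_j + k}\bigr)_{1\le i,j\le n} = \det\bigl(x_i^{k}\, x_i^{\alpha_j}\bigr)_{1\le i,j\le n}.
\]
The $i$-th row of this matrix is $x_i^k$ times the $i$-th row of $(x_i^{\alpha_j})$. By multilinearity of the determinant in its rows, we can pull out the factor $x_i^k$ from row $i$ for each $i$. This gives
\[
a_{\alpha+(k^n)} = (x_1\cdots x_n)^k \det\bigl(x_i^{\alpha_j}\bigr) = e_n^k\, a_\alpha .
\]
Since $e_{-n}=e_n^{-1}$ in $\LPol_n$ by \cref{eq:e_n=1}, multiplying both sides by $e_{-n}^k$ yields \cref{eq:toast}. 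This holds for every $k\in\Z$, including negative $k$.

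An alternative route avoids determinants: apply the definition \cref{eq:aalpha} termwise. Since $(\alpha+(k^n))^\pi = \alpha^\pi + (k^n)$, we have $x^{(\alpha+(k^n))^\pi} = e_n^k\, x^{(\alpha^\pi)}$, and summing with signs gives the same identity.

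There is no real obstacle here. The only points to check are that multilinearity of the determinant holds over the commutative ring $\LPol_n$, and that negative exponents cause no trouble because $e_n$ is a unit in $\LPol_n$. Note also that the hypothesis $\alpha\in\SPar_n$ is not needed: the argument works verbatim for any $\alpha\in\SComp_n$.
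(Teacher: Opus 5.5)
Your proof is correct and matches the argument the paper intends. The paper gives no separate proof and places this corollary immediately after \cref{lem:altdet}; it follows by pulling $x_i^k$ out of row $i$ of $\det(x_i^{\alpha_j+k})$ to get $a_{\alpha+(k^n)}=e_n^k a_\alpha$, then using $e_{-n}=e_n^{-1}$. Your remark that the hypothesis $\alpha\in\SPar_n$ can be relaxed to $\alpha\in\SComp_n$ is also correct.
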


\begin{rem}\label{rem:LS-nonLS}
Note that, because each $\lambda \in \SPar_n$ has only finitely many parts unequal to zero, there is a lower bound on the parts of $\lambda$. Thus, there exists a $k \in \N$ such that for all $r\in \N$ with $r\geq k$, $\lambda+(r^n) \in \Par_n$, and thus $a_{\lambda + (r^n)} \in \mathrm{Alt}_n$.
\end{rem}

\begin{lem}\label{lem:adelta}
The polynomial
\begin{equation}
    a_{\delta^n}(x_1,\ldots,x_n)=\det\left(x_i^{n-j}\right)_{1\leq i,j\le n}=\prod_{1\leq i<j\leq n}(x_i-x_j) 
\end{equation} 
divides every element of $\Alt_n$.
\end{lem}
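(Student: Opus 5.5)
The proof has two parts. First, the determinant $\det(x_i^{n-j})$ is the Vandermonde determinant and equals $\prod_{i<j}(x_i-x_j)$. Second, this product divides every alternating polynomial.

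For the first part, I would invoke \cref{lem:altdet} with $\alpha=\delta^n$, which gives $a_{\delta^n}=\det(x_i^{n-j})$. To evaluate it, view the determinant as a polynomial in $x_1$ over $\Z[x_2,\dots,x_n]$. I would then argue exactly as in the divisibility step below: setting $x_i=x_j$ makes two rows equal, so each $x_i-x_j$ divides it.

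The resulting product $\prod_{i<j}(x_i-x_j)$ is homogeneous of degree $\binom n2$, the same degree as the determinant. Hence the determinant is $c\prod_{i<j}(x_i-x_j)$ for some integer $c$. Comparing the coefficient of the leading monomial $x_1^{n-1}x_2^{n-2}\cdots x_{n-1}$, which is the diagonal term with sign $+1$, gives $c=1$. Alternatively, one can do induction on $n$ by column operations.

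For the divisibility, let $f\in\Alt_n$ and fix $i<j$. Let $\tau=(ij)$ be the transposition. Then $\tau f=-f$, so $f$ evaluated at $x_i=x_j$ satisfies $f|_{x_i=x_j}=-f|_{x_i=x_j}$. Since $\Z$ has no $2$-torsion, this forces $f|_{x_i=x_j}=0$.

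To turn this into divisibility over $\Z$, view $f$ as a polynomial in $x_i$ over $R=\Z[x_k:k\ne i]$. The polynomial $x_i-x_j$ is monic in $x_i$, so division with remainder works over $R$: we get $f=(x_i-x_j)q+r$ with $r\in R$. Substituting $x_i=x_j$ shows $r=0$.

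Finally, the $x_i-x_j$ for $i<j$ are pairwise non-associate primes in the UFD $\Z[x_1,\dots,x_n]$. Each is prime because the quotient by it is isomorphic to a polynomial ring in $n-1$ variables, which is a domain. Therefore their product divides $f$. The only delicate point is this last step, namely the primality and non-associateness needed to pass from divisibility by each factor to divisibility by the product. I would handle it by this quotient-is-a-domain argument, or alternatively by induction: write $f=(x_1-x_2)g$ and note that $g$ still vanishes on the remaining diagonals $x_i=x_j$, because $x_1-x_2$ is a nonzero element of the domain obtained by setting $x_i=x_j$ for $\{i,j\}\neq\{1,2\}$.
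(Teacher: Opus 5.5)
Your proposal is correct and follows the paper's own route: $f$ vanishes on each diagonal $x_i=x_j$, so each $x_i-x_j$ divides $f$, and since these factors are pairwise coprime in the UFD $\Z[x_1,\dots,x_n]$, their product divides $f$. You also fill in details the paper leaves implicit (the absence of $2$-torsion, monic division over $\Z[x_k : k\neq i]$, primality of $x_i-x_j$), and you prove the Vandermonde evaluation $\det(x_i^{n-j})=\prod_{i<j}(x_i-x_j)$, which the paper takes as known.
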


\begin{proof}
    Let $f \in \Alt_n$ be arbitrary. Since $f$ evaluates to $0$ when $x_i=x_j$, it is divisible by $x_i-x_j$. Since the $x_i-x_j$ are pairwise relatively prime and $\Z[x_1, \dots, x_n]$ is a unique factorization domain, their product $\prod_{1\le i<j\le n}(x_i-x_j)$ divides $f$ in $\Z[x_1, \dots, x_n]$. 
\end{proof}

\begin{prop}{\cite[Ch.~I, \S3, p.~40]{Mac15}}\label{prop:alt-basis}
     The map $\Sym_n \to \Alt_n$, $f \mapsto f\,a_{\delta^n}(x_1,\ldots,x_n)$ is an isomorphism of $\Sym_n$-modules. Thus, the elements 
    \begin{align} 
        a_{\lambda + \delta^n}(x_1, \ldots, x_n) , ~~ \lambda \in \Par_n,
    \end{align}
    form a $\Z$-basis of $\Alt_n$.
\end{prop}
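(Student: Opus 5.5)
The plan has two parts. First we show that multiplication by $a_{\delta^n}$ is a bijective $\Sym_n$-module map. Then we show that the elements $a_{\lambda+\delta^n}$, $\lambda \in \Par_n$, form a $\Z$-basis of $\Alt_n$ by exhibiting $\Alt_n$ as a free module on these antisymmetrized monomials.

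\textbf{The map.} If $f \in \Sym_n$, then $f\,a_{\delta^n}$ lies in $\Alt_n$, by the computation preceding \cref{lem:lamdel} (a symmetric times an alternating polynomial is alternating). The map is clearly $\Sym_n$-linear.

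\textbf{Injectivity.} $\Pol_n$ is an integral domain and $a_{\delta^n}=\prod_{i<j}(x_i-x_j)\neq 0$ by \cref{lem:adelta}, so $f\,a_{\delta^n}=0$ forces $f=0$.

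\textbf{Surjectivity.} Take $g \in \Alt_n$. By \cref{lem:adelta} we can write $g = f\,a_{\delta^n}$ for some $f \in \Pol_n$. We must check that $f$ is symmetric. For $\pi \in \gS_n$,
\[
\sgn(\pi)g=\pi(g)=\pi(f)\,\pi(a_{\delta^n})=\sgn(\pi)\,\pi(f)\,a_{\delta^n}.
\]
Hence $(\pi(f)-f)\,a_{\delta^n}=0$. Cancelling the nonzero factor $a_{\delta^n}$ in the domain gives $\pi(f)=f$. So the map is an isomorphism of $\Sym_n$-modules.

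\textbf{The basis.} Here I would argue directly rather than transport a basis of $\Sym_n$, since the Schur functions have not yet been shown to be a basis at this point.

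\emph{Spanning.} Write $g=\sum_\alpha c_\alpha x^\alpha \in \Alt_n$. Comparing coefficients in $\pi(g)=\sgn(\pi)g$ gives $c_{\alpha^\pi}=\sgn(\pi)c_\alpha$, with the orbit convention matching \cref{eq:aalpha}. If $\alpha$ has two equal entries, take the transposition $\pi$ swapping them. Then $\alpha^\pi=\alpha$ and $c_\alpha=-c_\alpha$, so $c_\alpha=0$ because $\Z$ is torsion-free.

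So every monomial in $g$ has distinct exponents. Each orbit of such an $\alpha$ contains a unique strictly decreasing $\gamma$, and by \cref{lem:lamdel} (restricted to nonnegative entries) $\gamma=\lambda+\delta^n$ with $\lambda\in\Par_n$. Grouping the terms of $g$ by orbit then gives
\[
g=\sum_{\lambda \in \Par_n} c_{\lambda+\delta^n}\,a_{\lambda+\delta^n}.
\]

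\emph{Linear independence.} For distinct $\lambda$, the orbits of $\lambda+\delta^n$ are disjoint. By \cref{rem:1aalpha}, the monomial $x^{\lambda+\delta^n}$ appears in $a_{\lambda+\delta^n}$ with coefficient $1$ and appears in no other $a_{\mu+\delta^n}$. Hence any vanishing $\Z$-linear combination has all coefficients zero, in the same way as the proof that the $m_\lambda$ form a basis.

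\textbf{Main obstacle.} The delicate points are the sign bookkeeping in $c_{\alpha^\pi}=\pm c_\alpha$, which depends on whether the action on exponents is a left or right action, and the use of torsion-freeness of $\Z$ to kill coefficients of monomials with repeated exponents. Everything else is routine once \cref{lem:adelta} supplies the divisibility.
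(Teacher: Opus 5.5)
Your proof is correct and is essentially the standard argument from the cited source, Macdonald Ch.~I, \S3 (the paper itself only cites it and gives no proof): the relation $c_{\alpha^\pi}=\sgn(\pi)c_\alpha$ gives the basis $a_{\lambda+\delta^n}$ directly, and \cref{lem:adelta} together with cancellation in the domain $\Pol_n$ gives the $\Sym_n$-module isomorphism. Proving the basis statement directly, rather than transporting the Schur basis through the isomorphism, is the right choice, because Macdonald deduces the Schur basis of $\Sym_n$ from this proposition and not the other way round.
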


We will now use this to prove the analogous statement for the Laurent alternating polynomials.

\begin{prop} \label{prop:basis-LAlt}
    The elements 
    \begin{align} \label{eq:shifted-alt}
        a_{\lambda + \delta^n}(x_1, \ldots, x_n) , ~~ \lambda \in \SPar_n,
    \end{align}
    form a $\Z$-basis of $\LAlt_n$.
\end{prop}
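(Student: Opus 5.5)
The plan is to reduce to the polynomial case, \cref{prop:alt-basis}, by multiplying by a suitable power of $e_n$, exactly as in the proof of \cref{theo:gene_n}. Each $a_{\lambda+\delta^n}$ with $\lambda\in\SPar_n$ is a Laurent alternating polynomial by construction, so only spanning and linear independence need to be checked.

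\emph{Spanning.} Let $f\in\LAlt_n$. Since $f$ has finitely many monomials, there is $d\in\N$ with $e_n^d f\in\Pol_n$. Because $e_n$ is symmetric, $e_n^d f$ is still alternating, so $e_n^d f\in\Alt_n$. By \cref{prop:alt-basis} we can write
\[
e_n^d f=\sum_{\mu\in\Par_n} c_\mu\, a_{\mu+\delta^n}
\]
with finitely many nonzero $c_\mu\in\Z$. Next apply \cref{cor:ea} with $k=d$, read in the direction
\[
e_{-n}^d\, a_{\mu+\delta^n}=a_{\mu+\delta^n-(d^n)}=a_{(\mu-(d^n))+\delta^n}.
\]
This identity holds for any $\alpha\in\SComp_n$, since multiplying the determinant in \cref{lem:altdet} by $x_1^{-d}\cdots x_n^{-d}$ subtracts $d$ from every exponent. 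The tuple $\mu-(d^n)$ is weakly decreasing, so it lies in $\SPar_n$. Therefore $f=\sum_\mu c_\mu\, a_{(\mu-(d^n))+\delta^n}$ lies in the span.

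\emph{Independence.} I would argue directly from monomials rather than through the reduction. By \cref{lem:lamdel}, $\lambda\mapsto\lambda+\delta^n$ is a bijection onto the strictly decreasing tuples $M$. For $\gamma\in M$ the entries of $\gamma$ are distinct, so by \cref{rem:1aalpha} the expansion $a_\gamma=\sum_\pi \sgn(\pi)x^{\gamma^\pi}$ has no cancellation. Its monomials are exactly those $x^\beta$ with $\beta$ a rearrangement of $\gamma$, and the monomial $x^\gamma$ appears with coefficient $1$. Distinct $\gamma,\gamma'\in M$ are not rearrangements of each other, because each is the unique decreasing rearrangement of its own orbit. Hence the $a_\gamma$ have pairwise disjoint monomial supports. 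In a relation $\sum c_\lambda a_{\lambda+\delta^n}=0$, the coefficient of $x^{\lambda+\delta^n}$ is then $c_\lambda$, so every $c_\lambda=0$. Alternatively, one could multiply the relation by $e_n^d$ and invoke independence in \cref{prop:alt-basis}.

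No step is a real obstacle. The only point needing care is confirming that the shift by $(d^n)$ keeps us inside $\SPar_n$, which it does because $\SPar_n$ is closed under adding constant tuples.
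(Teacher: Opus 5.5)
Your proof is correct. The spanning half is the same as the paper's argument: clear denominators with $e_n^d$, expand in $\Alt_n$ via \cref{prop:alt-basis}, and shift back with \cref{cor:ea}. Your independence argument, however, takes a different route.

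The paper multiplies a putative relation $\sum c_\lambda a_{\lambda+\delta^n}=0$ by $e_n^k$, choosing $k$ via \cref{rem:LS-nonLS} so that every $\lambda+(k^n)$ lies in $\Par_n$. It then quotes the independence half of \cref{prop:alt-basis}.

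You instead compare coefficients directly. For strictly decreasing $\gamma$, the alternant $a_\gamma$ is supported on the orbit $\gamma\gS_n$, and $x^\gamma$ occurs with coefficient $1$. Distinct strictly decreasing tuples have disjoint orbits, so the coefficient of $x^{\lambda+\delta^n}$ in the relation is exactly $c_\lambda$.

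Your version is self-contained and needs no shifting or choice of $k$. It is the same disjoint-support reasoning the paper uses for the basis $m_\lambda$ and inside the proof of \cref{prop:SchurOGB}. The paper's version treats the polynomial case as a black box and keeps the proof parallel to that of \cref{theo:Schur-basis}. There a disjoint-support argument would not apply, because distinct Schur polynomials share monomials.
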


\begin{proof}
    Let $g \in \LAlt_n$. There is some $k$ large enough such that $e_{n}^kg \in \mathrm{Alt}_n$ ($e_{n}^k$ is symmetric so $e_{n}^kg$ is alternating). By \cref{prop:alt-basis}, there exist $c_\lambda \in \Z$ such that $e_{n}^k g=\sum_{\lambda \in \mathrm{Par}_n} c_\lambda a_{\lambda+\delta^n}$  . Then \begin{align*}
        g&=e_{-n}^k\sum_{\lambda \in \mathrm{Par}_n} c_\lambda a_{\lambda+\delta^n} \underset{\cref{eq:toast}}{=}\sum_{\lambda \in \mathrm{Par}_n} c_\lambda a_{\lambda-(k^n)+\delta^n} = \sum_{\lambda ' \in \mathrm{SPar}_n} c_{\lambda '} a_{\lambda ' +\delta^n},
    \end{align*}
    as desired.

    We have shown that the $a_{\lambda + \delta^n}$ generate $\LAlt_n$. To see that they are linearly independent, suppose there exists a finite linear combination equal to zero,
    \begin{align*}
        0 = \sum_{\lambda \in \SPar_n} c_\lambda a_{\lambda+\delta^n}(x_1, \dots , x_n ),
    \end{align*}
    with $c_\lambda \in \Z$. Then there exists a $k$ such that $\lambda + (k^n) \in \Par_n$ for all $\lambda$ with $c_\lambda \neq 0$ (using \cref{rem:LS-nonLS} and taking the finite maximum of the \(k\) needed for each \(c_\lambda \neq 0\)). Thus, 
    \begin{align*}
        0 &= \sum_{\lambda \in \SPar_n} c_\lambda e_n ^k a_{\lambda + \delta^n}(x_1, \ldots ,x_n)  \\
        &= \sum_{\lambda \in \SPar_n } c_\lambda a_{\lambda + (k^n) + \delta^n} (x_1, \ldots ,x_n) \\
        &= \sum_{\mu \in \Par_n} c_{\mu - (k^n)} a_{\mu + \delta^n}(x_1, \ldots , x_n).
    \end{align*}
    Thus, we get that all $c_\lambda = 0$ because the $a_{\lambda+\delta^n}$ for $\lambda \in \Par_n$ are linearly independent by \cref{prop:alt-basis}. 
\end{proof}

\begin{rem} \label{rem:integral}
    Note that the ring \(\LSym_n\) is a subring of \(\LPol_n\) which is a subring of the quotient field of polynomials in \(n\) variables. Thus, \(\LSym_n\) is an integral domain.
\end{rem}

\begin{lem}\label{lem:vandermond}
    For any \(t(x_1,\dots,x_n) \in \LAlt_n\), there exists a unique element \(\ell(x_1,\dots,x_n) \in \LSym_n\) such that
    \begin{align}
        t(x_1,\dots,x_n) =\ell(x_1,\dots,x_n)a_{\delta^n}(x_1,\dots,x_n).
    \end{align}
\end{lem}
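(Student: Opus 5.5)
The plan is to reduce to the polynomial case by clearing denominators with a power of $e_n$, exactly as in the proof of \cref{prop:basis-LAlt}, and then to get uniqueness from the fact that $\LSym_n$ lies in an integral domain.

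\emph{Existence.} Let $t \in \LAlt_n$. Choose $k \in \N$ large enough that $e_n^k t \in \Pol_n$. Since $e_n^k$ is symmetric, $e_n^k t$ is still alternating, so $e_n^k t \in \Alt_n$.

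There are two ways to finish from here. The first uses \cref{lem:adelta}: $a_{\delta^n}$ divides $e_n^k t$ in $\Pol_n$, so $e_n^k t = q\, a_{\delta^n}$ for some $q \in \Pol_n$. The second uses \cref{prop:alt-basis}: there is a unique $f \in \Sym_n$ with $e_n^k t = f\, a_{\delta^n}$. The second route is cleaner because it hands us symmetry of the quotient directly. Setting $\ell := e_{-n}^k f$, we have $\ell \in \LSym_n$, since $e_{-n} \in \LSym_n$ and $\LSym_n$ is a ring. Multiplying $e_n^k t = f\,a_{\delta^n}$ by $e_{-n}^k$ gives $t = \ell\, a_{\delta^n}$.

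If I went through \cref{lem:adelta} instead, I would need one extra check that $q$ is symmetric. For any $\pi \in \gS_n$, applying $\pi$ to both sides gives
\[
\sgn(\pi)\, e_n^k t = \pi(q)\,\sgn(\pi)\, a_{\delta^n}.
\]
Comparing with $e_n^k t = q\,a_{\delta^n}$ shows $\pi(q)\, a_{\delta^n} = q\, a_{\delta^n}$. Cancelling the nonzero element $a_{\delta^n}$ in the domain $\LPol_n$ then gives $\pi(q) = q$.

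\emph{Uniqueness.} Suppose $\ell a_{\delta^n} = \ell' a_{\delta^n}$ with $\ell, \ell' \in \LSym_n$. Then $(\ell-\ell')\,a_{\delta^n} = 0$ in $\LPol_n$. By \cref{rem:integral}, $\LPol_n$ is contained in a field, so it is an integral domain. Also $a_{\delta^n} = \prod_{i<j}(x_i-x_j) \neq 0$. Hence $\ell = \ell'$.

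There is no real obstacle here. The only step needing care is knowing that the quotient $\ell$ is symmetric and not merely a Laurent polynomial. Invoking \cref{prop:alt-basis}, an isomorphism $\Sym_n \to \Alt_n$, settles this immediately. If one prefers \cref{lem:adelta}, the short cancellation argument above does the same job.
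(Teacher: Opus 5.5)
Your proof is correct and matches the paper's argument: multiply by $e_n^k$ to land in $\Alt_n$, use \cref{prop:alt-basis} to write $e_n^k t = q\,a_{\delta^n}$ with $q \in \Sym_n$, set $\ell = e_{-n}^k q$, and get uniqueness because $\LPol_n$ is an integral domain. Your alternative route through \cref{lem:adelta}, using the cancellation argument to show the quotient is symmetric, is also valid but not needed.
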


\begin{proof}
    By multiplying by a large enough power \(e_n^k\), we have that \(e_n^k t(x_1,\dots,x_n) \in \mathrm{Alt}_n\). By \cref{prop:alt-basis}, there exists a polynomial \(q(x_1,\dots,x_n) \in \Sym_n\) such that
    \[
    e_n^k t(x_1,\dots,x_n) = q(x_1,\dots,x_n) a_{\delta^n}(x_1,\dots,x_n).
    \]
    Therefore, it suffices to take \(\ell(x_1,\dots,x_n) = e_{-n}^k q(x_1,\dots,x_n)\), which is symmetric because both $e_{-n}^k(x_1,\dots,x_n)$ and $q(x_1,\dots,x_n)$ are symmetric. This $\ell(x_1, \dots, x_n)$ is unique because $\LPol_n$ is an integral domain. 
\end{proof}

\begin{defin}\label{def:schur}
    For $\lambda \in \SPar_n(k)$, the \emph{Laurent Schur polynomials} in $n$ variables are defined as
    \begin{align}
        s_\lambda(x_1, \ldots, x_n) := \dfrac{a_{\lambda + \delta^n}(x_1, \ldots, x_n)}{a_{\delta^n}(x_1, \ldots, x_n)} = \frac{\det(x_i^{\lambda_j + n - j})_{1\leq i,j\leq n}}{\det(x_i^{n - j})_{1\leq i,j\leq n}} \in \LSym_n^{k}. \label{eq:Laurent-Schur}
    \end{align}
    In particular, if $\lambda \in \Par_n$, then $s_\lambda$ is the usual Schur polynomial.
\end{defin}

\begin{rem}
    Note that the second equality in \cref{def:schur} is given by \cref{lem:altdet}. The fact that $s_\lambda(x_1, \dots, x_n)$ is a  uniquely determined Laurent symmetric polynomial is due to \cref{lem:vandermond} and \(\LSym_n\) being an integral domain (\cref{rem:integral}).
\end{rem}

\begin{prop} \label{prop:schuren}
    For all $k \in \Z,\, \lambda \in  \SPar_n$, we have
    \begin{align}
        s_\lambda(x_1,\dots,x_n)  = e_{-n}^ks_{\lambda + (k^n)}(x_1,\dots,x_n). \label{eq:schuren}
    \end{align}
\end{prop}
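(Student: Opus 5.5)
The identity follows directly from the definition of $s_\lambda$ in \cref{def:schur} together with \cref{cor:ea}. The idea is to compare numerators: both Laurent Schur polynomials have the same denominator $a_{\delta^n}$. The shift of the partition index by $(k^n)$ only multiplies the numerator by a power of $e_n$.

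First I will check that $\lambda + (k^n) \in \SPar_n$ for every $k \in \Z$, so that the right-hand side of \cref{eq:schuren} is defined. Adding $k$ to every entry of the $n$-tuple $(\lambda_1,\dots,\lambda_n)$ preserves the weak decrease $\lambda_1 + k \geq \cdots \geq \lambda_n + k$. Hence the result is again an element of $\SPar_n$, of size $|\lambda| + nk$. Moreover $(\lambda + (k^n)) + \delta^n = (\lambda + \delta^n) + (k^n)$.

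Next I will apply \cref{cor:ea}, that is \cref{eq:toast}, with $\alpha = \lambda + \delta^n$. This gives
\[
a_{\lambda+\delta^n} = e_{-n}^k\, a_{(\lambda+(k^n))+\delta^n}.
\]
Although \cref{cor:ea} is stated without proof, it is immediate from \cref{lem:altdet}. Multiplying each row $i$ of the matrix $(x_i^{\alpha_j + k})_{i,j}$ by $x_i^{-k}$ multiplies the determinant by $(x_1\cdots x_n)^{-k} = e_{-n}^k$. In other words, $\det(x_i^{\alpha_j+k}) = e_n^k \det(x_i^{\alpha_j})$. I would include this one-line justification.

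Finally I will divide both sides by $a_{\delta^n}$. By \cref{def:schur}, $s_\mu$ is the unique element of $\LSym_n$ with $s_\mu\, a_{\delta^n} = a_{\mu+\delta^n}$, as guaranteed by \cref{lem:vandermond}. Therefore
\[
\bigl(e_{-n}^k s_{\lambda+(k^n)}\bigr)\, a_{\delta^n} = e_{-n}^k a_{\lambda+(k^n)+\delta^n} = a_{\lambda+\delta^n} = s_\lambda\, a_{\delta^n}.
\]
Since $\LPol_n$ is an integral domain and $a_{\delta^n} \neq 0$, we can cancel $a_{\delta^n}$, which yields \cref{eq:schuren}. There is no real obstacle here. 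The only point needing care is the bookkeeping that $\lambda + (k^n)$ stays in $\SPar_n$ for negative $k$ as well, and that the uniqueness in \cref{lem:vandermond} is what licenses the division.
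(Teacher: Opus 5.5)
Your proof is correct and follows essentially the same route as the paper: apply \cref{cor:ea} with $\alpha = \lambda + \delta^n$, then cancel $a_{\delta^n}$ using \cref{lem:vandermond} and the fact that $\LPol_n$ is an integral domain. Two details you add are ones the paper leaves implicit: the row-scaling justification of \cref{cor:ea} via \cref{lem:altdet}, and the check that $\lambda + (k^n) \in \SPar_n$ for negative $k$.
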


\begin{proof}
From \cref{cor:ea}, we have 
\begin{align*}
    a_{\lambda + \delta^n}(x_1,\dots,x_n) = e_{-n}^ka_{\lambda + (k^n) + \delta^n}(x_1,\dots ,x_n).
\end{align*}
By \cref{lem:vandermond}, we may divide both sides by $a_{\delta^n}(x_1,\dots,x_n)$, and the claim follows.
\end{proof}

\begin{theo}
    The Laurent Schur polynomials $s_\lambda$ for $\lambda \in \SPar_n$ form a basis of $\LSym_n$ over $\Z$. \label{theo:Schur-basis}
\end{theo}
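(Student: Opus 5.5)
The plan is to transport the basis of $\LAlt_n$ from \cref{prop:basis-LAlt} back to $\LSym_n$ by dividing by $a_{\delta^n}$. Concretely, I will use the map
\[
\Phi\colon \LSym_n \longrightarrow \LAlt_n,\qquad f \longmapsto f\,a_{\delta^n}(x_1,\dots,x_n),
\]
and show that it is an isomorphism of $\Z$-modules (indeed of $\LSym_n$-modules). By \cref{def:schur}, $\Phi(s_\lambda)=a_{\lambda+\delta^n}$ for every $\lambda\in\SPar_n$. Hence, once $\Phi$ is known to be bijective, the preimage of the $\Z$-basis $\{a_{\lambda+\delta^n}\mid\lambda\in\SPar_n\}$ of $\LAlt_n$ is exactly $\{s_\lambda\mid \lambda\in\SPar_n\}$, and that set is a $\Z$-basis of $\LSym_n$.

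The steps, in order, are as follows.

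\begin{enumerate}
\item $\Phi$ is well defined. $a_{\delta^n}$ is alternating, and the product of a symmetric and an alternating Laurent polynomial is alternating; this was observed just before \cref{lem:lamdel}. Linearity over $\Z$ is clear.
\item $\Phi$ is injective. $\LPol_n$ is an integral domain (\cref{rem:integral}) and $a_{\delta^n}=\prod_{i<j}(x_i-x_j)\neq 0$ by \cref{lem:adelta}.
\item $\Phi$ is surjective. This is exactly the existence part of \cref{lem:vandermond}: every $t\in\LAlt_n$ is $\ell\, a_{\delta^n}$ for some $\ell\in\LSym_n$.
\item Conclude. Spanning and linear independence of the $s_\lambda$ follow directly from the corresponding properties of the $a_{\lambda+\delta^n}$ under the bijection $\Phi$. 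For spanning: write $f a_{\delta^n}=\sum c_\lambda a_{\lambda+\delta^n}$ and cancel $a_{\delta^n}$ to get $f=\sum c_\lambda s_\lambda$. For independence: a relation $\sum c_\lambda s_\lambda=0$, after multiplying by $a_{\delta^n}$, gives $\sum c_\lambda a_{\lambda+\delta^n}=0$, so all $c_\lambda=0$.
\end{enumerate}

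There is essentially no obstacle left, since the hard work sits in \cref{prop:basis-LAlt} and \cref{lem:vandermond}. The one point needing care is that the $s_\lambda$ genuinely lie in $\LSym_n$ with integer coefficients, rather than merely in the fraction field. This is guaranteed by \cref{lem:vandermond} applied to $t=a_{\lambda+\delta^n}$.

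As a sanity check, one could alternatively reduce to the classical fact that the $s_\mu$ with $\mu\in\Par_n$ form a basis of $\Sym_n$. Using \cref{prop:schuren}, one would multiply by $e_n^k$ and use $e_n^k s_\lambda=s_{\lambda+(k^n)}$, mirroring the proof of \cref{prop:basis-LAlt}. I would keep this only as a remark, since the module-isomorphism argument above is cleaner.
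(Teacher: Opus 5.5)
Your proof is correct, but it takes a different route from the paper. The paper argues directly from the classical Schur basis of $\Sym_n$. It multiplies $\ell\in\LSym_n$ by $e_n^k$ to land in $\Sym_n$, expands in $s_\mu$ for $\mu\in\Par_n$, and shifts back using \cref{prop:schuren}. For independence, it multiplies a relation by $e_n^k$ to move every index into $\Par_n$. This is exactly the alternative you set aside as a remark.

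You instead transport the $\Z$-basis of \cref{prop:basis-LAlt} through $\Phi$. There is no circularity here, since \cref{lem:vandermond} and \cref{prop:basis-LAlt} rest only on \cref{prop:alt-basis} and \cref{cor:ea}. Your approach has three consequences:
\begin{enumerate}
\item The passage from $\Par_n$ to $\SPar_n$ happens only once, inside \cref{prop:basis-LAlt}, so the theorem becomes a formal consequence. In effect, your proof is the Laurent analogue of Macdonald's classical proof through $\Sym_n\cong\Alt_n$.
\item Your argument establishes along the way that $\Phi$ is an isomorphism, i.e.\ that $\LAlt_n$ is free of rank one over $\LSym_n$. The paper runs this in the opposite direction, deducing that corollary from the theorem together with \cref{prop:basis-LAlt}.
\item The paper's route, in exchange, does not depend on \cref{prop:basis-LAlt} at all and treats the classical Schur basis as a black box.
\end{enumerate}

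One small observation: for the theorem itself you only need injectivity of $\Phi$ and the fact that each $s_\lambda$ lies in $\LSym_n$ (the existence part of \cref{lem:vandermond} applied to $a_{\lambda+\delta^n}$). Full surjectivity of $\Phi$ is a bonus that yields the corollary.
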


\begin{proof}
    For each $\ell \in \LSym_n$ there exists a $k \in \N$ such that $g = e_n^{k} \ell \in \Sym_n$. As the usual Schur polynomials form a basis for $\Sym_n$ \cite[Ch.~I, \S~3, Eq.~(3.2)]{Mac15}, we may express $g$ as a linear combination of Schur polynomials:
    \begin{align*}
        g = \sum_{\lambda \in \Par_n} c_\lambda s_\lambda(x_1, \ldots , x_n),\qquad
        c_\lambda \in \Z.
    \end{align*}
    Thus,
    \begin{align*}
        \ell &= e_n^{-k}  \sum_{\lambda \in \Par_n} c_\lambda s_\lambda(x_1, \ldots , x_n) \\
        &=  \sum_{\lambda \in \Par_n} c_\lambda  e_n^{-k} s_\lambda (x_1, \ldots , x_n) \\
        &= \sum_{\lambda \in \Par_n} c_\lambda s_{\lambda - (k^n)} (x_1, \ldots, x_n) \\
        &= \sum_{\mu \in \SPar_n} c_{\mu+(k^n)} s_{\mu} (x_1, \ldots, x_n),
    \end{align*}
    where we set $c_\mu = 0 $ for all $\mu \notin \Par_n$. We have thus shown that $\ell$ is in the span of the $s_\lambda$ and thus, they span $\LSym_n$.
    To show that they are linearly independent, suppose there exists a finite linear combination equal to zero:
    \begin{align*}
        0 = \sum_{\lambda \in \SPar_n} c_\lambda s_\lambda(x_1, \ldots ,x_n),\qquad
        c_\lambda \in \Z.
    \end{align*}
    Then there exists a $k$ such that $\lambda + (k^n) \in \Par_n$ for all $\lambda$ with $c_\lambda \neq 0$ (using \cref{rem:LS-nonLS} and taking the finite maximum of the \(k\) needed for each \(c_\lambda \neq 0\)). Thus, 
    \begin{align*}
        0 &= \sum_{\lambda \in \SPar_n} c_\lambda e_n ^k s_\lambda(x_1, \ldots ,x_n)  \\
        &= \sum_{\lambda \in \SPar_n } c_\lambda s_{\lambda + (k^n)} (x_1, \ldots ,x_n) \\
        &= \sum_{\mu \in \Par_n} c_{\mu - (k^n)} s_\mu(x_1, \ldots , x_n),
    \end{align*}
    and so we get that all $c_\lambda = 0$ because the $s_\lambda$ for $\lambda \in \Par_n$ are linearly independent.
\end{proof} 

\begin{cor}
    $\LAlt_n$ is a free $\LSym_n$-module of rank one. 
\end{cor}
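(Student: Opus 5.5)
The plan is to show that multiplication by $a_{\delta^n}$ gives an isomorphism of $\LSym_n$-modules
\[
\Phi \colon \LSym_n \longrightarrow \LAlt_n, \qquad f \longmapsto f\,a_{\delta^n}(x_1,\dots,x_n).
\]
Then $\LAlt_n$ is free of rank one, with generator $a_{\delta^n}$.

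First, $\Phi$ is well defined and $\LSym_n$-linear. We saw before \cref{lem:lamdel} that the product of an element of $\LSym_n$ with an element of $\LAlt_n$ lies in $\LAlt_n$. Also, $a_{\delta^n}\in\LAlt_n$ by construction \cref{eq:aalpha}. Linearity over $\LSym_n$ is then just associativity and commutativity of multiplication in $\LPol_n$.

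Second, surjectivity and injectivity both follow from \cref{lem:vandermond}. Every $t\in\LAlt_n$ can be written as $t=\ell\,a_{\delta^n}$ for some $\ell\in\LSym_n$, so $\Phi$ is onto. For injectivity, suppose $f\,a_{\delta^n}=0$. Since $a_{\delta^n}=\prod_{i<j}(x_i-x_j)\neq 0$ by \cref{lem:adelta}, and $\LPol_n$ is an integral domain (\cref{rem:integral}), we get $f=0$. Hence $\Phi$ is an isomorphism, and $\{a_{\delta^n}\}$ is an $\LSym_n$-basis of $\LAlt_n$.

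As a consistency check, this agrees with the $\Z$-bases already found. Under $\Phi$, the Schur basis of \cref{theo:Schur-basis} maps to $s_\lambda a_{\delta^n}=a_{\lambda+\delta^n}$, which is exactly the $\Z$-basis of \cref{prop:basis-LAlt}. I expect no real obstacle. The only point needing care is that $\LAlt_n$ is torsion-free over $\LSym_n$, and this is handled by the integral-domain argument above.
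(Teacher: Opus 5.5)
Your proof is correct. You use the same map as the paper, $\ell \mapsto a_{\delta^n}\ell$, but you establish bijectivity differently.

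The paper argues in one line at the level of $\Z$-bases. The map sends the Laurent Schur basis of \cref{theo:Schur-basis} to the alternant basis $\{a_{\lambda+\delta^n}\}_{\lambda\in\SPar_n}$ of \cref{prop:basis-LAlt}. It is therefore a bijection of abelian groups, and, being $\LSym_n$-linear, an isomorphism of $\LSym_n$-modules.

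You instead get surjectivity from the existence part of \cref{lem:vandermond}. You get injectivity from $a_{\delta^n}\neq 0$ in the domain $\LPol_n$, which is exactly the uniqueness part of that lemma. So your argument amounts to noticing that \cref{lem:vandermond} already says $\Phi$ is bijective.

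Your route is more self-contained. It needs neither basis result, each of which required its own spanning and independence argument via multiplication by powers of $e_n$; it needs only division by $a_{\delta^n}$. The paper's route is shorter given what has already been proved. It also gives the extra information that the isomorphism matches the two distinguished $\Z$-bases, which you recover in your consistency check.

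Both rest on the same input in the end. The identity $s_\lambda a_{\delta^n}=a_{\lambda+\delta^n}$ that the paper uses is the definition of $s_\lambda$, and that definition is justified by \cref{lem:vandermond}.
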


\begin{proof}
    We show that $\LAlt_n$ is isomorphic to $\LSym_n$, as an $\LSym_n$-module, under the following isomorphism:
    \begin{align*}
        \varphi \colon \LSym_n &\rightarrow \LAlt_n, \\
        \ell &\longmapsto a_{\delta^n}\ell.
    \end{align*}
    The map $\varphi$ sends $s_\lambda$ to $a_{\lambda+\delta^n}$.
    Since the former form a $\Z$-basis of $\LSym_n$ and the latter form a $\Z$-basis of $\LAlt_n$, the statement follows. 
\end{proof}

Recall that $e_{-r}(x_1, \dots, x_n)=e_r(x_1^{-1}, \dots, x_n^{-1})$. Likewise, let $h_{-r}(x_1, \dots, x_n)=h_r(x_1^{-1}, \dots, x_n^{-1})$. 
\begin{prop}\label{prop:schur_det}
    Let $\lambda \in \SPar_n$, and \(r,d,m \in \N\) such that \(\lambda + (d^n) \in \Par_n\), $r \geq \ell(\lambda + (d^n))$ and $m\geq \ell((\lambda + (d^n))')$. Then
    \begin{align*}
        s_{\lambda} &= e_{-n}^d \det(\mathds{1}_{\N}((\lambda + (d^n))_i - i + j)h_{(\lambda + (d^n))_i - i + j})_{1\leq i,j\leq r},
        \qquad \text{and}
    \\
         s_{\lambda} &= e_{-n}^d \det(\mathds{1}_{\N}((\lambda + (d^n))'_i - i + j)e_{(\lambda + (d^n))'_i - i + j})_{1\leq i,j\leq m},
         \end{align*}
    where $\mathds{1}_\N$ is the indicator function, defined as
    \begin{align*}
        \mathds{1}_\N(x) := \begin{cases}
            1, & \text{if } x \in \N\\ 0,& \text{if } x \notin \N.
        \end{cases}
    \end{align*}
\end{prop}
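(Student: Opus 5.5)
The plan is to reduce to the classical Jacobi--Trudi identities for ordinary Schur polynomials via the shift formula of \cref{prop:schuren}. Set $\mu := \lambda + (d^n)$; by hypothesis $\mu \in \Par_n$. Applying \cref{eq:schuren} with $k = d$ gives
\[
s_\lambda = e_{-n}^d\, s_\mu(x_1,\dots,x_n),
\]
and $s_\mu$ is the usual Schur polynomial in $n$ variables (\cref{def:schur}). So both identities follow once we have the two Jacobi--Trudi determinants for $s_\mu$ in $n$ variables.

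For the first identity, invoke \cite[Ch.~I, \S3, Eq.~(3.4)]{Mac15}: for a partition $\mu$ and any $r \geq \ell(\mu)$, $s_\mu = \det(h_{\mu_i - i + j})_{1\le i,j\le r}$, with the convention $h_k = 0$ for $k<0$ and $h_0 = 1$. The factor $\mathds{1}_\N(\mu_i - i + j)$ in the statement encodes exactly this convention. The reason is that the paper has now introduced $h_{-r}(x_1,\dots,x_n) = h_r(x_1^{-1},\dots,x_n^{-1})$, which is nonzero, so the negative-index entries must be killed explicitly. The identity is stated there in infinitely many variables, so I would specialize to $n$ variables by setting $x_{n+1}=x_{n+2}=\cdots=0$. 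This specialization sends $s_\mu$ to $s_\mu(x_1,\dots,x_n)$ (zero if $\ell(\mu)>n$, which does not happen here) and $h_k$ to $h_k(x_1,\dots,x_n)$. Independence of the size $r \ge \ell(\mu)$ holds because extra rows $i > \ell(\mu)$ have $\mu_i = 0$, so the matrix becomes block upper-unitriangular in those rows.

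For the second identity, use the dual Jacobi--Trudi identity \cite[Ch.~I, \S3, Eq.~(3.5)]{Mac15}: $s_\mu = \det(e_{\mu'_i - i + j})_{1\le i,j\le m}$ for $m \ge \ell(\mu')$. The same indicator convention applies, and the same specialization to $n$ variables is valid, with $e_k(x_1,\dots,x_n)=0$ for $k>n$ handled automatically. Multiplying both determinant identities by $e_{-n}^d$ then gives the claim.

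The only real point of care, which I expect to be the main obstacle, is bookkeeping rather than mathematics. One must check that the indicator function reproduces Macdonald's convention exactly, including the $h_0 = e_0 = 1$ entries. One must also justify that the cited infinite-variable identities specialize correctly to $n$ variables; this holds because the restriction $\Lambda \to \Sym_n$ is a ring homomorphism sending $s_\mu, h_k, e_k$ to their $n$-variable counterparts, and for $\ell(\mu)\le n$ the image $s_\mu$ agrees with the bialternant definition \cref{eq:Laurent-Schur}.
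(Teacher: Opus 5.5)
Your proposal is correct and matches the paper's proof: write $s_\lambda = e_{-n}^d s_{\lambda+(d^n)}$ via \cref{prop:schuren}, then apply the classical Jacobi--Trudi identities for the ordinary Schur polynomial $s_{\lambda+(d^n)}$, with the indicator function enforcing $h_k = e_k = 0$ for $k<0$. Your extra remarks on specializing from infinitely many variables to $n$ variables, and on the determinant not depending on the matrix size $r$ (or $m$), just make explicit what the paper leaves to the citation of Macdonald.
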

\begin{rem} We have used the indicator function $\mathds{1}_\N$ to have 0 entries where the subscript $r$ of $e_r,h_r$ are negative. This is to enforce the non-Laurent condition supposed in the original Jacobi-Trudi formula, i.e, $e_r=h_r=0$ for $r<0$ in \cite[Ch.~I, \S 3, Eqs.3.4-3.5]{Mac15}.
\end{rem}

\begin{proof}
    We only prove the first equality, as the second one is analogous. Indeed, for any $\lambda \in \SPar_n$,
    \begin{align*}
        s_{\lambda}(x_1,\dots,x_n) &=e_{-n}^ds_{\lambda + (d^n)}(x_1,\dots,x_n) \\&= e_{-n}^d\det(\mathds{1}_\N((\lambda + (d^n))_i - i + j)h_{(\lambda + (d^n))_i - i + j}),
    \end{align*}
where we have used in the first equality \cref{prop:schuren}, and the second one follows from the Jacobi-Trudi identities for finite variables (see \cite[Ch.~I, \S 3, Eqs.3.4-3.5]{Mac15}).
\end{proof}

Cauchy's formula in \cite[Ch.~I, Eq.~(4.3)]{Mac15}
can be generalized as follows.
\begin{prop}
     Let $k \in \Z$. We have
     \begin{align}
         e_n^k(x)e_n^k(y)\prod_{i,j =1}^n(1 - x_iy_j)^{-1} = \sum_{\mu \in \SPar_n^{\geq k}}s_\mu(x)s_\mu(y),
     \end{align}
     where $\SPar_n^{\geq k}:= \{\mu \in \SPar_n \mid \mu_n \geq k\}$. 
\end{prop}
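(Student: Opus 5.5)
We reduce to the classical Cauchy identity in $n$ variables, which by \cite[Ch.~I, Eq.~(4.3)]{Mac15} reads
\[
\prod_{i,j=1}^n(1-x_iy_j)^{-1}=\sum_{\lambda\in\Par_n}s_\lambda(x)s_\lambda(y),
\]
as an identity of formal power series in $x_1,\dots,x_n,y_1,\dots,y_n$. Only partitions of length at most $n$ contribute, since $s_\lambda(x_1,\dots,x_n)=0$ for $\ell(\lambda)>n$. Both sides of the claimed identity are then understood in the ring of formal Laurent series obtained by inverting $e_n(x)e_n(y)$. Multiplication by the monomial $e_n^k(x)e_n^k(y)=(x_1\cdots x_n)^k(y_1\cdots y_n)^k$ is well defined there, also for negative $k$.

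The key step is to apply \cref{prop:schuren} to each term. Applied with the integer $k$ and $\lambda\in\Par_n$, it gives $e_n^k s_\lambda = s_{\lambda+(k^n)}$ in each set of variables. Hence
\[
e_n^k(x)e_n^k(y)\sum_{\lambda\in\Par_n}s_\lambda(x)s_\lambda(y)=\sum_{\lambda\in\Par_n}s_{\lambda+(k^n)}(x)\,s_{\lambda+(k^n)}(y).
\]
Interchanging the multiplication with the infinite sum is justified because multiplying by a monomial is continuous for the relevant formal topology: each coefficient of the product involves only finitely many terms.

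The final step is reindexing via $\mu=\lambda+(k^n)$. Within $n$-tuples, the map $\lambda\mapsto\lambda+(k^n)$ is a bijection from $\Par_n=\{\lambda\in\SPar_n\mid\lambda_n\ge 0\}$ onto $\SPar_n^{\ge k}=\{\mu\in\SPar_n\mid \mu_n\ge k\}$. Its inverse is $\mu\mapsto\mu-(k^n)$: weak decrease is preserved, and $\mu_n\ge k$ is equivalent to $\mu_n-k\ge 0$. This is the same kind of shift argument as in \cref{lem:lamdel}. Substituting yields the stated formula.

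The only real obstacle is making precise the ring in which the infinite sum and the product $\prod(1-x_iy_j)^{-1}$ live, once negative powers appear. I would take the ring $\Z[[x,y]][(x_1\cdots x_ny_1\cdots y_n)^{-1}]$. This is the localization of the power series ring at a monomial, which is injective since the power series ring is a domain. In this ring, every element is a monomial times a power series, so convergence of $\sum_\mu s_\mu(x)s_\mu(y)$ is inherited from the classical case.
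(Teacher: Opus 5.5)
Your proposal is correct and is essentially the paper's proof. Both start from the $n$-variable Cauchy identity, apply $e_n^k s_\lambda = s_{\lambda+(k^n)}$ (\cref{prop:schuren}) termwise in both sets of variables, and reindex along the bijection $\Par_n \to \SPar_n^{\geq k}$, $\lambda\mapsto\lambda+(k^n)$. The differences are small. You multiply by $e_n^k(x)e_n^k(y)$ before applying the shift rather than after, and you make explicit a point the paper leaves implicit: the ambient ring in which the infinite sum and negative powers make sense, namely the power series ring with $x_1\cdots x_n y_1\cdots y_n$ inverted.
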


\begin{proof}
    From the proof of \cite[Ch.~I, \S3, Eq.~(4.3)]{Mac15}
    we have
    \begin{align*}
        \prod_{i,j =1}^n(1 - x_iy_j)^{-1} &= \sum_{\mu \in \Par_n}s_\mu(x)s_\mu(y) \\&= e_{-n}^k(x) e_{-n}^k(y)\sum_{\mu \in \Par_n}s_{\mu {+} (k^n)}(x)s_{\mu {+} (k^n)}(y)
        \\ &= e_{-n}^k(x) e_{-n}^k(y)\sum_{\nu \in \SPar_n^{\geq k}}s_{\nu}(x)s_{\nu}(y),
    \end{align*}
    where the second equality is given by \cref{prop:schuren}, and the third equality holds since the map 
    \begin{align*}
        \rho \colon \Par_n &\longrightarrow\SPar_n^{\geq k},\\
        \mu &\longmapsto \mu+ (k^n),
    \end{align*}
    is bijective. 
    Then, by multiplying by $e_n^k(x)e_n^k(y)$ on both sides, the claim follows.
\end{proof}

    \begin{defin}\label{def:hall}
        The 
        \emph{Hall inner product}
        of two Laurent symmetric polynomials $f,g \in \LSym_n$ is
        \begin{align}
            \langle f, g \rangle_n
            = \frac{1}{n!} \operatorname{CT} \left( f(x_1, \dots , x_n) g(x_1^{-1}, \dots, x_n^{-1}) \prod_{i \ne j} \left(1 - \frac{x_i}{x_j}\right) \right), \label{eq:Hall-IP}
        \end{align}
        where $\operatorname{CT}$ denotes the constant term of the Laurent polynomial.
    \end{defin}
 
    Evidently, $\cref{eq:Hall-IP}$ is a bilinear map \(\LSym_n \times \LSym_n \to \Q\). Later we will see that the pairing \cref{eq:Hall-IP} is indeed an inner product (i.e., symmetric and positive definite) on $\LSym_n$. 

    \begin{prop} \label{prop:bilin-symm}
    The pairing \cref{eq:Hall-IP} is symmetric. 
    \end{prop}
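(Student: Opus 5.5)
The plan is to use the substitution $x_i \mapsto x_i^{-1}$ for all $i$, which is a ring automorphism $\iota$ of $\LPol_n$. It preserves constant terms: $\iota(x^\alpha)=x^{-\alpha}$, and $x^{-\alpha}$ is constant exactly when $x^\alpha$ is. So $\operatorname{CT}(\iota(F)) = \operatorname{CT}(F)$ for every $F \in \LPol_n$.

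Write $\Delta := \prod_{i \neq j}(1 - x_i/x_j)$. The first key step is to check that $\Delta$ is invariant under $\iota$. Indeed,
\[
\iota(\Delta) = \prod_{i\neq j}\left(1 - \frac{x_j}{x_i}\right),
\]
and the map $(i,j)\mapsto(j,i)$ is a bijection of the ordered pairs with $i\neq j$. Hence the product is just a reindexing of $\Delta$, and $\iota(\Delta)=\Delta$.

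Now set $F = f(x_1,\dots,x_n)\,g(x_1^{-1},\dots,x_n^{-1})\,\Delta$. Since $\iota$ is a ring homomorphism,
\[
\iota(F) = f(x_1^{-1},\dots,x_n^{-1})\,g(x_1,\dots,x_n)\,\Delta,
\]
which is exactly the Laurent polynomial appearing in the definition of $\langle g, f\rangle_n$. Taking constant terms and dividing by $n!$ then gives $\langle f,g\rangle_n = \langle g,f\rangle_n$.

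No real obstacle is expected. The only points to be careful about are that $\operatorname{CT}$ is invariant under $\iota$ and that $\Delta$ is taken over ordered pairs, so that swapping $i$ and $j$ is a bijection of the index set. Note that symmetry of $f$ and $g$ under $\gS_n$ is not even needed for this argument.
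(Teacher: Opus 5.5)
Your proposal is correct and is essentially the paper's proof. The paper likewise uses that $\operatorname{CT}(h(x)) = \operatorname{CT}(h(x^{-1}))$ for every Laurent polynomial $h$, together with the invariance of $\prod_{i\neq j}(1 - x_i/x_j)$ under $x_i \mapsto x_i^{-1}$ (swapping $i$ and $j$ reindexes the product); you simply make the automorphism $\iota$ and the bijection on ordered pairs explicit.
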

    \begin{proof}
        Let $f(x)$ and $f(x^{-1})$ be shorthands for $f(x_1, \dots , x_n)$ and $ f(x_1^{-1}, \dots ,x_n^{-1}) $ respectively. For all $h \in \Z[x^{\pm 1}_1, \dots , x_n^{\pm}]$ we have $\operatorname{CT}(h(x)) = \operatorname{CT}(h(x^{-1})) $ and thus 
        \begin{align*}
            \langle f, g \rangle_n
            &= \frac{1}{n!} \operatorname{CT} \left( f(x_1,\dotsc,x_n) g(x_1^{-1},\dotsc,x_n^{-1}) \prod_{i \ne j} \left( 1 - \frac{x_i}{x_j} \right) \right) \\
            &= \frac{1}{n!} \operatorname{CT} \left( f(x_1^{-1},\dotsc,x_n^{-1}) g(x_1,\dotsc,x_n) \prod_{i \ne j} \left( 1 - \frac{x_j}{x_i} \right) \right) \\
            &= \langle g, f \rangle_n,
        \end{align*}
        since the product $\prod_{i \ne j} \left(1 - \frac{x_i}{x_j}\right)$ is invariant under exchanging $i$ and $j$.
        \end{proof}
    
     \begin{lem}\label{lem:tech}
    Fix $\lambda \in \SPar_n$. Then 
         \begin{equation}\label{eq:tild_lamb}
             a_{\lambda}(x_1^{-1}, \dots, x_n^{-1}) = (-1)^{\frac{n(n-1)}{2}}a_{\lambda^\vee}(x_1,\dots,x_n),
         \end{equation}
         where $\lambda^\vee := ( - \lambda_n, \dots, -\lambda_1) \in \SPar_n$.
     \end{lem}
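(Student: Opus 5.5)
We use the determinantal description of \cref{lem:altdet}, $a_\alpha = \det(x_i^{\alpha_j})_{1\le i,j\le n}$, and substitute $x_i \mapsto x_i^{-1}$. This gives
\[
a_\lambda(x_1^{-1},\dots,x_n^{-1}) = \det\bigl(x_i^{-\lambda_j}\bigr)_{1\le i,j\le n} = a_{(-\lambda_1,\dots,-\lambda_n)}(x_1,\dots,x_n).
\]
So the whole statement reduces to comparing $a_\beta$ for $\beta = (-\lambda_1,\dots,-\lambda_n)$ with $a_{\lambda^\vee}$, where $\lambda^\vee = (-\lambda_n,\dots,-\lambda_1)$ is the reversal of $\beta$.

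Reversal is the action of the longest permutation $w_0 \in \gS_n$, $w_0(i) = n+1-i$, in the sense of \cref{eq:alpha-pi}: $\beta^{w_0} = (\beta_n,\dots,\beta_1) = \lambda^\vee$. On the determinant side, this reverses the order of the columns of $(x_i^{\beta_j})$. Equivalently, by \cref{rem:1aalpha}, $a_{\beta^{\pi}} = \sgn(\pi)\, a_\beta$. To check this directly from \cref{eq:aalpha}, reindex the sum: $(\beta^{\pi})^{\sigma} = \beta^{\pi\sigma}$ (or the other composition order, depending on the convention; the sign is unaffected), and $\sgn(\sigma) = \sgn(\pi)\sgn(\pi\sigma)$. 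Hence $a_{\lambda^\vee} = \sgn(w_0)\, a_\beta$.

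The only real computation is the sign of $w_0$. Its inversions are all pairs $i<j$, of which there are $\binom{n}{2}$, so $\sgn(w_0) = (-1)^{n(n-1)/2}$. Alternatively, reversing $n$ columns is a product of $\lfloor n/2\rfloor$ transpositions, and $(-1)^{\lfloor n/2\rfloor} = (-1)^{n(n-1)/2}$, which one checks modulo $4$. Since $\sgn(w_0)^2 = 1$, this gives $a_\lambda(x^{-1}) = a_\beta = (-1)^{n(n-1)/2} a_{\lambda^\vee}$. Finally, $\lambda^\vee$ is weakly decreasing because $\lambda$ is, so $\lambda^\vee \in \SPar_n$ as claimed. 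No step is a serious obstacle; the only point needing care is the sign bookkeeping for the right action in \cref{eq:alpha-pi}, and working with column permutations of the determinant avoids it.
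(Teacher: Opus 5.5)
Your proposal is correct and is the paper's argument: substitute $x_i \mapsto x_i^{-1}$ in the determinant $\det(x_i^{\alpha_j})$ to get $\det(x_i^{-\lambda_j})$, then reverse the column order, which contributes the sign $(-1)^{n(n-1)/2}$. Your reindexing of the antisymmetrization sum and the inversion count for the longest permutation $w_0$ simply make explicit the sign the paper states without comment.
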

    
     \begin{proof}
         It follows from \cref{eq:avocado} that
         \begin{align*}
             a_{\lambda}(x_1^{-1}, \dots, x_n^{-1}) = \det(x_i^{-\lambda_j})_{1\leq i,j\leq n}.
         \end{align*}
         Since
         \begin{align*}
        \det\begin{pmatrix}
            x_1^{-\lambda_1} & x_1^{-\lambda_2} & \dots & x_1^{-\lambda_n} \\
            \vdots & \vdots &   & \vdots\\
            \vdots & \vdots &   & \vdots\\
            x_n^{-\lambda_1} & x_n^{-\lambda_2}&\dots &x_n^{-\lambda_n}
             \end{pmatrix}  = (-1)^{\frac{n(n-1)}{2}}  \det\begin{pmatrix}
                 x_1^{-\lambda_n} &\dots & x_1^{-\lambda_2} & x_1^{-\lambda_1} &\\
                \vdots && \vdots   & \vdots\\
                \vdots && \vdots    & \vdots\\
                x_n^{-\lambda_n} &\dots& x_n^{-\lambda_2}&x_n^{-\lambda_1}
             \end{pmatrix},
        \end{align*}
         the result follows.   
    \end{proof}
    
     \begin{cor}
         In particular, we have 
         \begin{align}
             a_{\delta^n}(x_1^{-1},\dots, x_n^{-1}) = (-1)^{\frac{n(n-1)}{2}}e_{-n}^{n-1}a_{\delta^n}(x_1,\dots,x_n)
         \end{align}
     \end{cor}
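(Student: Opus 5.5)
We apply \cref{lem:tech} with $\lambda = \delta^n = (n-1, n-2, \dots, 0)$ and then relate $a_{(\delta^n)^\vee}$ back to $a_{\delta^n}$ via \cref{cor:ea}. Since $\delta^n \in \SPar_n$, \cref{lem:tech} immediately gives
\[
a_{\delta^n}(x_1^{-1},\dots,x_n^{-1}) = (-1)^{\frac{n(n-1)}{2}} a_{(\delta^n)^\vee}(x_1,\dots,x_n),
\]
where
\[
(\delta^n)^\vee = (-\delta^n_n, \dots, -\delta^n_1) = (0, -1, \dots, -(n-1)).
\]

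Next we note that $(\delta^n)^\vee = \delta^n - ((n-1)^n)$. Indeed, the $j$-th entry of $\delta^n$ is $n-j$, and subtracting $n-1$ gives $1-j$, which is exactly the $j$-th entry of $(\delta^n)^\vee$. Now apply \cref{cor:ea} with $\alpha = (\delta^n)^\vee \in \SPar_n$ and $k = n-1$:
\[
a_{(\delta^n)^\vee} = e_{-n}^{\,n-1}\, a_{(\delta^n)^\vee + ((n-1)^n)} = e_{-n}^{\,n-1}\, a_{\delta^n}.
\]
Substituting this into the first display yields the claim.

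Neither step poses a real obstacle. The only point needing care is the index bookkeeping: checking that $\lambda^\vee$ reverses the order of the entries as well as negating them, so that the shift by $((n-1)^n)$ lands exactly on $\delta^n$ and not on a permutation of it, which would introduce an extra sign. Alternatively, one can verify the identity directly from the product formula in \cref{lem:adelta}. Since $x_i^{-1} - x_j^{-1} = -(x_i - x_j)/(x_i x_j)$, and each $x_k$ appears in exactly $n-1$ of the pairs $i<j$, the product over all pairs gives
\[
\prod_{i<j}\left(x_i^{-1}-x_j^{-1}\right) = (-1)^{\binom{n}{2}}\, e_n^{-(n-1)} \prod_{i<j}(x_i - x_j).
\]
This serves as a sanity check on the sign and on the power of $e_{-n}$.
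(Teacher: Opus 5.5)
Your proof is correct and is essentially the paper's argument: you apply \cref{lem:tech} to $\delta^n$, then \cref{cor:ea} with $k=n-1$, using $((n-1)^n)+(\delta^n)^\vee=\delta^n$. Your product-formula check via $x_i^{-1}-x_j^{-1}=-(x_i-x_j)/(x_ix_j)$ is a valid independent confirmation that the paper does not include.
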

     \begin{proof}
     From \cref{lem:tech}, we have
        \begin{align*}
            a_{\delta^n}(x_1^{-1},\dots, x_n^{-1}) &= (-1) ^{\frac{n(n-1)}{2}} a_{\left(\delta^n\right)^\vee}(x_1,\dots,x_n) \\
            & = (-1) ^{\frac{n(n-1)}{2}} e_{-n}^{n-1} a_{((n - 1)^n) + \left(\delta^n\right)^\vee}(x_1, \dots ,x_n),
        \end{align*} 
    where the last equality follows from \cref{cor:ea}. Since $((n-1)^n)  + \left(\delta^n\right)^\vee = \delta^n$, the claim follows.
     \end{proof}
    
    \begin{prop}\label{prop:relsch}
        Let $\lambda \in \SPar_n$. Then we have
        \begin{align}
            s_\lambda(x_1^{-1}, \dots, x_n^{-1}) = s_{\lambda^\vee}(x_1,\dots,x_n),
        \end{align}
        where $\lambda^\vee$ is as defined in \cref{lem:tech}.
    \end{prop}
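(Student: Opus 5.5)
The plan is to compute directly from the bialternant definition \cref{eq:Laurent-Schur}, using \cref{lem:tech} for both numerator and denominator and then \cref{cor:ea} to absorb the resulting power of $e_{-n}$. First, by definition,
\[
s_\lambda(x_1^{-1},\dots,x_n^{-1}) = \frac{a_{\lambda+\delta^n}(x_1^{-1},\dots,x_n^{-1})}{a_{\delta^n}(x_1^{-1},\dots,x_n^{-1})}.
\]
This quotient is legitimate, since the substitution $x_i\mapsto x_i^{-1}$ is a ring automorphism of $\LPol_n$ and so preserves the identity $a_{\lambda+\delta^n}=s_\lambda a_{\delta^n}$.

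For the numerator, apply \cref{lem:tech} to $\lambda+\delta^n$ (its proof only uses the determinant formula \cref{eq:avocado}, so it applies to any $n$-tuple). This gives
\[
a_{\lambda+\delta^n}(x^{-1}) = (-1)^{\frac{n(n-1)}{2}} a_{(\lambda+\delta^n)^\vee}(x).
\]
The $j$-th entry of $(\lambda+\delta^n)^\vee$ is $-\lambda_{n+1-j}-(j-1)$. Hence
\[
(\lambda+\delta^n)^\vee = \lambda^\vee + \delta^n - ((n-1)^n).
\]
By \cref{cor:ea} (with $k=n-1$, again only an $n$-tuple identity coming from factoring $x_i^{-(n-1)}$ out of each row), this yields
\[
a_{\lambda+\delta^n}(x^{-1}) = (-1)^{\frac{n(n-1)}{2}} e_{-n}^{\,n-1}\, a_{\lambda^\vee+\delta^n}(x).
\]

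For the denominator, the preceding corollary gives
\[
a_{\delta^n}(x^{-1}) = (-1)^{\frac{n(n-1)}{2}} e_{-n}^{n-1} a_{\delta^n}(x).
\]
Taking the quotient, the signs and the powers of $e_{-n}$ cancel, leaving $a_{\lambda^\vee+\delta^n}(x)/a_{\delta^n}(x)=s_{\lambda^\vee}(x)$. This uses that $\lambda^\vee\in\SPar_n$, which holds because $-\lambda_n\ge\dots\ge-\lambda_1$. The cancellation is valid because $\LPol_n$ is an integral domain (\cref{rem:integral}), and by \cref{lem:vandermond}.

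The only delicate step is the index bookkeeping in identifying $(\lambda+\delta^n)^\vee$ with $\lambda^\vee+\delta^n-((n-1)^n)$. I would check this entrywise as above, making sure the reversal of $\delta^n$ produces $-(j-1)=(n-j)-(n-1)$ in position $j$.
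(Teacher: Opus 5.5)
Your proof is correct and follows essentially the same route as the paper: apply \cref{lem:tech} to the numerator and denominator of the bialternant, then use \cref{cor:ea} with $k=n-1$ to shift by $((n-1)^n)$, after which the signs and the powers of $e_{-n}$ cancel. The only difference is cosmetic. The paper writes $(\lambda+\delta^n)^\vee=\lambda^\vee+(\delta^n)^\vee$ and uses $(\delta^n)^\vee+((n-1)^n)=\delta^n$, whereas you do the index bookkeeping entrywise in one step and keep the sign $(-1)^{n(n-1)/2}$ explicit.
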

    \begin{proof}
        In light of \cref{eq:tild_lamb}, we have
        \[
        s_\lambda (x^{-1}) = \frac{a_{\lambda + \delta^n} (x^{-1})}{a_{\delta^n}(x^{-1})} = \frac{a_{\lambda^\vee + \left(\delta^n\right)^\vee}(x)}{a_{\left(\delta^n\right)^\vee}(x)} \overset{(\star)}{=} \frac{e_{-n}^{n - 1}a_{\lambda^\vee + \delta^n}(x)}{e_{-n}^{n -1}a_{\delta^n}(x)}  = s_{\lambda^\vee}(x),
        \]     
        where $(\star)$ follows from \cref{cor:ea}.
    \end{proof}
     \begin{prop}\label{prop:SchurOGB}
         Let $\lambda, \mu \in \SPar_n$. Then the Laurent Schur polynomials $s_{\lambda}, s_{\mu}$ are orthonormal with respect to the pairing \cref{eq:Hall-IP},
         \begin{align}
             \langle s_\lambda, s_\mu \rangle_n=\delta_{\lambda, \mu}, \label{eq:SchurOGB}
         \end{align}
         where \(\delta_{\lambda, \mu}\) is the Kronecker delta, which is equal to \(1\) if \(\lambda = \mu\) and 0 otherwise.
     \end{prop}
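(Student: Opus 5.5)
We plan to compute the constant term directly by factoring the weight $\prod_{i\neq j}(1-x_i/x_j)$ into two Vandermonde determinants. First we check that
\[
\prod_{i\neq j}\left(1-\frac{x_i}{x_j}\right)=\prod_{i<j}\frac{(x_j-x_i)(x_i-x_j)}{x_ix_j}=a_{\delta^n}(x)\,a_{\delta^n}(x^{-1}),
\]
where $x$ and $x^{-1}$ abbreviate $(x_1,\dots,x_n)$ and $(x_1^{-1},\dots,x_n^{-1})$. The second equality holds because $a_{\delta^n}(x^{-1})=\prod_{i<j}(x_i^{-1}-x_j^{-1})=\prod_{i<j}\frac{x_j-x_i}{x_ix_j}$ by \cref{lem:adelta}.

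By \cref{def:schur}, $s_\lambda(x)a_{\delta^n}(x)=a_{\lambda+\delta^n}(x)$, and substituting $x^{-1}$ gives $s_\mu(x^{-1})a_{\delta^n}(x^{-1})=a_{\mu+\delta^n}(x^{-1})$. Therefore
\[
\langle s_\lambda,s_\mu\rangle_n=\frac{1}{n!}\operatorname{CT}\bigl(a_{\lambda+\delta^n}(x)\,a_{\mu+\delta^n}(x^{-1})\bigr).
\]
This route avoids \cref{prop:relsch} and \cref{lem:tech}; substituting $x^{-1}$ directly is cleaner than converting to $\mu^\vee$.

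Next we expand both alternants with \cref{eq:aalpha}. Write $\alpha=\lambda+\delta^n$ and $\beta=\mu+\delta^n$; by \cref{lem:lamdel} both are strictly decreasing. Then
\[
a_\alpha(x)a_\beta(x^{-1})=\sum_{\pi,\sigma\in\gS_n}\sgn(\pi)\sgn(\sigma)\,x^{\alpha^\pi-\beta^\sigma},
\]
and the constant term collects the pairs with $\alpha^\pi=\beta^\sigma$. Since the entries of $\alpha$ are distinct, and likewise those of $\beta$, the equality $\alpha^\pi=\beta^\sigma$ forces $\alpha$ and $\beta$ to be reorderings of one another. Both being strictly decreasing, this happens only when $\alpha=\beta$, i.e.\ $\lambda=\mu$, and then only when $\pi=\sigma$ (\cref{rem:1aalpha}).

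Hence for $\lambda\neq\mu$ the constant term is $0$. For $\lambda=\mu$ it equals $\sum_{\pi}\sgn(\pi)^2=n!$, so $\langle s_\lambda,s_\lambda\rangle_n=1$.

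The only step needing care is the sign bookkeeping in the factorization of the weight, namely confirming that $(1-x_i/x_j)(1-x_j/x_i)=(x_i-x_j)(x_j-x_i)/(x_ix_j)$ reproduces exactly $a_{\delta^n}(x)a_{\delta^n}(x^{-1})$ with no stray sign. We will verify this pair by pair. The rest is the standard orthogonality argument for alternants, which carries over verbatim to signed exponents because $\LPol_n$ has monomials $x^\gamma$ for all $\gamma\in\SComp_n$ as a $\Z$-basis.
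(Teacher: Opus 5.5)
Your proposal is correct and follows essentially the same route as the paper. Both factor the weight as $a_{\delta^n}(x)\,a_{\delta^n}(x^{-1})$, reduce to the constant term of $a_{\lambda+\delta^n}(x)\,a_{\mu+\delta^n}(x^{-1})$, and use the distinct, strictly decreasing entries of $\lambda+\delta^n$ and $\mu+\delta^n$ to show that only $\lambda=\mu$ with equal permutations contributes, giving $n!$.
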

     \begin{proof}
    Recall the Hall inner product from \cref{def:hall}. It suffices to prove that 
    \begin{align}
       \operatorname{CT}\left(s_{\lambda}(x)s_{\mu}(x^{-1})\prod_{i\neq j}(1 -  x_i/x_j)\right) = n!\delta_{\lambda,\mu}
    \end{align}
    The product $\prod_{i\neq j}(1 - x_i/x_j)$ can be written as the product of $P_{i,j}$, for all \(i < j\), where
    \begin{equation*}
        P_{i,j} := \left(1 - \frac{x_i}{x_j}\right)\left(1 - \frac{x_j}{x_i}\right) = \left(\frac{x_j - x_i}{x_j}\right)\left(\frac{x_i - x_j}{x_i}\right) = \left(\frac{x_j - x_i}{x_ix_j}\right)(x_i - x_j) = (x_i - x_j)(x_i^{-1} - x_j^{-1}).
    \end{equation*}
    This allows us to write
    \begin{align}
        \prod_{i \neq j} (1 - x_i/x_j) = \prod_{1\leq i < j \leq n} (x_i - x_j)\prod_{1\leq i < j \leq n} (x_i^{-1} - x_j^{-1}) =  a_{\delta^n}(x_1, \dots,x_n)\,a_{\delta^n}(x_1^{-1},\dots,x_n^{-1}),
    \end{align}
    where the second equality follows from \cref{lem:adelta}. Then,
    \begin{align*}
        \operatorname{CT}\left(s_{\lambda}(x)s_{\mu}(x^{-1})\prod_{i\neq j}(1 -  x_i/x_j)\right) &= \operatorname{CT}\left(s_{\lambda}(x)s_{\mu}(x^{-1})a_{\delta^n}(x)\,a_{\delta^n}(x^{-1})\right) \\ \eqalignref{eq:Laurent-Schur}\operatorname{CT}\left(a_{\lambda +\delta^n}(x)\,a_{\mu + \delta^n}(x^{-1})\right).
    \end{align*}
    From \cref{eq:aalpha}, we have
    \begin{align*}
        a_{\lambda +\delta^n}(x)\,a_{\mu + \delta^n}(x^{-1}) &= \left( \sum_{\sigma \in S_n} \sgn(\sigma)x^{(\lambda + \delta^n)^\sigma} \right)\left( \sum_{\tau \in S_n} \sgn(\tau)x^{-(\mu + \delta^n)^\tau} \right) \\
        &= \sum_{\sigma, \tau\,\in S_n} \sgn(\sigma)\sgn(\tau )x^{(\lambda + \delta^n)^\sigma - (\mu + \delta^n)^\tau}.
    \end{align*}
    For the constant term of the Laurent polynomial $a_{\lambda +\delta^n}(x)\,a_{\mu + \delta^n}(x^{-1})$, we look at all \(\sigma, \tau \in S_n\) such that \[(\lambda + \delta^n)^\sigma - (\mu + \delta^n)^\tau = 0.\]
    Then $(\lambda + \delta^n)^{\sigma \circ \tau^{-1}} =(\mu + \delta^n)$. As $\lambda+\delta^n,\mu+\delta^n \in \SPar_n$, each with no repeated elements, and $\sigma \circ \tau^{-1}$ is a set-wise bijection, $\lambda + \delta^n$ and $\mu + \delta^n$ must have the same parts. Since both are ordered, it follows that $\lambda + \delta^n=\mu + \delta^n$. Then by \cref{lem:lamdel}, $\lambda=\mu$, and so $\sigma \circ \tau^{-1}$ must be the identity, and $\sigma=\tau$. Thus, if \(\lambda \neq \mu\), we have
    \[
     \operatorname{CT}\left(s_{\lambda}(x)s_{\mu}(x^{-1})\prod_{i\neq j}(1 -  x_i/x_j)\right) = 0.
    \]
    If \(\lambda = \mu \), then by the above discussion, $\sigma = \tau$, and so we have
    \begin{align}
        \operatorname{CT}\left(\sum_{\sigma,\nu \in S_n} \sgn(\sigma)\sgn( \nu)x^{(\lambda + \delta^n)^\sigma - (\mu + \delta^n)^\nu} \right) = \sum_{\sigma \in S_n} \sgn(\sigma)^2 = |S_n| = n!,
    \end{align}
    as desired.
     \end{proof}
As the Laurent Schur polynomials form a basis for $\LSym_n$, the following corollaries hold.
\begin{cor}
    The pairing \cref{eq:Hall-IP} is positive-definite (i.e., $\langle f(x_1, \dots, x_n), f(x_1, \dots, x_n) \rangle_n > 0$, for \(f(x_1,\dots,x_n) \neq 0\)).
\end{cor}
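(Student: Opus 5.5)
The plan is to expand $f$ in the Laurent Schur basis and read off $\langle f,f\rangle_n$ from orthonormality. By \cref{theo:Schur-basis}, any nonzero $f \in \LSym_n$ can be written uniquely as a finite sum
\[
f = \sum_{\lambda \in \SPar_n} c_\lambda s_\lambda, \qquad c_\lambda \in \Z,
\]
with at least one $c_\lambda \neq 0$. (If one wants the statement over $\LSym_{n,\Q}$, the same expansion holds with $c_\lambda \in \Q$, since the $s_\lambda$ are also a $\Q$-basis after scalar extension.)

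Next I use bilinearity of the pairing \cref{eq:Hall-IP}, which is immediate because $\operatorname{CT}$ is linear and $g \mapsto g(x_1^{-1},\dots,x_n^{-1})$ is a ring homomorphism. Expanding both arguments gives
\[
\langle f, f\rangle_n = \sum_{\lambda,\mu \in \SPar_n} c_\lambda c_\mu \langle s_\lambda, s_\mu\rangle_n .
\]
By \cref{prop:SchurOGB}, $\langle s_\lambda,s_\mu\rangle_n=\delta_{\lambda,\mu}$, so the double sum collapses to $\sum_\lambda c_\lambda^2$. This is a sum of squares of real numbers, not all zero, so it is strictly positive.

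I do not expect a real obstacle. The only points needing a sentence are that the sums are finite, so interchanging with $\operatorname{CT}$ is harmless, and that the coefficients are real, so that $c_\lambda^2>0$ whenever $c_\lambda\neq 0$.
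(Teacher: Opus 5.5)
Your proposal is correct and matches the paper's argument. The paper derives this corollary directly from \cref{theo:Schur-basis} and \cref{prop:SchurOGB}: expanding $f=\sum_\lambda c_\lambda s_\lambda$ with integer coefficients not all zero, orthonormality gives $\langle f,f\rangle_n=\sum_\lambda c_\lambda^2>0$, which is exactly what you do.
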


\begin{rem}
     \cref{prop:SchurOGB} also implies the pairing is symmetric, which provides another proof for the fact that the pairing is symmetric, in addition to \cref{prop:bilin-symm}, where it is shown directly. Together with \cref{prop:bilin-symm}, we have shown the pairing \cref{eq:Hall-IP} is indeed an inner product, as promised.
\end{rem}

\begin{cor}
    The Hall inner product \cref{eq:Hall-IP} is a map \(\LSym_n \times \LSym_n \to \Z\). 
\end{cor}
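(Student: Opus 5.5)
The claim is that $\langle f,g\rangle_n\in\Z$ for all $f,g\in\LSym_n$. Although \cref{def:hall} carries the factor $\frac{1}{n!}$, the idea is to avoid analysing constant terms directly and instead expand both arguments in the Laurent Schur basis. The orthonormality relation \cref{eq:SchurOGB} then forces the pairing to be an integer.

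First, by \cref{theo:Schur-basis}, write
\[
f=\sum_{\lambda\in\SPar_n} c_\lambda s_\lambda,\qquad g=\sum_{\mu\in\SPar_n} d_\mu s_\mu,
\]
where $c_\lambda,d_\mu\in\Z$ and only finitely many coefficients are nonzero. The basis is a $\Z$-basis, so these coefficients are integers and not merely rationals.

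Second, the pairing is $\Z$-bilinear, since $\operatorname{CT}$ is additive and $g\mapsto g(x_1^{-1},\dots,x_n^{-1})$ is additive. Expanding gives
\[
\langle f,g\rangle_n=\sum_{\lambda,\mu}c_\lambda d_\mu\langle s_\lambda,s_\mu\rangle_n .
\]

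Third, apply \cref{prop:SchurOGB}: each term $\langle s_\lambda,s_\mu\rangle_n$ equals $\delta_{\lambda,\mu}$. Hence
\[
\langle f,g\rangle_n=\sum_{\lambda} c_\lambda d_\lambda\in\Z,
\]
which proves the claim.

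There is no real obstacle here. The only point needing care is that the expansion coefficients are integral, which \cref{theo:Schur-basis} guarantees because it gives a $\Z$-basis. This is also why the $\frac{1}{n!}$ in the definition causes no denominators: integrality comes from the basis, not from the constant-term formula itself.
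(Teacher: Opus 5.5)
Your proof is correct and follows the paper's reasoning. The paper states this corollary as an immediate consequence of \cref{theo:Schur-basis} and \cref{prop:SchurOGB}: expand $f$ and $g$ in the Laurent Schur $\Z$-basis and use orthonormality to get $\langle f,g\rangle_n=\sum_\lambda c_\lambda d_\lambda\in\Z$.
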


\begin{cor}
    For all \(d,r \in \Z\) we have 
    \begin{align}
        \langle e_{n}^d, e_{n}^r\rangle_n \, = \delta_{dr}.
    \end{align} 
\end{cor}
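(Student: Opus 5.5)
The plan is to identify each power $e_n^d$ with a single Laurent Schur polynomial and then apply \cref{prop:SchurOGB}. For every $d \in \Z$ we will check that
\[
e_n^d = s_{(d^n)}(x_1,\dots,x_n),
\]
where $(d^n) = (d,\dots,d) \in \SPar_n$. This follows from \cref{prop:schuren} applied to $\lambda = (d^n)$ with $k = d$: it gives $s_{(d^n)} = e_{-n}^{d}\, s_{(2d)^n}$, which is not yet what we want. The cleaner route is to use $\lambda = (0^n)$, the empty signed partition, together with $k = -d$. Then \cref{eq:schuren} reads
\[
s_{(0^n)} = e_{-n}^{-d}\, s_{((-d)^n)} = e_n^{d}\, s_{((-d)^n)} .
\]
Since $s_{(0^n)} = a_{\delta^n}/a_{\delta^n} = 1$, we obtain $s_{((-d)^n)} = e_n^{-d}$ for all $d\in\Z$. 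Equivalently, $s_{(d^n)} = e_n^d$. An even more direct check is also available: \cref{cor:ea} gives $a_{(d^n)+\delta^n} = e_n^d\, a_{\delta^n}$, and dividing by $a_{\delta^n}$ (permissible by \cref{lem:vandermond} and \cref{rem:integral}) yields the identity.

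With this identity in hand, \cref{prop:SchurOGB} gives
\[
\langle e_n^d, e_n^r\rangle_n = \langle s_{(d^n)}, s_{(r^n)}\rangle_n = \delta_{(d^n),(r^n)}.
\]
Finally, $(d^n) = (r^n)$ as signed partitions if and only if $d = r$. This holds because both are constant $n$-tuples with $n \geq 1$, and the $n$-tuple representation of an element of $\SPar_n$ is unique. Hence $\delta_{(d^n),(r^n)} = \delta_{dr}$.

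None of these steps is a serious obstacle. The only point requiring care is the identification $s_{(d^n)} = e_n^d$ when $d$ is negative. There the relevant element of $\SPar_n$ is $((\,),(|d|^n))$, and we must confirm it is the $n$-tuple $(d,\dots,d)$ under the conventions of \cref{def:SPar}, which it is. The determinant identity $\det(x_i^{d+n-j}) = (x_1\cdots x_n)^d \det(x_i^{n-j})$ then holds verbatim for all $d\in\Z$, obtained by factoring $x_i^d$ out of row $i$.
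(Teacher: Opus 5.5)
Your proof is correct and takes the same route as the paper: you identify $e_n^d = s_{(d^n)}$ using \cref{prop:schuren} together with $s_{(0^n)} = 1$, then read off $\langle e_n^d, e_n^r\rangle_n = \delta_{dr}$ from the orthonormality in \cref{prop:SchurOGB}. You also spell out two details the paper leaves implicit: that $(d^n) = (r^n)$ exactly when $d = r$, and that the identity holds for negative $d$.
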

\begin{proof}
    Since \(s_{0} = 1\) by \cref{prop:schuren}, we have \(e_n^k = s_{(k^n)}\).
\end{proof}

\begin{defin}
    Let
\begin{align}
    s_\lambda(x) = \sum_{\mu \in \SPar_n} K_{\lambda,\mu} m_\mu (x).
\end{align}
The $K_{\lambda,\mu}$ are called \emph{generalized Kostka numbers}.
This sum is finite, since Laurent monomial symmetric polynomials form a basis of $\LSym_n$.

\end{defin}
Recall $\lambda^\vee := (-\lambda_{n},\dots,-\lambda_{1})$ from \cref{lem:tech}.
Consider the ring homomorphism
\begin{align}
    \LPol_n \longrightarrow \LPol_n, \quad x_i\longmapsto x_i ^{-1}.\label{eq:gamma}
\end{align}

\begin{prop}\label{prop:gamma}
Let $\Gamma$ denote the restriction of \cref{eq:gamma} to $\LSym_n$. Then the following hold:
    \begin{enumerate}
        \item $\Gamma(s_{\lambda}(x)) = s_{\lambda^\vee}(x)$, \label{prop:gamma-a}
        \item $\Gamma(m_{\lambda}(x)) = m_{\lambda^\vee}(x)$,\label{prop:gamma-b}
        \item $\Gamma$ is an involution,\label{prop:gamma-c}
        \item $\Gamma$ is an isometry.\label{prop:gamma-d}
    \end{enumerate}
   Here, $x$ is a shorthand for $x_1, \dots, x_n$. 
\end{prop}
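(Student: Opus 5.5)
First I check that $\Gamma$ is well defined as a map $\LSym_n \to \LSym_n$. The substitution $x_i \mapsto x_i^{-1}$ is a ring endomorphism of $\LPol_n$, and it commutes with the $\gS_n$-action: for $\pi\in\gS_n$, both $\pi(\Gamma f)$ and $\Gamma(\pi f)$ equal $f(x_{\pi(1)}^{-1},\dots,x_{\pi(n)}^{-1})$. Hence it maps $\LSym_n$ into itself, and it sends $\LSym_n^k$ to $\LSym_n^{-k}$.

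\textbf{Part (a).} This is exactly \cref{prop:relsch}.

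\textbf{Part (b).} We have $\Gamma(x^\beta)=x^{-\beta}$ for each $\beta\in\SComp_n$. Therefore
\[
\Gamma(m_\lambda)=\sum_{\beta\in\lambda\gS_n}x^{-\beta}.
\]
The map $\beta\mapsto-\beta$ is a bijection from $\lambda\gS_n$ onto $(-\lambda)\gS_n$, since $(-\lambda)^\pi=-(\lambda^\pi)$. The composition $-\lambda=(-\lambda_1,\dots,-\lambda_n)$ is a reordering of $\lambda^\vee=(-\lambda_n,\dots,-\lambda_1)$, so $(-\lambda)\gS_n=\lambda^\vee\gS_n$. This gives $\Gamma(m_\lambda)=m_{\lambda^\vee}$.

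\textbf{Part (c).} Clearly $\Gamma\circ\Gamma$ sends each $x_i$ to $x_i$, so it is the identity on $\LPol_n$ and hence on $\LSym_n$. Equivalently, $(\lambda^\vee)^\vee=\lambda$ together with (a) shows $\Gamma^2$ fixes the Schur basis.

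\textbf{Part (d).} I would work on the basis of Laurent Schur polynomials from \cref{theo:Schur-basis}. By bilinearity it suffices to check
\[
\langle \Gamma s_\lambda,\Gamma s_\mu\rangle_n=\langle s_\lambda,s_\mu\rangle_n .
\]
By (a) the left side is $\langle s_{\lambda^\vee},s_{\mu^\vee}\rangle_n$. By \cref{prop:SchurOGB} this equals $\delta_{\lambda^\vee,\mu^\vee}$. Since $\vee$ is injective (it is an involution), this is $\delta_{\lambda,\mu}=\langle s_\lambda,s_\mu\rangle_n$.

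There is also a direct check from \cref{def:hall}: substituting $x\mapsto x^{-1}$ preserves constant terms, and the factor $\prod_{i\neq j}(1-x_i/x_j)$ is invariant under this substitution, as noted in the proof of \cref{prop:bilin-symm}.

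There is no real obstacle here. The only point needing care is the orbit identification in (b), namely that $-\lambda$ and $\lambda^\vee$ lie in the same $\gS_n$-orbit, together with correctly citing \cref{prop:relsch} for (a).
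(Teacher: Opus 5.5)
Your proposal is correct and follows the paper's proof essentially step for step: (a) is cited from \cref{prop:relsch}, (b) negates exponents across the orbit (your explicit check that $-\lambda$ and $\lambda^\vee$ lie in the same $\gS_n$-orbit just spells out the paper's one-line computation), (c) is immediate, and (d) uses (a) to send the orthonormal Schur basis to an orthonormal basis. Your additional check that $\Gamma$ preserves $\LSym_n$ and your alternative constant-term argument for (d) are both valid.
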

\begin{proof}
     Part \cref{prop:gamma-a} is a restatement of \cref{prop:relsch}. For \cref{prop:gamma-b}, recall by definition,
    \begin{align*}
    m_\lambda(x_1, \dots, x_n) =\sum_{\beta \in \lambda \mathfrak{S}_n} x^\beta, \quad \lambda \in \SPar_n(k).
\end{align*}
Then \[\Gamma(m_\lambda(x_1, \dots, x_n))=\sum_{\beta \in \lambda \mathfrak{S}_n} x^{-\beta}=\sum_{-\beta \in \lambda \mathfrak{S}_n} x^{\beta}=\sum_{\beta \in \lambda^\vee \mathfrak{S}_n} x^{\beta}.\] \cref{prop:gamma-c} is clear.  Finally, \cref{prop:gamma-d} follows from \cref{prop:gamma-a} since it shows that the image of an orthonormal basis is an orthonormal basis. 
\end{proof}

\begin{cor}
   Under $s_\lambda \mapsto s_{\lambda^\vee}$ and $m_\mu \mapsto m_{\mu^\vee}$, where $\lambda^\vee, \mu^\vee$ are as defined in \cref{lem:tech}, we have that \(K_{\lambda,{\mu}}=K_{\lambda^\vee,\mu^\vee}\).
\end{cor}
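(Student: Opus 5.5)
The plan is to apply the involution $\Gamma$ of \cref{prop:gamma} to the defining expansion of $s_\lambda$ and then compare coefficients in the monomial basis. Start from
\[
s_\lambda(x) = \sum_{\mu \in \SPar_n} K_{\lambda,\mu}\, m_\mu(x),
\]
a finite sum, and apply the ring homomorphism $\Gamma$. Since $\Gamma$ is additive and $\Z$-linear, it passes through the sum. By \cref{prop:gamma-a} the left side becomes $s_{\lambda^\vee}(x)$, and by \cref{prop:gamma-b} each $m_\mu$ becomes $m_{\mu^\vee}$. This gives
\[
s_{\lambda^\vee}(x) = \sum_{\mu \in \SPar_n} K_{\lambda,\mu}\, m_{\mu^\vee}(x).
\]

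Next we reindex the sum. The map $\mu \mapsto \mu^\vee$ is a bijection of $\SPar_n$ onto itself, being its own inverse: $(\mu^\vee)^\vee = \mu$ follows directly from the definition in \cref{lem:tech}. It also sends $\SPar_n(k)$ to $\SPar_n(-k)$. Substituting $\nu = \mu^\vee$ yields
\[
s_{\lambda^\vee} = \sum_{\nu} K_{\lambda,\nu^\vee}\, m_\nu .
\]
On the other hand, by definition $s_{\lambda^\vee} = \sum_\nu K_{\lambda^\vee,\nu}\, m_\nu$.

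Since the Laurent monomial symmetric polynomials form a $\Z$-basis of each graded piece, and hence of $\LSym_n$ by \cref{eq:LauGrad}, expansions in this basis are unique. Comparing coefficients gives $K_{\lambda^\vee,\nu} = K_{\lambda,\nu^\vee}$ for every $\nu$. Setting $\nu = \mu^\vee$ gives the claim $K_{\lambda,\mu} = K_{\lambda^\vee,\mu^\vee}$.

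None of these steps is a real obstacle. The only points needing care are that $\vee$ preserves $\SPar_n$ (that is, the reversed, negated tuple is again weakly decreasing) and that uniqueness of coefficients is taken in the full basis $\{m_\mu\}_{\mu\in\SPar_n}$ of $\LSym_n$ rather than in a single degree. The latter is harmless, since $\Gamma$ maps degree $k$ to degree $-k$.
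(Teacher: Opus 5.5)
Your proof is correct and is the argument the paper intends. The paper gives no separate proof and treats the corollary as an immediate consequence of \cref{prop:gamma}: apply $\Gamma$ to $s_\lambda=\sum_\mu K_{\lambda,\mu}m_\mu$ using parts (a) and (b), reindex via the involution $\mu\mapsto\mu^\vee$ of $\SPar_n$, and compare coefficients in the monomial basis of $\LSym_n$, with your remarks supplying the small details the paper leaves implicit.
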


\section{Laurent symmetric functions} \label{sec:lsf}

\begin{defin}
    For an alphabet $X= (X_1, X_2, \dots) $, \emph{$\Lambda(X) $} denotes the \emph{ring of symmetric functions} in the variables $X_1, X_2 , \dots $.
\end{defin}

We use the term \textit{functions} to distinguish this ring from the ring of symmetric \textit{polynomials} in finitely many variables. 

\begin{defin}
    Let $X = (X_1, X_2, \dots) $ and $Y=(Y_1 , Y_2, \dots )$ be two independent alphabets. The ring of \emph{Laurent symmetric functions} in infinitely many variables is
    \begin{align}
        \LSym := \Lambda(X) \otimes_\Z \Lambda (Y)
    \end{align}
    
\end{defin}

\begin{rem}
    Since the tensor product commutes with direct sums, we have
    \begin{align}
        \Lambda(X) \otimes_\Z \Lambda (Y) = \bigoplus_{k,\ell \in \N} \Lambda^k (X) \otimes_\Z \Lambda^\ell (Y).
    \end{align}
The submodules \(\Lambda^k (X) \otimes_\Z \Lambda^\ell (Y)\) are of finite rank and yield a \(\N^2\) grading of \(\LSym\). We can use this to construct a $\Z$-grading
\[
\Lambda(X) \otimes_\Z \Lambda (Y) = \bigoplus_{r \in \Z} \bigoplus_{\substack{k - \ell = r\\ k,\ell \in \N}} \Lambda^k (X) \otimes_\Z \Lambda^\ell (Y),
\]
where the submodules of degree $r$ are \(\bigoplus_{\substack{k - \ell = r\\ k,\ell \in \N}} \Lambda^k (X) \otimes_\Z \Lambda^\ell (Y)\). Note that in this case the graded components are abelian groups of infinite rank.
\end{rem}

We extend the elementary symmetric functions $e_r(x_1, x_2, \dots)$ defined in \cite[Ch.~I, \S2, p.~19]{Mac15} to Laurent symmetric functions as follows.
\begin{defin} \label{def:er_xy}
    For $r \in \Z$ we define the \emph{Laurent elementary symmetric functions} in infinite variables as follows. 
    \begin{align*}
        e_r &= e_r(X), \qquad r> 0, \\
        e_{r} &= e_{-r}(Y), \qquad r< 0, \\
        e_0 &= 1,
    \end{align*}
    where $e_r(X), e_r(Y)$ for $r>0$ denote the elementary symmetric function in the variables $X_1, X_2, \dots $, $Y_1, Y_2, \dots$.
\end{defin}
By the fundamental theorem of symmetric functions
\cite[Ch.~I, \S2]{Mac15}, we have
$\Lambda(X)=\Z[e_1(X),e_2(X),\dots]$,
where $e_1(X),e_2(X),\dots$ are algebraically independent.
 It follows that
\begin{prop}\label{prop:ebasis}
    We have
    \begin{align*}
        \LSym = \Z[ e_1(X),e_2(X),\dots,e_1(Y),e_2(Y),\dots ],
    \end{align*}
where the elements \(e_1(X),e_2(X),\dots,e_1(Y),e_2(Y),\dots\) are algebraically independent.
With the notation of \cref{def:er_xy} we have
\begin{align}
    \LSym = \Z[ e_{\pm 1},e_{\pm 2},\dots]
\end{align}
\end{prop}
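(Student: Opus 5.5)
The plan is to reduce the statement to the fundamental theorem of symmetric functions for each alphabet separately, combined with the standard fact that the tensor product over $\Z$ of two polynomial rings is a polynomial ring in the union of the variables. Concretely, we know $\Lambda(X)=\Z[e_1(X),e_2(X),\dots]$ with the $e_r(X)$ algebraically independent, so there is a ring isomorphism $\Z[T_1,T_2,\dots]\to\Lambda(X)$, $T_r\mapsto e_r(X)$, and likewise $\Z[U_1,U_2,\dots]\to\Lambda(Y)$, $U_r\mapsto e_r(Y)$. Tensoring these isomorphisms gives a ring isomorphism
\[
\Z[T_1,T_2,\dots]\otimes_\Z \Z[U_1,U_2,\dots]\longrightarrow \Lambda(X)\otimes_\Z\Lambda(Y)=\LSym .
\]

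The key step is to identify the left-hand side with $\Z[T_1,T_2,\dots,U_1,U_2,\dots]$. I would do this at the level of $\Z$-bases. The monomials $T^a$ ($a$ a finitely supported sequence in $\N$) form a $\Z$-basis of the first factor, and the monomials $U^b$ form a basis of the second. The tensor product of free modules is free on the pure tensors $T^a\otimes U^b$, so these form a basis of the tensor product. The multiplication on the tensor product is $(T^a\otimes U^b)(T^{a'}\otimes U^{b'})=T^{a+a'}\otimes U^{b+b'}$. Hence the $\Z$-linear map $T^a\otimes U^b\mapsto T^aU^b$ is a bijection between bases and is multiplicative, so it is a ring isomorphism.

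Composing the two isomorphisms, the ring $\LSym$ is generated by the images $e_r(X)\otimes 1$ and $1\otimes e_r(Y)$, which we identify with $e_r(X)$ and $e_r(Y)$. The monomials in these elements are $\Z$-linearly independent, because they are the images of a basis under an isomorphism; this is exactly algebraic independence. The final display then follows by renaming via \cref{def:er_xy}: $e_r=e_r(X)$ and $e_{-r}=e_r(Y)$ for $r>0$.

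The main point needing care is the identification of the tensor product of polynomial rings with the polynomial ring on the disjoint union of variables, in particular that no relations appear. Working with explicit free $\Z$-bases, as above, avoids flatness issues, since both factors are free $\Z$-modules. Everything else is a direct invocation of the fundamental theorem.
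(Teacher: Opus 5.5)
Your proposal is correct and follows the same route as the paper. The paper applies the fundamental theorem of symmetric functions to each of $\Lambda(X)$ and $\Lambda(Y)$ and states that the result ``follows''; your explicit basis argument identifying $\Z[T_1,T_2,\dots]\otimes_\Z\Z[U_1,U_2,\dots]$ with $\Z[T_1,T_2,\dots,U_1,U_2,\dots]$ fills in the step the paper leaves implicit.
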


Similarly, we extend the definitions of the complete homogeneous symmetric functions $h_r$ and the power sums $p_r$ in \cite[Ch.~I, \S2 pp.21-23]{Mac15} to the Laurent symmetric functions. 
\begin{defin}\label{def:hr_pr}
    For $r \in \Z$, we define 
    \begin{align}
        h_r = \begin{cases}
            h_r(X), & r > 0, \\
            h_{-r}(Y), & r<0, \\
            1, &  r=0,
        \end{cases}
    \end{align}
    and 
    \begin{align}
        p_r = \begin{cases}
            p_r(X), &r > 0, \\
            p_{-r}(Y), & r<0.
        \end{cases}
    \end{align}
\end{defin}

Note we do not define \(p_0\).

\begin{defin}\label{def:rho}
    For $n \geq 1$, we define a ring homomorphism
    \begin{align*}
        \rho_n \colon \LSym \rightarrow \LSym_n
    \end{align*} 
    on the generators $e_r$ by
    \begin{align}
        \rho_n(e_r(X)) &= e_r(x_1, \dots , x_n), \label{eq:eX}
    \end{align}
    and
    \begin{align}
        \rho_n (e_r(Y)) = e_r(x_1^{-1}, \dots, x_n^{-1} ) \overset{(\star)}{=}  \begin{cases}
            e_{n-r}(x_1,\dots,x_n)e_n(x_1,\dots,x_n)^{-1},
                & 0\leq r\leq n,\\
            0. & r>n
        \end{cases} \label{eq:eY}
        \end{align}
        Since $e_r(X), e_r(Y)$ freely generate $\LSym$ as a $\Z$-algebra, \cref{eq:eX} and \cref{eq:eY} together define a unique ring homomorphism. 
\end{defin}

\begin{rem}
    The equality $(\star)$ follows from the definition that 
    \begin{align*}
        e_{n-r}(x_1, \dots, x_n)=\displaystyle\sum_{1 \leq i_1 < \cdots < i_{n-r} \leq n}x_{i_1}\cdots x_{i_{n-r}},
    \end{align*}
    and $e_n(x_1, \dots, x_n)=x_1\cdots x_n$, so we have
    \begin{align*}
        e_{n-r}(x_1,\dots,x_n)e_n(x_1,\dots,x_n)^{-1} 
        &= \left( \sum_{1 \leq i_1 < \cdots < i_{n-r} \leq n}x_{i_1}\cdots x_{i_{n-r}} \right)(x_1\cdots x_n)^{-1} \\
        &= \sum_{1 \leq i_1 < \cdots <  i_{r}\leq n} x_{i_1}^{-1}\cdots x_{i_r}^{-1} \\
        &= e_r(x_1^{-1}, \dots, x_n^{-1}).
    \end{align*}
\end{rem}

The alphabets $X$ and $Y$ are independent in the ring of Laurent symmetric \textit{functions}. Under the specialization to the finite variable case, however, they get mapped to $x_i$ and $x_i^{-1}$ respectively and thus become inverses to each other.

Since we are working with symmetric functions alongside symmetric polynomials, hereafter, we will denote elementary symmetric polynomials $e_r(x_1, \dots, x_n)$ by \emph{$e_r^{(n)}$}, to distinguish them from elementary symmetric functions, which we will denote by \emph{$e_r$}. 

\begin{defin}
For $r$ such that $0 \leq r \leq n$, let
\begin{align}
         e_{-r}^{(n)} : = e_r^{(n)} (x^{-1}). \label{eq:x-absent}
\end{align}
\end{defin}

It is desirable to have a description of \(\LSym\) as an inverse limit relating it to the finite variable case.

\begin{defin}
     For $d\geq 1$, set 
\begin{align*}
    {A_d }:= \Z[e_1(X) , \dots, e_d(X), e_1 (Y) , \dots, e_d(Y)] \subseteq \LSym.
\end{align*}
\end{defin}

\begin{prop}
    For $d \geq 1$ and $n \geq 2d +1$, 
    \begin{align*}
        \rho_n|_{A_d}\colon A_d\longrightarrow \LSym_n
    \end{align*}
    is injective.
\end{prop}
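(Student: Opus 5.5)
The plan is to reduce injectivity of $\rho_n|_{A_d}$ to the algebraic independence of $e_1^{(n)},\dots,e_n^{(n)}$ from \cref{prop:ftsp}. By \cref{prop:ebasis}, the ring $A_d$ is a polynomial ring over $\Z$ in the $2d$ algebraically independent elements $e_1(X),\dots,e_d(X),e_1(Y),\dots,e_d(Y)$. So it suffices to show that their images under $\rho_n$ are algebraically independent in $\LSym_n$.

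By \cref{eq:eX} and \cref{eq:eY}, these images are
\[
e_1^{(n)},\dots,e_d^{(n)},\qquad e_{n-1}^{(n)}e_{-n}^{(n)},\ \dots,\ e_{n-d}^{(n)}e_{-n}^{(n)},
\]
since $r\le d<n$. The hypothesis $n\ge 2d+1$ is used exactly here. It gives $n-d\ge d+1$, so the index sets $\{1,\dots,d\}$, $\{n-d,\dots,n-1\}$ and $\{n\}$ are pairwise disjoint. Consequently the images involve $2d+1$ distinct elementary symmetric polynomials $e_i^{(n)}$, together with $e_{-n}^{(n)}=(e_n^{(n)})^{-1}$.

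Now I would clear denominators, as in the proof of \cref{theo:gene_n}. Suppose $P(T_1,\dots,T_d,U_1,\dots,U_d)\in\Z[T,U]$ satisfies
\[
P\bigl(e_1^{(n)},\dots,e_d^{(n)},\,e_{n-1}^{(n)}/e_n^{(n)},\dots,e_{n-d}^{(n)}/e_n^{(n)}\bigr)=0 .
\]
Let $N$ be the total degree of $P$ in the $U$-variables, and work with independent variables $W_1,\dots,W_n$. Define
\[
Q(W_1,\dots,W_n)=W_n^N\,P\bigl(W_1,\dots,W_d,\,W_{n-1}/W_n,\dots,W_{n-d}/W_n\bigr).
\]
This is a genuine polynomial in $\Z[W_1,\dots,W_n]$, and $Q(e_1^{(n)},\dots,e_n^{(n)})=(e_n^{(n)})^N\cdot 0=0$. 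By \cref{prop:ftsp}(a) we get $Q=0$ in $\Z[W_1,\dots,W_n]$.

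Finally I would recover $P=0$ from $Q=0$. Because the indices are disjoint, $W_1,\dots,W_d,W_{n-d},\dots,W_{n-1},W_n$ are distinct variables. Specializing $W_n=1$ and renaming $W_i\mapsto T_i$ for $i\le d$ and $W_{n-i}\mapsto U_i$ for $i\le d$ turns $Q$ back into $P$. Hence $P=0$. Equivalently, one can view $Q$ as $P$ composed with the injective localization map $\Z[T,U]\hookrightarrow \Z[W][W_n^{-1}]$ followed by multiplication by a unit.

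The only delicate point I expect is bookkeeping: making sure no $U_i$ collides with a $T_j$ or with $W_n$. This is precisely where $n\ge 2d+1$ (rather than $n\ge 2d$) is needed. If $n=2d$, then $e_{n-d}=e_d$ would appear on both sides, and injectivity could genuinely fail.
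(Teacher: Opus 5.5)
Your proof is correct and is essentially the paper's argument. The paper substitutes into $f$ and notes that, because $n-d\geq d+1$, the resulting products of powers of $e_1^{(n)},\dots,e_d^{(n)},e_{n-d}^{(n)},\dots,e_{n-1}^{(n)}$ and $\bigl(e_n^{(n)}\bigr)^{-1}$ are distinct members of the Laurent basis of \cref{cor:FTLSP}; that corollary's proof is exactly your clearing of denominators followed by \cref{prop:ftsp}.

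One correction to your closing remark: the bound $n\geq 2d+1$ is needed by this proof, not by the statement. For $n=2d$ the images still generate the field $\Q(e_1^{(n)},\dots,e_{2d}^{(n)})$, since dividing $e_d^{(n)}$ by the image $e_d^{(n)}/e_{2d}^{(n)}$ of $e_d(Y)$ recovers $e_{2d}^{(n)}$. Hence the images are algebraically independent (for example $x_1+x_2$ and $x_1^{-1}+x_2^{-1}$ when $d=1$), and injectivity holds exactly when $n\geq 2d$.
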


\begin{proof}
    Since \(e_1(X),\dots,e_d(X),e_1 (Y),\dots,e_d(Y)\) are algebraically independent in \(\LSym\), to show \(\rho_n|_{A_d}\) is injective, it suffices to check that, if \(n \geq 2d + 1\), the set
    \[\{\rho_n(e_1(X)),\dots,\rho_n(e_d (X)),\rho_n (e_1 (Y)),\dots,\rho_n  (e_d(Y))\}=\left\{e_1^{(n)},\dots, e_d^{(n)}, e_{n- 1}^{(n)} /e_n^{(n)},\dots,  e_{n - d}^{(n)}/e_n^{(n)} \right\}\] is algebraically independent in \(\LSym_n\).

    Let \(f(T_1, \dots, T_{2d}) \in \Z [T_1,\dots,T_{2d}]\) be such that \(f(e_1^{(n)},\dots, e_d^{(n)},e_{n - 1}^{(n)}/e_n^{(n)},\dots,e_{n -d}^{(n)}/e_n^{(n)}) = 0\). If we express \(f(T_1, \dots, T_{2d})\) as a finite sum
    \[
    f(T_1, \dots, T_{2d}) =\sum_{\alpha \in \N^{2d}} c_\alpha T_1^{\alpha_1} \cdots T_{2d}^{\alpha_{2d}}, \quad c_\alpha \in \Z,
    \]
    we have that
    \begin{align*}
    0 
    &= \sum_{\alpha \in \N^{2d}} c_\alpha \left(e_1^{(n)}\right)^{\alpha_1} \cdots \left(e_d^{(n)}\right) ^{\alpha_{d}} \left(\frac{e_{n -1}^{(n)}}{e_n^{(n)}}\right)^{\alpha_{d + 1}} \cdots \left(\frac{e_{n -d}^{(n)}}{e_n^{(n)}}\right)^{\alpha_{2d}}\\
    &=\sum_{\alpha \in \N^{2d}} c_\alpha \left(e_1^{(n)}\right) ^{\alpha_1} \cdots \left(e_d^{(n)}\right) ^{\alpha_{d}} \left(e_{n -1}^{(n)}\right) ^{\alpha_{d + 1}} \cdots \left(e_{n -d}^{(n)}\right) ^{\alpha_{2d}} \left(e_n^{(n)}\right) ^{-(\alpha_{d + 1} + \cdots \alpha_{2d})}.
    \end{align*}
    Because \(n  - d\geq d + 1\), the set
    \begin{align}
        \{\left(e_1^{(n)}\right) ^{\alpha_1} \cdots \left(e_d^{(n)}\right) ^{\alpha_{d}} \left(e_{n -1}^{(n)}\right) ^{\alpha_{d + 1}} \cdots \left(e_{n -d}^{(n)}\right) ^{\alpha_{2d}} \left(e_n^{(n)}\right) ^{-(\alpha_{d + 1} + \cdots \alpha_{2d})} \mid (\alpha_1,\dots,\alpha_{2d}) \in \N^{2d}\} \label{eq:elt-li}
    \end{align}
    is a subset of the basis $\cref{eq:elt-basis}$ in \cref{cor:FTLSP}. Thus, the set \cref{eq:elt-li} is linearly independent. As a consequence,  \(c_\alpha = 0\) for all \(\alpha \in \N^{2d}\). We conclude that the polynomial \(f(T_1, \dots, T_{2d})\) is the zero polynomial. 
\end{proof}

\begin{cor} \label{cor:giraffe}
For $n,d \in \N_{\geq 1}$, we denote by $A_{d,n}$ the image of $\rho_n|_{A_d}$ in $\LSym_n$. Then, \(A_{d,n} = \Z [e_{\pm1}^{(n)},\dots,e_{\pm d}^{(n)}] \subseteq \LSym_n\) and
    \begin{align*}
    \rho_n|_{A_d} \colon A_d \longrightarrow A_{d,n}
\end{align*}
is an isomorphism for \(n \geq 2d + 1\). 
\end{cor}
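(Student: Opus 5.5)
We need to prove \cref{cor:giraffe}: $A_{d,n}=\Z[e^{(n)}_{\pm1},\dots,e^{(n)}_{\pm d}]$, and $\rho_n|_{A_d}\colon A_d\to A_{d,n}$ is an isomorphism for $n\ge 2d+1$.

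For the description of the image, note that $\rho_n$ is a ring homomorphism and $A_d$ is generated as a $\Z$-algebra by $e_1(X),\dots,e_d(X),e_1(Y),\dots,e_d(Y)$. Hence the image of $A_d$ is the subring generated by their images. By \cref{def:rho}, $\rho_n(e_r(X))=e_r^{(n)}$. Also $\rho_n(e_r(Y))=e_r(x_1^{-1},\dots,x_n^{-1})$, which is $e_{-r}^{(n)}$ by \cref{eq:x-absent} when $r\le n$. If $r>n$ (only possible when $n<d$), then $\rho_n(e_r(Y))=0$.

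In that degenerate case we read $e^{(n)}_{\pm r}$ as $0$ for $r>n$, consistent with the convention $e_r(x_1,\dots,x_n)=0$ for $r>n$. With this convention $A_{d,n}=\Z[e_{\pm1}^{(n)},\dots,e_{\pm d}^{(n)}]$ holds for every $n,d\ge1$. This is simply the general fact that the image of a subring generated by a set is the subring generated by the image of that set.

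For the isomorphism statement, $\rho_n|_{A_d}\colon A_d\to A_{d,n}$ is surjective by the definition of $A_{d,n}$ as the image. It is injective by the preceding proposition, which gives injectivity of $\rho_n|_{A_d}$ into $\LSym_n$ whenever $n\ge 2d+1$. A bijective ring homomorphism is a ring isomorphism, which completes the proof.

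No real obstacle arises. The only point needing care is the convention for $e^{(n)}_{\pm r}$ when $r>n$, which matters only outside the range $n\ge 2d+1$. In that range $d<n$, so all $e^{(n)}_{\pm r}$ with $r\le d$ are genuine elements given by \cref{eq:x-absent}.
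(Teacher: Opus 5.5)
Your proposal is correct and matches the paper's implicit argument. The paper gives no separate proof: the image of \(A_d\) is generated by \(\rho_n(e_r(X)) = e_r^{(n)}\) and \(\rho_n(e_r(Y)) = e_{-r}^{(n)}\), and injectivity for \(n \geq 2d+1\) comes from the preceding proposition, so the corestriction to the image is an isomorphism. Your remark about reading \(e_{\pm r}^{(n)} = 0\) for \(r > n\) usefully clarifies a point the paper leaves unstated.
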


This motivates the following definition.

\begin{defin}
For \(m\geq n\geq 2d+1\), define
\begin{align}
     \pi_{m,n}^{(d)}\colon A_{d,m}\longrightarrow A_{d,n}, \quad  \pi_{m,n}^{(d)}(\rho_m(F))=\rho_n(F), \ F\in A_d.
\end{align}
That is, we define
    \begin{align}
        \pi_{m,n}^{(d)} := \rho_n|_{A_d} \circ (\rho_m|_{A_d})^{-1}. \label{eq:banana}
    \end{align}
\end{defin}
From \cref{eq:banana}, we see that \(\pi^{(d)}_{m,n}\) is a ring isomorphism for all \(m \geq n \geq 2d + 1\). By definition,
\[
\pi_{m,n}^{(d)} (e_i^{(m)}) = e_i^{(n)}, \qquad -d \leq i \leq d.
\]
In particular, \(\pi_{l,n}^{(d)} = \pi_{m,n}^{(d)} \circ \pi_{l,m}^{(d)} \) for \(l \geq m \geq n \geq 2d + 1\), and therefore the \(\{\pi_{m,n}^{(d)}\colon A_{d,m} \rightarrow A_{d,n}\}\) form an inverse system of rings. Because all \(\pi_{m,n}^{(d)}\) and \(\rho_n|_{A_d}\) are isomorphisms and they commute, i.e.
\[
\begin{tikzcd}
A_d \arrow[d, "\rho_m"] \arrow[rd, "\rho_n"] &           \\
{A_{d,m}} \arrow[r, "{\pi_{m,n}^{(d)}}"]     & {A_{d,n}},
\end{tikzcd}
\]
we get that 
\begin{align}
    A_d \cong \varprojlim_{n \geq 2d + 1} A_{d,n}.
\end{align}

Additionally, the \(A_d\) form a system of subrings of \(\LSym\) which exhaust the ring, that is, 
\begin{align}
    \bigcup_{d \geq 1} A_d = \LSym.
\end{align}
 This implies that \(\LSym = \varinjlim_{d \geq 1} A_d \). Thus, we have proven the following theorem.
\begin{theo}
    With the notation above, we have $ A_d \cong \varprojlim_{n\geq 2d+1} A_{d,n}$, and hence
    \begin{align}
        \LSym
        =
        \varinjlim_d A_d
        \cong
        \varinjlim_d \varprojlim_{n\geq 2d+1} A_{d,n},
    \end{align}
    where the inverse limit is with respect to the maps $\pi^{(d)}_{m,n}$ defined in \cref{eq:banana}. 
\end{theo}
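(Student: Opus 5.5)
The plan is to assemble the two halves of the statement from what has already been established, checking carefully the points where the paper's argument was informal.

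\emph{The inverse-limit isomorphism $A_d \cong \varprojlim_{n\geq 2d+1} A_{d,n}$.} By \cref{cor:giraffe}, for every $n \geq 2d+1$ the restriction $\rho_n|_{A_d}\colon A_d \to A_{d,n}$ is a ring isomorphism. The maps $\pi^{(d)}_{m,n} = \rho_n|_{A_d}\circ(\rho_m|_{A_d})^{-1}$ are then isomorphisms, and the compatibility $\pi^{(d)}_{l,n} = \pi^{(d)}_{m,n}\circ\pi^{(d)}_{l,m}$ follows by cancelling $(\rho_m|_{A_d})^{-1}\circ\rho_m|_{A_d}$. So $(A_{d,n},\pi^{(d)}_{m,n})$ is an inverse system, and the family $(\rho_n|_{A_d})_n$ is a compatible cone. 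This cone induces the ring map
\[\Phi_d\colon A_d\to \varprojlim_{n} A_{d,n},\qquad F\mapsto (\rho_n(F))_n.\]
\begin{itemize}
\item $\Phi_d$ is injective, because already its composite with the projection to $A_{d,2d+1}$ is injective.
\item $\Phi_d$ is surjective: given a compatible family $(f_n)_n$, set $F := (\rho_{2d+1}|_{A_d})^{-1}(f_{2d+1})$. For every $n \geq 2d+1$, compatibility gives $f_{2d+1}=\pi^{(d)}_{n,2d+1}(f_n)$, and applying the isomorphism $(\pi^{(d)}_{n,2d+1})^{-1}=\rho_n\circ(\rho_{2d+1})^{-1}$ yields $f_n=\rho_n(F)$.
\end{itemize}
This is the general fact that an inverse system of isomorphisms indexed by a directed set with a least element has limit isomorphic to any single term.

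\emph{The direct-limit description of $\LSym$.} The $A_d$ form an increasing chain, $A_d\subseteq A_{d+1}$. Each element of $\LSym$ is a polynomial in finitely many of the generators $e_r(X), e_r(Y)$ by \cref{prop:ebasis}, so it lies in some $A_d$; hence $\bigcup_d A_d=\LSym$. A union of an increasing chain of subrings is canonically the direct limit along the inclusions, so $\LSym=\varinjlim_d A_d$.

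\emph{Combining the two.} To obtain $\varinjlim_d \varprojlim_n A_{d,n}$, the isomorphisms $\Phi_d$ must be compatible with transition maps. Define the transition maps of the right-hand side by transport:
\[\varprojlim_{n\geq 2d+1} A_{d,n}\to \varprojlim_{n\geq 2d+3} A_{d+1,n},\qquad (f_n)_{n\geq 2d+1}\mapsto (f_n)_{n\geq 2d+3},\]
which makes sense since $A_{d,n}\subseteq A_{d+1,n}$. I then check that $\Phi_{d+1}|_{A_d}$ equals $\Phi_d$ followed by this truncation; this is immediate because both are given by $F\mapsto(\rho_n(F))_n$. A natural isomorphism of directed systems induces an isomorphism of direct limits, which finishes the proof.

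\emph{Main obstacle.} The substantive input, injectivity of $\rho_n|_{A_d}$, is already done. The only delicate point is making the transition maps in the double limit explicit. The paper leaves these implicit, and they require the truncation of the index range from $n\geq 2d+1$ to $n\geq 2d+3$ together with the inclusion $A_{d,n}\subseteq A_{d+1,n}$; I would spell this out rather than leave it tacit.
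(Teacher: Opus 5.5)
Your proposal is correct and follows the same route as the paper. Both arguments get $A_d \cong \varprojlim_{n\geq 2d+1} A_{d,n}$ from the fact that every $\rho_n|_{A_d}$ ($n\geq 2d+1$) and every $\pi^{(d)}_{m,n}$ is an isomorphism, and both get $\LSym=\varinjlim_d A_d$ from $\bigcup_d A_d=\LSym$. You additionally spell out the bijectivity of $\Phi_d$ and the transition maps of the double limit, which the paper leaves tacit. The one check you pass over is that truncation actually lands in $\varprojlim_{n\geq 2d+3} A_{d+1,n}$. That needs $\pi^{(d+1)}_{m,n}|_{A_{d,m}}=\pi^{(d)}_{m,n}$, not merely $A_{d,n}\subseteq A_{d+1,n}$. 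This is immediate from the definitions, or from surjectivity of $\Phi_d$ together with your identity $\Phi_{d+1}|_{A_d}=\text{truncation}\circ\Phi_d$.
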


We extend the \emph{generating functions} for the elementary symmetric functions and the homogeneous symmetric functions from \cite[Ch.~I, \S2 Eqs.~2.2, 2.5, 2.10]{Mac15} to Laurent symmetric functions. Define
\begin{align}
    E^+(t):=\sum_{r\geq0}e_r(X)t^r,\qquad
    E^-(t):=\sum_{r\geq0}e_r(Y)t^r,
\end{align}
and
\begin{align}
    H^+(t):=\sum_{r\geq0}h_r(X)t^r,\qquad
    H^-(t):=\sum_{r\geq0}h_r(Y)t^r.
\end{align}

\begin{prop}
We have
\begin{align}
    E^+(t)H^+(-t)=1,\qquad E^-(t)H^-(-t)=1.
\end{align}
\end{prop}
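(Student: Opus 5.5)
The identities live entirely inside the two tensor factors, so the plan is to reduce to the classical statement in each alphabet separately. The power series \(E^+(t)\) and \(H^+(t)\) have coefficients in \(\Lambda(X)\), and \(E^-(t)\), \(H^-(t)\) have coefficients in \(\Lambda(Y)\). We regard \(\Lambda(X)\) and \(\Lambda(Y)\) as subrings of \(\LSym=\Lambda(X)\otimes_\Z\Lambda(Y)\) via \(f\mapsto f\otimes 1\) and \(g\mapsto 1\otimes g\). So it suffices to prove \(E(t)H(-t)=1\) in \(\Lambda[[t]]\) for a single alphabet, and then to apply the ring homomorphisms \(\Lambda[[t]]\to\LSym[[t]]\) induced by these two inclusions. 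These are injective because \(\Lambda(X)\) and \(\Lambda(Y)\) are free \(\Z\)-modules.

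For a single alphabet, I would invoke \cite[Ch.~I, \S2, Eqs.~(2.2), (2.5), (2.6)]{Mac15}. There \(E(t)=\prod_i(1+X_it)\) and \(H(t)=\prod_i(1-X_it)^{-1}\), and hence \(E(t)H(-t)=1\). To make this rigorous without analytic infinite products, I would argue through finite truncations. By coefficient comparison, the identity is equivalent to
\[
\sum_{r=0}^{N}(-1)^r e_r h_{N-r}=0 \quad\text{for all } N\ge 1.
\]
An element of \(\Lambda^N\) vanishes once its image under the projection to \(\Sym_n\) vanishes for some \(n\ge N\), since these projections are isomorphisms in degree \(N\). In \(\Pol_n[[t]]\) the identity \(\prod_{i=1}^n(1+x_it)\prod_{i=1}^n(1-x_it)^{-1}\big|_{t\mapsto -t}=1\) is a finite computation: each factor \((1-x_it)^{-1}=\sum_k x_i^kt^k\) is the inverse of \(1-x_it\) in \(\Pol_n[[t]]\), and expanding the products recovers the definitions of \(e_r\) and \(h_r\).

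The proof then concludes by applying the inclusions \(X\)-side and \(Y\)-side. This gives \(E^+(t)H^+(-t)=1\) and \(E^-(t)H^-(-t)=1\) coefficientwise in \(\LSym[[t]]\).

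The only mild obstacle is the bookkeeping for identifying coefficients through the tensor product, together with justifying the passage from finitely many variables to symmetric functions. Both are routine once the degree-\(N\) restriction isomorphism \(\Lambda^N\cong\Sym_n^N\) for \(n\ge N\) is cited from \cite[Ch.~I, \S2]{Mac15}.
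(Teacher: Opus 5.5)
Your proposal is correct and takes the same route as the paper: the paper's proof simply cites \cite[Ch.~I, \S2, Eq.~(2.6)]{Mac15} in each alphabet, and your truncation argument via the isomorphism $\Lambda^N \cong \Sym_n^N$ for $n \ge N$ is a faithful unpacking of that citation. Injectivity of $\Lambda(X), \Lambda(Y) \to \LSym$ is not actually needed, since pushing the single-alphabet identity forward along a ring homomorphism already suffices.
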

\begin{proof}
    Both identities follow from \cite[Ch.~I, \S2, Eq.~(2.6)]{Mac15}.
\end{proof}

As for the non-Laurent case, we define the Laurent symmetric functions over $\Q$ by
    \begin{align}
        \LSym_\Q := \Q \otimes_\Z \LSym.
    \end{align}

\begin{prop}
    There is an isomorphism
    \[
    \LSym_{\Q} \cong \Lambda_{\Q}(X) \otimes_\Q \Lambda_\Q (Y)
    \]
\end{prop}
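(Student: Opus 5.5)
The plan is to build an explicit multiplication map and show that it carries a $\Q$-basis to a $\Q$-basis, using the polynomial description of $\LSym$ from \cref{prop:ebasis}. Here $\Lambda_\Q(X)$ means $\Q\otimes_\Z\Lambda(X)$, and likewise for $Y$.

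First I note two facts.
\begin{itemize}
\item By \cref{prop:ebasis}, $\LSym=\Z[e_1(X),e_2(X),\dots,e_1(Y),e_2(Y),\dots]$ with all these generators algebraically independent.
\item Hence the monomials $e_\lambda(X)e_\mu(Y)$, for $\lambda,\mu\in\Par$, form a $\Z$-basis of $\LSym$.
\end{itemize}
Since $\LSym$ is a free $\Z$-module, $\LSym_\Q=\Q\otimes_\Z\LSym$ is a $\Q$-vector space with basis $1\otimes e_\lambda(X)e_\mu(Y)$. Moreover $\LSym_\Q\cong\Q[e_{\pm1},e_{\pm2},\dots]$ as a polynomial $\Q$-algebra. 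In the same way, the fundamental theorem \cite[Ch.~I, \S2]{Mac15} gives $\Lambda(X)=\Z[e_1(X),e_2(X),\dots]$ with free $\Z$-basis $\{e_\lambda(X)\}$. So $\Lambda_\Q(X)$ has $\Q$-basis $\{1\otimes e_\lambda(X)\}$ and equals the polynomial ring $\Q[e_1(X),e_2(X),\dots]$; the same holds for $Y$.

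Next I define the map. The natural ring maps $\Lambda(X)\to\LSym$, $f\mapsto f\otimes1$, and $\Lambda(Y)\to\LSym$, $g\mapsto 1\otimes g$, become $\Q$-algebra maps $\iota_X\colon\Lambda_\Q(X)\to\LSym_\Q$ and $\iota_Y\colon\Lambda_\Q(Y)\to\LSym_\Q$ after base change. Because $\LSym_\Q$ is commutative, the assignment $(f,g)\mapsto\iota_X(f)\iota_Y(g)$ is $\Q$-bilinear. It therefore induces a $\Q$-algebra homomorphism
\[
\Phi\colon \Lambda_\Q(X)\otimes_\Q\Lambda_\Q(Y)\longrightarrow\LSym_\Q,\qquad f\otimes g\longmapsto \iota_X(f)\,\iota_Y(g).
\]
The source has $\Q$-basis $e_\lambda(X)\otimes e_\mu(Y)$, since a tensor product of vector spaces has the products of basis vectors as a basis. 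The map $\Phi$ sends $e_\lambda(X)\otimes e_\mu(Y)$ to $1\otimes e_\lambda(X)e_\mu(Y)$, and these form the basis of $\LSym_\Q$ found above. So $\Phi$ maps a basis bijectively onto a basis and is an isomorphism of $\Q$-algebras.

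The step needing the most care is the base-change claim that $\Q\otimes_\Z M$ has $\Q$-basis $\{1\otimes b\}$ whenever $M$ is $\Z$-free with basis $\{b\}$. This follows from $\Q\otimes_\Z\bigoplus_b\Z b\cong\bigoplus_b\Q$, since tensor products commute with direct sums. Everything else is bookkeeping of the polynomial presentations: the algebraic independence of the $e_r(X),e_r(Y)$ over $\Z$ from \cref{prop:ebasis} is exactly what guarantees that no extra relations appear over $\Q$. As a by-product, both sides are identified with $\Q[e_{\pm1},e_{\pm2},\dots]$.
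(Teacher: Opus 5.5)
Your proof is correct, but it takes a more hands-on route than the paper. The paper's proof is purely formal. Since $\LSym=\Lambda(X)\otimes_\Z\Lambda(Y)$ by definition, it invokes the canonical base-change isomorphism $R'\otimes_R(M\otimes_R N)\cong(R'\otimes_R M)\otimes_{R'}(R'\otimes_R N)$ with $R=\Z$ and $R'=\Q$. That isomorphism holds for arbitrary $R$-modules $M,N$, so it needs neither freeness nor the fundamental theorem.

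You instead use the $\Z$-freeness of $\Lambda(X)$ and $\Lambda(Y)$, via the bases $e_\lambda$, to write down the natural multiplication map. You then check that it sends the basis $e_\lambda(X)\otimes e_\mu(Y)$ bijectively onto the basis $1\otimes e_\lambda(X)e_\mu(Y)$. In effect you reprove the base-change isomorphism in the special case of free modules.

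What your route buys is explicitness. You verify that the isomorphism is one of $\Q$-algebras, which the paper leaves implicit. You also get the polynomial presentation $\LSym_\Q=\Q[e_{\pm1},e_{\pm2},\dots]$ as a by-product. What it costs is reliance on a structural fact about symmetric functions that the formal argument does not need.

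One small point of phrasing: the $\Q$-bilinearity of $(f,g)\mapsto\iota_X(f)\iota_Y(g)$ comes from distributivity and the $\Q$-linearity of $\iota_X,\iota_Y$. Commutativity of $\LSym_\Q$ is what makes the induced map $\Phi$ multiplicative.
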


\begin{proof}

    Using the integral isomorphism $\LSym \cong \Lambda(X) \otimes_\Z \Lambda(Y)$ together with the canonical base change isomorphism $R' \otimes_R (M \otimes_R N) \cong (R' \otimes_R M) \otimes_{R'} (R' \otimes_R N)$ for extension of scalars from $\Z$ to $\Q$, we obtain:
    \begin{align*}
    \LSym_\Q &= \Q \otimes_\Z \LSym \\
    &\cong \Q \otimes_\Z (\Lambda(X) \otimes_\Z \Lambda(Y)) \\
    &\cong (\Q \otimes_\Z \Lambda(X)) \otimes_\Q (\Q \otimes_\Z \Lambda(Y)) \\
    &= \Lambda_\Q(X) \otimes_\Q \Lambda_\Q(Y).
    \end{align*}
\end{proof}

Recall the definition of $p_r$ for $r \in \Z \setminus \{0\}$ in \cref{def:hr_pr}.

\begin{cor}
    Over \(\Q\), we have that
    \[
        \LSym_{\Q}
        =
        \Q[p_{\pm1}, p_{\pm 2}, \dots],
    \]
    where \(p_{\pm1}, p_{\pm 2}, \dots\) are algebraically independent.
\end{cor}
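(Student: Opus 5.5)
The plan is to reduce the statement to the classical fact about power sums in one alphabet and then transport it through the tensor product isomorphism of the preceding proposition. By \cite[Ch.~I, \S2, Eq.~(2.14)]{Mac15}, $\Lambda_\Q(X)=\Q[p_1(X),p_2(X),\dots]$ with $p_1(X),p_2(X),\dots$ algebraically independent over $\Q$, and likewise for $Y$. Equivalently, the map $\Q[U_1,U_2,\dots]\to\Lambda_\Q(X)$, $U_r\mapsto p_r(X)$, is an isomorphism of $\Q$-algebras, and similarly $\Q[V_1,V_2,\dots]\to\Lambda_\Q(Y)$, $V_r\mapsto p_r(Y)$.

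Next I tensor these two isomorphisms over $\Q$. The standard identification $\Q[U_1,U_2,\dots]\otimes_\Q\Q[V_1,V_2,\dots]\cong\Q[U_1,U_2,\dots,V_1,V_2,\dots]$, which sends monomials to monomials, gives a $\Q$-algebra isomorphism
\[
\Q[U_1,U_2,\dots,V_1,V_2,\dots]\longrightarrow\Lambda_\Q(X)\otimes_\Q\Lambda_\Q(Y)\cong\LSym_\Q ,
\]
with $U_r\mapsto p_r(X)\otimes 1$ and $V_r\mapsto 1\otimes p_r(Y)$. By \cref{def:hr_pr}, these images are $p_r$ and $p_{-r}$ respectively. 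Surjectivity of this map shows that $p_{\pm1},p_{\pm2},\dots$ generate $\LSym_\Q$. Injectivity is precisely the statement that they are algebraically independent.

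The only point needing care is the identification of $p_r(X)\otimes 1$ under the composite isomorphism $\LSym_\Q\cong\Lambda_\Q(X)\otimes_\Q\Lambda_\Q(Y)$. I will check this by tracing the base-change isomorphism in the proof of the preceding proposition: it sends $1\otimes(f\otimes g)$ to $(1\otimes f)\otimes(1\otimes g)$, so $p_r(X)\otimes 1\in\LSym$ goes to the expected element. I do not expect any step to be a serious obstacle. This bookkeeping is the main thing to get right, together with noting that the monomials in the $p_{\pm r}$ correspond to the pure tensors of a $\Q$-basis $\{p_\lambda(X)\}\otimes\{p_\mu(Y)\}$, which directly exhibits linear independence of the monomials.
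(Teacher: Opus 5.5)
Your proposal is correct and follows the paper's proof. Both combine Macdonald's $\Lambda_\Q(X)=\Q[p_1(X),p_2(X),\dots]$ (with algebraic independence) with the preceding isomorphism $\LSym_\Q\cong\Lambda_\Q(X)\otimes_\Q\Lambda_\Q(Y)$ and the identification of a tensor product of polynomial rings with the polynomial ring in the union of the variables, while you additionally make explicit that $p_r(X)\otimes 1$ and $1\otimes p_r(Y)$ are the elements $p_r$ and $p_{-r}$ of $\LSym_\Q$.
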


\begin{proof}
    Indeed, we know that \(\Lambda_\Q (X) = \Q[p_1,p_2,\dots]\) (see \cite[Ch.~I \(\S2\)]{Mac15}). Therefore, we have
    \[
    \LSym_\Q \cong \Q[p_1(X),p_2(X),\dots] \otimes_\Q \Q [p_1 (Y),p_2(Y),\dots] \cong \Q[p_1(X),p_2(X),\dots,p_1 (Y),p_2(Y),\dots].
    \]
\end{proof}

\begin{prop} \label{prop:prhon}
For $\rho_n$, with $n, r \geq 1$, we have 
    \[
        \rho_n(p_r)=\sum_{i=1}^n x_i^r,\qquad
        \rho_n(p_{-r})=\sum_{i=1}^n x_i^{-r}.
    \]
\end{prop}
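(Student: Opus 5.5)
The plan is to reduce both identities to the behaviour of $\rho_n$ on generating functions, since $\rho_n$ was defined only on the generators $e_r(X)$ and $e_r(Y)$. I would treat $p_r$ first and $p_{-r}$ second.

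For $p_r = p_r(X)$, I would use Newton's identities in $\Lambda(X)$. They say that $p_r(X)$ is a fixed polynomial with integer coefficients in $e_1(X),\dots,e_r(X)$, for instance
\[
p_r = \sum_{i=1}^{r-1} (-1)^{i-1} e_i(X)\,p_{r-i}(X) + (-1)^{r-1} r\, e_r(X).
\]
The same polynomial identity holds in $\Sym_n$ between $p_r(x_1,\dots,x_n)$ and the $e_i^{(n)}$, with $e_i^{(n)}=0$ for $i>n$. This is what \cref{lem:endetp} records in determinantal form, and that lemma is stated for all $n,m\ge 1$, including the case where the index exceeds the number of variables. Since $\rho_n$ is a ring homomorphism with $\rho_n(e_i(X)) = e_i^{(n)}$ for all $i$, applying $\rho_n$ to the determinant formula \cref{eq:pr-er} in infinitely many variables yields the same determinant in the $e_i^{(n)}$. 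By \cref{eq:pr-er} with $m=n$, that determinant equals $\sum_i x_i^r$.

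The only point needing care is that $\rho_n(e_i(X)) = e_i(x_1,\dots,x_n)$ really is $0$ for $i>n$. This is immediate from the convention stated after the definition of $e_r$.

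For $p_{-r} = p_r(Y)$, I would run the same argument with the $Y$-alphabet: $p_r(Y)$ is given by the determinant \cref{eq:pr-er} in $e_1(Y),\dots,e_r(Y)$. By \cref{eq:eY}, $\rho_n(e_i(Y)) = e_i(x_1^{-1},\dots,x_n^{-1})$ for all $i\ge 0$, including $0$ for $i>n$. Hence $\rho_n(p_r(Y))$ is the determinant \cref{eq:pr-er} evaluated at $e_i(x_1^{-1},\dots,x_n^{-1})$. To finish, I would apply the ring homomorphism $x_i\mapsto x_i^{-1}$ of \cref{eq:gamma} to the finite-variable identity \cref{eq:pr-er}. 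This shows the determinant equals $\sum_i x_i^{-r}$.

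There is no real obstacle. The step I would check most carefully is that the finite-variable identity \cref{eq:pr-er} is valid for $r>n$, i.e.\ that truncating the $e_i$ to zero is compatible with it. \cref{lem:endetp} asserts this for all $n,m\ge1$. Alternatively, I would argue via generating functions: $\rho_n(E^+(t)) = \prod_{i=1}^n(1+x_it)$, and taking logarithmic derivatives gives $\sum_r (-1)^{r-1}p_r t^{r-1}$ on both sides.
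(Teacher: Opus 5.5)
Your proof is correct and follows essentially the paper's route. Both arguments reduce to the values of $\rho_n$ on the generators $e_r(X), e_r(Y)$, and both obtain the $p_{-r}$ case from the $p_r$ case via the inversion $\Gamma$; the paper packages this as the commuting square $\rho_n\circ\sigma=\Gamma\circ\rho_n$ with $\sigma$ swapping $X$ and $Y$. The one difference is in the $p_r$ step: where you invoke the Newton/determinantal identity \cref{eq:pr-er} and must check it still holds for $r>n$, the paper notes that $\rho_n|_{\Lambda(X)}$ agrees with the classical restriction $\Lambda(X)\to\Sym_n$, for which $p_r\mapsto p_r^{(n)}$ is immediate.
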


\begin{proof}
    For $r\geq 1$, it suffices to show that the diagram
    \[
\begin{tikzcd}
\Lambda (X) \arrow[r, hook] \arrow[d, "\rho_n^X"'] & \Lambda (X) \otimes_{\mathbb{Z}} \Lambda (Y) \arrow[d, "\rho_n"] \\
\mathrm{Sym}_n \arrow[r, hook]     & \mathrm{LSym}_n,      
\end{tikzcd}
\]
commutes, where $\rho_n^X$ denotes the restriction to finitely many variables in the non-Laurent symmetric polynomials as given in \cite[Ch.~I, \S2, pp.18-19]{Mac15}. It suffices to show $\rho_n^X(e_r) = \rho_n(e_r)$ (since the $e_r$'s are generators and $\rho_n$ is a ring homomorphism). This follows immediately from the definition of $\rho_n$, since we have
\begin{align*}
    \rho_n^X(e_r) = e_r^{(n)}= \rho_n(e_r).
\end{align*}

We also note that the diagram
\begin{equation}
    \begin{tikzcd}
\Lambda(X) \otimes \Lambda (Y) \arrow[r, "\sigma"] \arrow[d, "\rho_n"'] & \Lambda(X) \otimes \Lambda(Y) \arrow[d, "\rho_n"] \\
\LSym_n \arrow[r, "\Gamma"']                   & \LSym_n 
\end{tikzcd} \label{eq:cd}
\end{equation}
commutes, where $\sigma$ is defined by $\sigma(X) = Y$ and $\sigma(Y) = X$ and $\Gamma$ is defined in \cref{eq:gamma}, giving symmetry in $X$ and $Y$.
The commutativity of the diagram can also be checked by considering the images of the generators \(e_r\) for \(r \in \Z\). Then, using \cref{eq:cd} and knowing that \(\rho_n(p_r) = p_r^{(n)}\) for \(r > 0\), we can deduce that images of \(p_r\) for \(r < 0\) under the \(\rho_n\) map are the ones stated.
\end{proof}

\section{Laurent Schur functions} \label{sec:lscf}

In the following we denote a signed partition by a pair of partitions $\lambda, \mu \in \Par$. Recall that for any \(n \geq \ell(\lambda) + \ell(\mu)\) the signed partition can be expressed as an \(n\)-tuple
\begin{align*}
    (\lambda_1 ,\dots , \lambda_r, 0 , \dots, 0 , -\mu_s , \dots, -\mu _1),
\end{align*}
where $r = \ell(\lambda) $ and $s = \ell (\mu)$. Accordingly, for $n \geq r+s$, let  \begin{align*}
        s_{\lambda,\mu}^{(n)} : = s_{(\lambda_1 ,\dots , \lambda_r, 0 , \dots, 0 , -\mu_s , \dots, -\mu _1)} (x_1, \dots,x_n),
    \end{align*}
where the right hand side is a Laurent Schur polynomial in $n$ variables as defined in \cref{def:schur}. The number of zeros added in the middle of the partition is $n - \ell(\lambda) - \ell(\mu)$.

For $\lambda, \mu \in \Par$, let $s_{\lambda/\mu}$ denote the \emph{skew Schur function} by
    \begin{align}
    s_{\lambda/\mu}:=\sum_{\nu \in \Par}\langle s_{\lambda}, s_\mu s_\nu \rangle s_\nu,
    \end{align}
which lies in the ring of symmetric functions (see \cite[Ch.~I, \S5, Eq.~(5.3)]{Mac15}). Here, $\langle \cdot , \cdot \rangle$ is the Hall inner product for usual symmetric functions (i.e.\ defined such that the Schur functions are orthonormal). Likewise, let $\supseteq$ be the relation given by
\begin{align}
    \lambda \supseteq \mu \iff \lambda_i \geq \mu_i \text{ for all } i. \label{eq:cont}
\end{align}
Equivalently,  $\lambda \supseteq \mu$ if and only if the Young diagram of $\mu$ embeds in that of $\lambda$. For example, $\lambda=(4, 3, 2, 1)$ and $\mu=(3, 2, 1)$ are embedded as follows.
\[\begin{ytableau}
    *(Yellow)~ & *(Yellow) ~ &  ~ & ~ \\
    *(Yellow)~ & *(Yellow) ~ & ~\\
    *(Yellow)~ & ~ \\
    ~ & ~\\
\end{ytableau}\]
 Thus, we may associate a \emph{skew Young diagram} with $\lambda/\mu$, as the following example shows.
\[\begin{ytableau}
    \none~ & \none & ~ & ~ \\
    \none & \none & ~\\
    \none  & ~ \\
    ~ & ~\\
\end{ytableau}\]

\begin{defin}\label{def:lsf}
    For $\lambda,\mu \in \Par$ we define the \emph{Laurent Schur function $s_{\lambda,\mu}$} by 
    \begin{align}
        s_{\lambda,\mu}
        :=
        \sum_{\nu \in \Par}(-1)^{|\nu|}
        s_{\lambda/\nu}(X)\,
        s_{\mu/\nu'}(Y). \label{eq:ksf}
    \end{align}
\end{defin}
The sum is finite as there are only finitely many $\nu $ with $\nu \subseteq \lambda$ and $\nu' \subseteq \mu$ \cite[Ch.~I, \S5, Eq.~(5.7)]{Mac15}. 

\begin{prop}[{\cite[Prop. 2.2, Thm.~2.3]{Koi89}}]\label{prop:rhoschur}
    The Laurent Schur functions are well-behaved with respect to the restriction to finitely many variables. That is, for $n \geq \ell(\lambda) + \ell(\mu)$, we have
    \begin{align}
        \rho_n(s_{\lambda,\mu}) = s_{\lambda,\mu}^{(n)}.
    \end{align}
\end{prop}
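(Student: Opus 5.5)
The plan has two stages. First I establish a determinantal (Jacobi--Trudi type) formula for $s_{\lambda,\mu}$ inside $\LSym$. Then I apply $\rho_n$ and compare with the finite-variable Jacobi--Trudi formula of \cref{prop:schur_det}.

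Write $r=\ell(\lambda)$ and $s=\ell(\mu)$. In $\LSym$ I will prove Koike's determinant
\[
s_{\lambda,\mu}=\det\begin{pmatrix} \bigl(h_{\mu_{s+1-i}-s-1+i+j'}(Y)\bigr)_{1\le i\le s,\ 1\le j\le r+s}\\ \bigl(h_{\lambda_{i-s}+s-i+j}(X)\bigr)_{s< i\le r+s,\ 1\le j\le r+s}\end{pmatrix},
\]
with $h_k=0$ for $k<0$ in each alphabet separately. The exact index shifts (here written via a placeholder $j'$ for the reversed column index) are to be fixed so that the $X$-block is the ordinary Jacobi--Trudi pattern of $\lambda$ and the $Y$-block is that of $\mu$ read with reversed rows and columns.

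The proof of this formula is a Laplace expansion along the first $s$ rows. Choose a set $J$ of $s$ columns. The $Y$-minor on $J$ is a skew Jacobi--Trudi determinant, hence equals $s_{\mu/\alpha}(Y)$, where $\alpha$ is the partition encoded by $J$. The complementary $X$-minor equals $s_{\lambda/\beta}(X)$, where $\beta$ is encoded by the complement of $J$. By the complementary-sets lemma for a partition and its conjugate (Macdonald Ch.~I, (1.7)), the pairs that actually occur satisfy $\alpha=\nu'$ and $\beta=\nu$. The Laplace sign then works out to $(-1)^{|\nu|}$. This recovers \cref{eq:ksf}.

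Next apply $\rho_n$ with $n\ge r+s$. The $X$-entries become $h_k^{(n)}$, and the $Y$-entries become $h_k(x_1^{-1},\dots,x_n^{-1})$. On the other side, set $\kappa=(\lambda_1,\dots,\lambda_r,0,\dots,0,-\mu_s,\dots,-\mu_1)$ and $d=\mu_1$. \cref{prop:schur_det} gives
\[
s^{(n)}_{\lambda,\mu}=e_{-n}^{d}\det\bigl(h^{(n)}_{(\kappa+(d^n))_i-i+j}\bigr)_{n\times n}.
\]
So the task is to identify $e_{-n}^d$ times this $n\times n$ determinant with the $(r+s)\times(r+s)$ Koike determinant evaluated at $x$ and $x^{-1}$.

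For this I use the Toeplitz matrices $H=(h^{(n)}_{j-i})$ and $E=((-1)^{j-i}e^{(n)}_{j-i})$, which are mutually inverse because $E(-t)H(t)=1$ holds in $n$ variables. By Jacobi's complementary minor theorem (Macdonald Ch.~I, (2.9)), the bottom $n-r$ rows of the Jacobi--Trudi matrix for $\kappa+(d^n)$ can be exchanged for a complementary minor of $E$ of size $d$. That minor involves $e^{(n)}_{n-k}$ for $k\le s$. After multiplying by $e_{-n}^d$, the rewriting $e_{n-k}^{(n)}/e_n^{(n)}=e_k(x^{-1})$ from \cref{eq:eY} turns it into a determinant in the $e_k(x^{-1})$. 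Applying \cref{lem:e_nh_n}-type duality (the $\omega$ involution of \cref{cor:omega}, with conjugate shapes) in the variables $x^{-1}$ then converts it back into an $h(x^{-1})$-determinant of the shape of $\mu$.

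I expect this last identification to be the main obstacle. Matching the sizes ($n$ versus $r+s$), the column shifts, and the overall sign $(-1)^{\cdot}$ is delicate. The hypothesis $n\ge r+s$ is exactly what guarantees that the zero block separates the $\lambda$-rows from the $\mu$-rows, which makes the complementary minor well defined and prevents the $X$- and $Y$-blocks from interacting.

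If this bookkeeping proves unwieldy, I would fall back on the bialternant. I would expand $\rho_n$ of \cref{eq:ksf} using the branching rule $s_\lambda(x_1,\dots,x_n)$ from skew Schur polynomials, and verify the identity directly on $a_{\kappa+\delta^n}$ via \cref{cor:ea}. As a final option, I would check it against the orthonormality in \cref{prop:SchurOGB}.
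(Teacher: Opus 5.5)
The paper does not prove this statement itself; it cites Koike. Your Stage~1 is sound once the subscripts are fixed. The matrix should have rows $h_{\mu_{s+1-i}+i-j}(Y)$ for $1\le i\le s$ and $h_{\lambda_{i-s}-i+j}(X)$ for $s<i\le r+s$; your $X$-subscript is off by $s$. With that correction, your Laplace expansion is right: the complementarity lemma forces the two skew shapes to be $\mu/\nu'$ and $\lambda/\nu$, and the sign is $(-1)^{|\nu|}$.

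The genuine gap is Stage~2. It is the whole content of the proposition, you leave it as an open obstacle, and the mechanism you sketch does not work as stated.

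\begin{itemize}
\item Jacobi's complementary minor theorem relates \emph{square} minors of $H$ and $E$. It cannot simply swap out a block of $n-r$ rows. You would have to Laplace-expand, apply it minor by minor, and show the pieces reassemble into one determinant, and this is not bookkeeping. For instance, the natural mixed determinant with $\mu_1$ rows $e_{\mu'_{\mu_1+1-i}+i-j}(Y)$ above $r$ rows $h_{\lambda_k-k-\mu_1+j}(X)$ expands to $\sum_\nu(-1)^{|\nu|}s_{\lambda/\nu}(X)s_{\mu/\nu}(Y)$, with no conjugate. For $\lambda=(2)$, $\mu=(1,1)$ this misses the constant term $+1$ of $s_{\lambda,\mu}=h_2(X)e_2(Y)-h_1(X)h_1(Y)+1$.
\item $\omega$ is a ring map acting on all entries at once, so it cannot convert only the $x^{-1}$-rows of a determinant. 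Also, in $n$ variables it does not send $s_\alpha$ to $s_{\alpha'}$ in general.
\item The fallbacks (the branching rule, orthonormality) are directions, not arguments.
\end{itemize}

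The missing idea is to run Stage~1 with elementary functions instead. The specialization is clean exactly on them: by \cref{eq:eY}, $\rho_n(e_k(Y))=e^{(n)}_{n-k}/e^{(n)}_n$ for all $k\in\Z$, using the conventions $e_k(X)=e_k(Y)=0$ for $k<0$ and $e^{(n)}_j=0$ for $j<0$ or $j>n$. By contrast, $\rho_n(h_k(Y))$ has no one-term expression of this kind. With $p=\lambda_1$ and $q=\mu_1$, set
\[
D=\det\begin{pmatrix}\bigl(e_{\mu'_{q+1-i}+i-j}(Y)\bigr)_{1\le i\le q,\ 1\le j\le p+q}\\ \bigl(e_{\lambda'_{i-q}-i+j}(X)\bigr)_{q<i\le p+q,\ 1\le j\le p+q}\end{pmatrix}.
\]
Your Laplace argument, now using the dual skew formula $s_{\alpha/\beta}=\det(e_{\alpha'_i-\beta'_j-i+j})$, gives
\[
D=\sum_\nu(-1)^{|\nu|}s_{\lambda/\nu}(X)s_{\mu/\nu'}(Y)=s_{\lambda,\mu}.
\]
Now let $\tilde\kappa=\kappa+(q^n)$. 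The hypothesis $n\ge r+s$ is used here: the first $n-s$ parts of $\tilde\kappa$ are $\ge q$, and only the first $r$ exceed $q$. Hence
\[
\tilde\kappa'=(n-\mu'_q,\dots,n-\mu'_1,\lambda'_1,\dots,\lambda'_p).
\]
Therefore $\rho_n(D)=e_{-n}^{q}\det\bigl(e^{(n)}_{\tilde\kappa'_i-i+j}\bigr)_{1\le i,j\le p+q}$. By the second formula of \cref{prop:schur_det} with $d=q$, this is exactly $s^{(n)}_{\lambda,\mu}$. This route needs neither complementary minors nor $\omega$.
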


\begin{prop}[{\cite[Cor.~2.3.2]{Koi89}}]\label{prop:sb}
The Laurent Schur functions
        $ s_{\lambda,\mu}$, $\lambda,\mu\in\Par$,
    form a basis of $\LSym$. 
\end{prop}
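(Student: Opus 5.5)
We will prove the statement by a unitriangularity argument with respect to the basis $\{s_\lambda(X)\,s_\mu(Y) \mid \lambda,\mu\in\Par\}$ of $\LSym=\Lambda(X)\otimes_\Z\Lambda(Y)$. These products form a $\Z$-basis because the Schur functions form a $\Z$-basis of $\Lambda(X)$ and of $\Lambda(Y)$, and tensor products of bases give a basis of the tensor product of free $\Z$-modules.

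First expand \cref{eq:ksf}. The term $\nu=\varnothing$ gives exactly $s_\lambda(X)s_\mu(Y)$. For $\nu\neq\varnothing$, the skew Schur function $s_{\lambda/\nu}(X)$ is a $\Z$-linear combination of $s_\alpha(X)$ with $|\alpha|=|\lambda|-|\nu|$; this uses the Littlewood--Richardson coefficients $\langle s_\lambda,s_\nu s_\alpha\rangle\in\N$. Similarly, $s_{\mu/\nu'}(Y)$ is a $\Z$-combination of $s_\beta(Y)$ with $|\beta|=|\mu|-|\nu|$. We therefore obtain
\[
s_{\lambda,\mu}=s_\lambda(X)s_\mu(Y)+\sum_{(\alpha,\beta)} c_{\alpha\beta}\, s_\alpha(X)s_\beta(Y),
\]
with $c_{\alpha\beta}\in\Z$, and every $(\alpha,\beta)$ in the sum satisfies $|\alpha|<|\lambda|$ and $|\beta|<|\mu|$. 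The sum is finite, as noted after \cref{def:lsf}.

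Next, order pairs by the total size $N(\lambda,\mu)=|\lambda|+|\mu|$. The transition matrix from $\{s_{\lambda,\mu}\}$ to $\{s_\lambda(X)s_\mu(Y)\}$ is unitriangular with respect to this filtration. Each correction term has $N$ strictly smaller by $2|\nu|$, and the diagonal coefficient is $1$.

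For spanning, we induct on $N$ and show that every $s_\lambda(X)s_\mu(Y)$ lies in the $\Z$-span of the Laurent Schur functions. The case $N=0$ is $s_{\varnothing,\varnothing}=1$. In general, $s_\lambda(X)s_\mu(Y)=s_{\lambda,\mu}-\sum c_{\alpha\beta}s_\alpha(X)s_\beta(Y)$, and each term in the sum has smaller $N$, so the inductive hypothesis applies to it.

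For linear independence, suppose $\sum d_{\lambda,\mu}s_{\lambda,\mu}=0$ is a finite relation with not all coefficients zero. Choose $(\lambda,\mu)$ with $d_{\lambda,\mu}\neq0$ and $N$ maximal. Then the coefficient of $s_\lambda(X)s_\mu(Y)$ in the sum is exactly $d_{\lambda,\mu}$. Indeed, any other $s_{\lambda',\mu'}$ contributes to it only through its leading term, which would require $(\lambda',\mu')=(\lambda,\mu)$, or through a correction term, which would require $N(\lambda',\mu')>N(\lambda,\mu)$ and contradict maximality. So $d_{\lambda,\mu}=0$, a contradiction.

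The only step needing care is confirming that skew Schur functions expand integrally in Schur functions with strictly smaller degree when $\nu\neq\varnothing$. This follows directly from the definition $s_{\lambda/\nu}=\sum_\alpha\langle s_\lambda,s_\nu s_\alpha\rangle s_\alpha$ together with homogeneity, since the inner product vanishes unless $|\alpha|=|\lambda|-|\nu|$. There is therefore no real obstacle, and the argument avoids \cref{prop:rhoschur} entirely.
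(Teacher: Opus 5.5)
Your proof is correct. The $\nu=\varnothing$ term of \eqref{eq:ksf} is exactly $s_\lambda(X)s_\mu(Y)$, and every other term lies in bidegree $(|\lambda|-|\nu|,|\mu|-|\nu|)$. There are only finitely many pairs with a given value of $|\lambda|+|\mu|$, so the transition matrix to the $\Z$-basis $\{s_\alpha(X)s_\beta(Y)\}$ is unitriangular, and integral spanning and independence follow as you argue.

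The paper gives no argument of its own here. It imports the statement from Koike, where it appears as a corollary of his results on universal characters. Among the tools available in the paper, your route is the economical one. Independence alone could also be obtained by specializing with $\rho_n$ for $n$ large and invoking Proposition~\ref{prop:rhoschur} together with Theorem~\ref{theo:Schur-basis}. Spanning cannot be obtained that way, since no single $\rho_n$ is injective on $\LSym$ (e.g.\ $\rho_n(e_{n+1}(X))=0$). Your filtration argument handles both at once, using only that skew Schur expansions are homogeneous and integral.

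What your argument does not produce is the explicit inverse. Combine $\sum_\lambda s_{\lambda/\nu}(X)s_\lambda(a)=s_\nu(a)\prod_{i,j}(1-X_ia_j)^{-1}$ with the dual Cauchy identity $\sum_\nu(-1)^{|\nu|}s_\nu(a)s_{\nu'}(b)=\prod_{i,j}(1-a_ib_j)$ in auxiliary alphabets $a,b$. This yields
\[
s_\lambda(X)\,s_\mu(Y)=\sum_{\alpha,\beta,\gamma\in\Par}\langle s_\lambda,s_\gamma s_\alpha\rangle\,\langle s_\mu,s_\gamma s_\beta\rangle\,s_{\alpha,\beta},
\]
so the inverse transition matrix has nonnegative Littlewood--Richardson entries. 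This is the form with representation-theoretic content. Its case $\lambda=\mu=(1)$ is \eqref{eq:s11}, which the paper uses in Section~\ref{sec:repGLinf} to read off the composition factors of $V\otimes V_\vee$.
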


These two propositions motivate the following definition of the Hall inner product in the ring of Laurent symmetric functions.
\begin{defin}
    We define the \emph{Hall inner product} on $\LSym$ such that
    \begin{align}
        \langle s_{\lambda,\mu},s_{\alpha,\beta}\rangle = \delta_{\lambda\alpha}\delta_{\mu\beta}. \label{eq:Hallip-lsc}
    \end{align}
    As the Laurent Schur functions form a basis of $\LSym$, the pairing \cref{eq:Hallip-lsc} uniquely defines an inner product. 
\end{defin}

\begin{prop}
    Let 
\begin{align}
    F=\sum \alpha_{ (\lambda, \mu)} s_{\lambda, \mu}, \quad G=\sum \gamma_{ (\beta, \eta)} s_{\beta, \eta} \in \LSym. \label{eq:FGschurexp}
\end{align} 
Then, for all
$n \geq \mathrm{max}(\ell(\lambda)+\ell(\mu), \ell(\beta)+\ell(\eta))$ where the maximum is taken over all pairs $\lambda, \mu$ and $\beta, \eta$ for which $\alpha_{(\lambda, \mu)} \neq 0$ and $\gamma_{ (\beta, \eta)}\neq 0$,
we have
    \begin{align}
        \langle F,G\rangle =  \langle \rho_n(F), \rho_n(G) \rangle_n. \label{eq:isom}
    \end{align} 
\end{prop}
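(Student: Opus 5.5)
The plan is to reduce everything to the orthonormality statements already available on both sides, using bilinearity. Both $\langle\cdot,\cdot\rangle$ on $\LSym$ and $\langle\cdot,\cdot\rangle_n$ on $\LSym_n$ are bilinear, and $\rho_n$ is $\Z$-linear. Expanding $F$ and $G$ as in \cref{eq:FGschurexp} gives
\[
\langle F,G\rangle=\sum \alpha_{(\lambda,\mu)}\gamma_{(\beta,\eta)}\langle s_{\lambda,\mu},s_{\beta,\eta}\rangle,\qquad
\langle \rho_n(F),\rho_n(G)\rangle_n=\sum \alpha_{(\lambda,\mu)}\gamma_{(\beta,\eta)}\langle \rho_n(s_{\lambda,\mu}),\rho_n(s_{\beta,\eta})\rangle_n .
\]
Both sums run over the finitely many pairs with nonzero coefficients. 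It therefore suffices to show that, for every such pair,
\[
\langle \rho_n(s_{\lambda,\mu}),\rho_n(s_{\beta,\eta})\rangle_n=\delta_{\lambda\beta}\delta_{\mu\eta}.
\]

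The key steps are as follows. First, the hypothesis on $n$ gives $n\ge \ell(\lambda)+\ell(\mu)$ and $n\ge\ell(\beta)+\ell(\eta)$ for every relevant pair. So \cref{prop:rhoschur} applies and yields $\rho_n(s_{\lambda,\mu})=s^{(n)}_{\lambda,\mu}$ and $\rho_n(s_{\beta,\eta})=s^{(n)}_{\beta,\eta}$. By definition these are the Laurent Schur polynomials $s_{\tilde\lambda}$ and $s_{\tilde\beta}$, where
\[
\tilde\lambda=(\lambda_1,\dots,\lambda_r,0,\dots,0,-\mu_s,\dots,-\mu_1)\in\SPar_n,
\]
and $\tilde\beta$ is formed from $(\beta,\eta)$ in the same way. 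Second, \cref{prop:SchurOGB} gives $\langle s_{\tilde\lambda},s_{\tilde\beta}\rangle_n=\delta_{\tilde\lambda,\tilde\beta}$.

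The remaining point, and the only real thing to check, is that $\tilde\lambda=\tilde\beta$ as $n$-tuples if and only if $\lambda=\beta$ and $\mu=\eta$. In other words, the map $(\lambda,\mu)\mapsto\tilde\lambda$ is injective on pairs with $\ell(\lambda)+\ell(\mu)\le n$. This holds because the positive entries of $\tilde\lambda$, read in order, are exactly the nonzero parts of $\lambda$. Likewise, the negatives of the negative entries, read in reverse order, are exactly the nonzero parts of $\mu$. So $(\lambda,\mu)$ is recovered from $\tilde\lambda$; this is the identification of $\SPar$ with $n$-tuples in \cref{def:SPar}.

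Combining these steps, the two double sums agree term by term, which proves \cref{eq:isom}. I expect no serious obstacle. The only care needed is that a single $n$ must work simultaneously for all terms, which is exactly why the hypothesis takes the maximum over all pairs with nonzero coefficients.
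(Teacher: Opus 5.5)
Your proposal is correct and follows the paper's proof. Like the paper, you expand both sides by bilinearity, use \cref{prop:rhoschur} to identify $\rho_n(s_{\lambda,\mu})$ with $s^{(n)}_{\lambda,\mu}$, and then apply the orthonormality \cref{eq:SchurOGB}. Your explicit check that $(\lambda,\mu)\mapsto(\lambda_1,\dots,\lambda_r,0,\dots,0,-\mu_s,\dots,-\mu_1)$ is injective fills in a step the paper uses only implicitly when it collapses the double sum to $\sum\alpha_{(\lambda,\mu)}\gamma_{(\lambda,\mu)}$.
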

\begin{proof}
Let $n$ be as stated above.
We compute
    \begin{align*}
    \langle F,G\rangle &=
    \left \langle \sum_{\lambda,\mu \in \Par} \alpha_{(\lambda,\mu)}s_{\lambda,\mu},\sum_{\beta,\eta \in \Par} \gamma_{(\beta,\eta)}s_{\beta,\eta}\right \rangle \\
    &= \sum_{\lambda,\mu \in \Par}\sum_{\beta,\eta \in \Par} \alpha_{(\lambda,\mu)}\gamma_{(\beta,\eta)}\langle s_{\lambda,\mu}, s_{\beta,\eta}\rangle \\
    \eqalignref{eq:Hallip-lsc}\sum_{\lambda,\mu \in \Par}\alpha_{(\lambda,\mu)}\gamma_{(\lambda,\mu)}.
    \end{align*}
    On the other hand, by \cref{def:rho}, $\rho_n$ is linear, so we have
    \[
    \rho_n(F) = \rho_n\left(\sum_{\lambda,\mu \in \Par} \alpha_{(\lambda, \mu)} s_{\lambda,\mu}\right) = \sum_{\lambda,\mu \in \Par} \alpha_{(\lambda, \mu)}\rho_n(s_{\lambda,\mu}),
    \]
    similarly 
    \[\rho_n(G)=\sum \gamma_{ (\beta, \eta)} s_{\beta, \eta}^{(n)}.\]
    
    Hence,
    exploiting the orthogonality of the Schur polynomials in $n$ variables with respect to $\langle \cdot , \cdot \rangle_n$,
    the right hand side of \cref{eq:isom} becomes
    \begin{align*}
        \langle \rho_n(F),\rho_n(G)\rangle_n &=
    \left\langle \rho_n\left(\sum_{\lambda,\mu \in \Par} \alpha_{(\lambda,\mu)}s_{\lambda,\mu}\right),
    \rho_n\left(\sum_{\beta,\eta \in \Par} \gamma_{(\beta,\eta)}s_{\beta,\eta}\right)\right \rangle_n \\
    &= \sum_{\lambda,\mu \in \Par}\sum_{\beta,\eta \in \Par} \alpha_{(\lambda,\mu)}\gamma_{(\beta,\eta)}\langle \rho_n(s_{\lambda,\mu}), \rho_n(s_{\beta,\eta})\rangle_n \\ &\overset{(*)}{=}\sum_{\lambda,\mu \in \Par}\sum_{\beta,\eta \in \Par} \alpha_{(\lambda,\mu)}\gamma_{(\beta,\eta)}\langle s_{\lambda,\mu}^{(n)}, s_{\beta,\eta}^{(n)}\rangle_n \\
    &= \sum_{\lambda,\mu \in \Par}\alpha_{(\lambda,\mu)}\gamma_{(\lambda,\mu)},
    \end{align*}
    where we invoke in $(*)$ \cref{prop:rhoschur} and \cref{eq:SchurOGB}.
\end{proof}

\section{Representations of the general linear group}\label{sec:repGLn}

Throughout this section all vector spaces will be over \(\C\). Also we write \(\GL_n = \GL(n,\C)\).

\begin{defin}\label{def:rep}
    A \emph{representation} (over \(\C\)) of a group $G$ is a pair $(\rho, V)$, where $V$ is a vector space and $\rho$ is a group homomorphism 
    \begin{equation}
        \rho\colon G \longrightarrow \mathrm{GL}(V), \quad g \longmapsto \rho(g).  \label{eq:rho}
    \end{equation}
    When $\rho$ and $V$ are understood, we denote $\rho(g) \cdot v$ by $g \cdot v$, and refer to $\rho$ (or $V$) as the representation. 
\end{defin}

Note that if $V$ is finite dimensional, then by choosing a basis of \(V\), \cref{eq:rho} is the same as a group homomorphism \(\rho \colon G \rightarrow \GL_m \), where \(m = \dim V\). 

\begin{rem}
    The category of representations of a group $G$ can be identified with the category of $G$-modules. This identification can be used to define subrepresentations, quotients, tensor products and sums. However, we will give the definitions explicitly here, for greater clarity and note that they behave well with respect to this identification. 
\end{rem}

\begin{eg} \leavevmode
\begin{enum}
    \item For any $G$ and $V$, the homomorphism $\rho: g \mapsto \mathrm{Id}$ for all $g \in G$, where $\mathrm{Id}$ is the identity linear transformation $V \to V$, is the \emph{trivial representation}.
    \item Let $V = \C^n$ and $\rho = \mathrm{Id} \colon \GL_n \rightarrow \GL_n $. The pair $(\C^n, \mathrm{Id})$ is said to be a \emph{standard representation} of $\mathrm{GL}_n$.
\end{enum}
\end{eg}

\begin{defin}
    If $W \subseteq V$ is a vector subspace of $V$ invariant under $\rho_W(g) := \rho(g) \big|_W$ for all $g \in G$, then $(\rho_W, W)$ is a \emph{subrepresentation} of $(\rho, V)$. A representation $V$ is said to be \emph{irreducible} if it has no nontrivial subrepresentations. 
\end{defin}
\begin{defin}
    If $W$ is a subrepresentation of $V$, then $\rho$ induces an action on the quotient space $V/W$ given by 
    \begin{align}
        \tilde{\rho}(g) (v+W):=\rho(g)v+W. \label{eq:quo-rho}
    \end{align}
    The quotient space $V/W$ together with $\tilde\rho $ is said to be the \emph{quotient representation} $V/W$. 
\end{defin}
\begin{defin}\label{def:isom-rep}
    Two representations $(\rho, V)$ and $(\sigma, W)$ of $G$ are said to be \emph{isomorphic} if there exists a linear isomorphism $f\colon V \to W$ such that $f \circ \rho(g) =\sigma(g)\circ f$ for all $g \in G$. 
\end{defin}

\begin{defin}
    If $H \subseteq G$ is a subgroup of $G$, and $\rho: G \to \GL(V)$ is a representation of $G$, then $\rho$ restricts to a representation of $H$ as 
    \begin{equation}
        \rho|_{H}: H \to \GL(V), \quad h \mapsto \rho(h),
    \end{equation}
    which is said to be the \emph{restriction representation}, denoted $\mathrm{Res}^G_H \rho=\rho|_H$.
\end{defin}

\begin{defin} \label{def:pol/rat-rep}
    In finite dimensions, a representation $\rho\colon \GL_n \to \mathrm{GL}(V)$ of $\GL_n$ is said to be \emph{polynomial (resp.\ rational)} if, given a basis of $V$ with $m$ elements, the entries of the matrix of $\rho(g)$ are polynomial (resp.\ rational) in the matrix entries \(g_{i,j}\) for all $g \in \GL_n$. That is, for $1\leq i, j \leq m$, we have
    \begin{equation}
        \rho(g)_{ij}=f(g_{11}, g_{12}, \dots, g_{nn})
    \end{equation}
    for some $f \in \C[X_{11}, \dots, X_{nn}]$ (resp.\ $\C[X_{ij}, \det(X)^{-1}]$). Note that we use $\C[X_{ij}, \det(X)^{-1}]$ instead of $\C(X_{11}, \dots, X_{nn})$ because a rational representation must have matrix coefficients that are regular.
\end{defin}

\begin{rem}
    \cref{def:pol/rat-rep} is well defined and independent of the choice of basis since changing the basis yields a matrix whose coefficients are linear combinations of the coefficients of the matrix in the previous basis, and linear combinations of polynomial (resp.\ rational) functions are polynomial (resp.\ rational) functions. 
\end{rem}

\begin{defin}\label{def:char}
    Given a representation \(\rho\colon \GL_n \rightarrow \GL(V)\), with $V$ a finite-dimensional vector space, we define its \emph{character} \(\chi_\rho\) by
    \[
    \chi_\rho (g) = \mathrm{Tr}(\rho (g)),
    \]
    where \(\mathrm{Tr}\) denotes the trace of the matrix $\rho(g)$. In the following, given \(x_1,\dots,x_n \in \C^\ast\) then \(\chi_\rho (x_1,\dots,x_n)\) will be a shorthand for \(\chi_\rho (\mathrm{diag}(x_1,\dots,x_n))\), where \(\mathrm{diag}(x_1,\dots,x_n)\) is the diagonal matrix with entries \(x_1,\dots,x_n\).
\end{defin}

\begin{rem}
    The character is well defined because it is independent of the choice of the basis of $V$. If $C \rho(g) C^{-1}$ is the matrix representation of $\rho(g)$ in another basis, \[\mathrm{Tr}(C \rho(g) C^{-1})=\mathrm{Tr}(C^{-1} C \rho(g))=\mathrm{Tr}(\rho(g))\] by the cyclic property of the trace. 
\end{rem}

\begin{defin}\label{def:weight}
    Let $H\subseteq \GL_n$ be the subgroup of diagonal matrices, and $\rho \colon \GL_n \rightarrow \GL(V)$ be a representation of $\GL_n$. Let $t =$ diag$(t_1,\dots,t_n) \in H$.
    For any tuple $\alpha \in \Z^n$, we define an $\alpha$-induced function
    \begin{align*}
        \tilde\alpha \colon H &\longrightarrow \C,\\
        t &\longmapsto \tilde\alpha(t) = t_1^{\alpha_1}\cdots t_n^{\alpha_n}.
    \end{align*}
We say $v \in V, v \neq 0$ is a \emph{weight vector} with \emph{weight} $\alpha$ if
\begin{align}
    t\cdot v = \tilde\alpha (t) v = t_1^{\alpha_1}\cdots t_n^{\alpha_n} v \text{ for all } t \in H.
\end{align}
Let $V_\alpha := \{v \in V\,  \mid t \cdot v \, = \, \tilde\alpha(t) v, \text{ for all } t\in H \}$. If $V_\alpha \neq \{0\}$,  we say $V_\alpha$ is the \emph{weight space} of weight $\alpha$. 

Let $B$ be the subgroup of all upper triangular matrices in $\GL_n$, called the \emph{Borel subgroup}. A weight vector $v$ in a representation $V$ is called a \emph{highest weight vector} if
\begin{align*}
    B  v \subseteq \C^* v.
\end{align*}
\end{defin}
\begin{rem}\label{rk:starfruit}
    Every finite dimensional rational representation $V$ of $\GL_n$ can be described as a direct sum of its weight spaces,
\begin{align}
    V = \bigoplus_{\alpha \in \Z^n} V_\alpha, \label{eq:decomp}
\end{align}
(see \cite[\S 8.2, p.~116]{Ful97}).
Using \cref{eq:decomp}, the character $\chi_V (x)$ on $H$ may be expressed as
\begin{align}
    \chi_{V}(x) = \sum_{\alpha \in \Z^n}\dim(V_\alpha) x_1^{\alpha_1}\cdots x_n^{\alpha_n}, \quad x \in H. \label{eq:char}
\end{align}
For a proof, see, for example, \cite[\S 8.2, p.120]{Ful97}. Thus, we can regard the characters as Laurent polynomials. 
\end{rem}

\begin{prop}[{\cite[\S 8.2, Th.~2]{Ful97}}]
The rational irreducible representations of $\GL_n$ are indexed by their highest weights $  \alpha = (\alpha_1 \geq \dots \geq \alpha_n)$, where $\alpha_i \in \Z$. 
\end{prop}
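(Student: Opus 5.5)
The plan is to reduce to the polynomial case, which the paper takes from Fulton: irreducible polynomial representations of $\GL_n$ are the Schur modules $\Sc^\lambda(\C^n)$ for $\lambda \in \Par_n$, and $\lambda$ is their highest weight. We then pass to rational representations by twisting with powers of the determinant. There are two halves. Existence: for every weakly decreasing $\alpha \in \Z^n$ there is an irreducible rational representation with highest weight $\alpha$. Uniqueness: two irreducible rational representations with the same highest weight are isomorphic, and every irreducible rational representation has a highest weight of this form.

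For existence, take $\alpha = (\alpha_1 \geq \dots \geq \alpha_n)$ and choose $k \in \N$ with $\alpha + (k^n) \in \Par_n$; this is possible by \cref{rem:LS-nonLS}, since $\alpha$ is a signed partition in $\SPar_n$. Set $V = \Sc^{\alpha+(k^n)}(\C^n) \otimes D^{-k}$, where $D = \det$ is the one-dimensional rational representation. Tensoring with a one-dimensional representation preserves irreducibility, because subrepresentations correspond under tensoring back with $D^{k}$. It also shifts every weight by $(-k^n)$, since $\mathrm{diag}(t)$ acts on $D^{-k}$ by $(t_1\cdots t_n)^{-k}$. Likewise it preserves the highest weight vector property, because $B$ acts on $D^{-k}$ by scalars. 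Hence $V$ is irreducible and rational, with highest weight $\alpha$.

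For uniqueness and exhaustion, let $W$ be an irreducible rational representation. Its matrix coefficients lie in $\C[X_{ij}, \det(X)^{-1}]$ and are finitely many, so for $k$ large, $W \otimes D^{k}$ has polynomial matrix coefficients. It is therefore an irreducible polynomial representation, hence isomorphic to some $\Sc^\lambda(\C^n)$ with $\lambda \in \Par_n$. Consequently $W \cong \Sc^\lambda(\C^n) \otimes D^{-k}$ has highest weight $\lambda - (k^n)$, which is weakly decreasing. If $W, W'$ are irreducible with the same highest weight $\alpha$, choose a common $k$ that works for both. Then $W\otimes D^k$ and $W'\otimes D^k$ are irreducible polynomial representations with the same highest weight $\alpha+(k^n)$, so they are isomorphic by the polynomial classification. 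Tensoring with $D^{-k}$ then gives $W \cong W'$.

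The main obstacle is the step "$W\otimes D^k$ polynomial and irreducible $\Rightarrow$ isomorphic to a Schur module determined by its highest weight". This is exactly the polynomial classification, including the fact that the highest weight is well defined and unique. I would cite it from \cite[\S 8.2]{Ful97} rather than reprove it. As an alternative check of uniqueness, highest weight determines the character: by \cref{rk:starfruit} the character is a Laurent polynomial, and after twisting it equals $e_{-n}^k s_{\alpha+(k^n)} = s_\alpha$ by \cref{prop:schuren}. Since rational representations of $\GL_n$ are completely reducible and characters separate irreducibles, this also gives uniqueness.
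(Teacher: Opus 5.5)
Your argument is correct and is essentially the route behind the cited result. The paper gives no proof of its own beyond citing \cite[\S 8.2, Th.~2]{Ful97}, where the rational case is obtained by twisting with powers of $\det$ to reduce to the classification of irreducible polynomial representations $\Sc^\lambda(\C^n)$, and the paper later builds $\mathrm{R}_n^\lambda=\Sc^{\widehat\lambda}V_n\otimes\det^k$ exactly as you do. One minor point: your closing character-based ``alternative check'' is not independent, because knowing that the character of $W\otimes D^k$ is $s_{\alpha+(k^n)}$ already requires $W\otimes D^k\cong\Sc^{\alpha+(k^n)}(\C^n)$, i.e.\ the polynomial classification you had just invoked.
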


\begin{eg}\label{ex:siamese}
Consider $\mathrm{GL}_2(\C)$, and $\C[x,y]_m$ the vector space of homogeneous polynomials of degree \(m\), which is generated by \(\{x^m,x^{m-1}y,\dots,xy^{m-1},y^m\}\). The elements of the vector space are given by
\begin{align*}
    p(x,y) = \sum_{i=0}^ma_ix^iy^{m - i} \quad \text{with } a_i \in \C.
\end{align*}
By definition, elements of $\mathrm{GL}_2$ are of the form
\begin{align*}
    g = \begin{pmatrix}
        a & b\\c & d
    \end{pmatrix}, \quad\text{ with } ad - bc \neq 0.
\end{align*}
 We can define an action of $\mathrm{GL}_2$ on $\C[x,y]_m$ by 
 \begin{align}
     \begin{pmatrix}
         a&b\\c&d
     \end{pmatrix} \cdot f(x, y)
     = f(ax + cy, bx + dy). \label{eq:action}
 \end{align}
Let $T \subset \mathrm{GL}_2(\C)$ be the torus of diagonal matrices ($b, c = 0$). For any $g = \begin{pmatrix} a & 0 \\ 0 & d \end{pmatrix} \in T$, the action yields $x \mapsto ax, y \mapsto dy$. 
Thus, for every $i \in \{0,\dots,m\}$, the elements $k\,x^iy^{m - i}$ ($k \in \C^*$) are weight vectors of weight $(i, m-i)$. 

Indeed, these are the only weight vectors for $T$. If a general polynomial $p(x,y) = \sum_{j=0}^m c_j x^j y^{m-j}$ is a weight vector for $T$, there must exist a character $\lambda \colon T \to \C^*$ such that $g \cdot p(x,y) = \lambda(g) p(x,y)$ for all $g \in T$. This requires
\begin{align*}
    \sum_{j=0}^m c_j a^j d^{m-j} x^j y^{m-j} = \sum_{j=0}^m c_j \lambda(g) x^j y^{m-j} \quad \forall a, d \in \C^*,
\end{align*}
which implies $c_j a^j d^{m-j} = c_j \lambda(g)$ for all $j$. Therefore, all non-zero terms in $p(x,y)$ must correspond to the exact same weight character $a^j d^{m-j}$ for every $a, d \in \C^*$. This holds simultaneously for all elements of $T$ if and only if $c_j = 0$ for all but a single index $i$. Thus, non-zero monomials $k\,x^iy^{m - i}$ are the only weight vectors of $T$.

Note that in particular, elements of the form $k x^m$ are highest weight vectors, since $a \neq 0$ for all $g \in B$. We claim there is no other highest weight vector, for suppose $k\,x^iy^{m - i}$ is a weight vector of highest weight, then for all
\begin{equation*}
    g=\begin{pmatrix}
         a & b \\
         0 & d
     \end{pmatrix} \in B,
\end{equation*}
we have 
\begin{equation}
    \lambda_g \, k\,x^iy^{m - i} =  g \cdot k\,x^iy^{m - i} = k\,a^ix^i(bx+dy)^{m-i}, \quad \lambda_g \in \C^*. \label{eq:peach}
\end{equation}
If $i<m$, $(bx+dy)^{m-i}$ expands into $\sum_j (bx)^j (dy)^{m-i-j}$, but $\lambda_g \, k\,x^iy^{m - i}$ is a monomial. Since $a, b, d$ are arbitrary, the first equality in \cref{eq:peach} does not hold unless $i=m$.
\end{eg}
In \cref{ex:siamese}, the representation with the map in \cref{eq:action} and the vector space $\C[x, y]_m$ is what we will later call \(\Sc^{(m)} \C^2\).

For \(\lambda\in\Par_n\), let \([\lambda]\) denote the set of boxes in the
Young diagram of \(\lambda\). For a vector space \(V\), define
\[
    V^{\times\lambda}:=\prod_{b\in[\lambda]} V .
\]
We refer to elements of \(V^{\times\lambda}\) as \emph{fillings} of the Young diagram
of \(\lambda\) by vectors of \(V\), i.e. an assignment of a vector \(v_b\in V\)
to each box \(b\in[\lambda]\). Choose a numbering $T$ of the boxes of the Young diagram of $\lambda$ by the numbers $1,\dotsc,r$, where \(r = |\lambda|\). Then from a filling $(v_1, \dots, v_r)$, where $v_i, \, 1\leq i \leq r$ is the vector in the box labeled by $i \in \{1, \dots, r\}$, we have a map 
\begin{align}
    V^{\times \lambda} \longrightarrow V^{\otimes r}, \quad (v_1, \dots, v_r) \longmapsto v_1 \otimes \cdots \otimes v_r.
\end{align}

\begin{defin}\label{def:ysds}
Fix a numbering of the $r$ boxes of $T$ by the set $\{1,\ldots,r\}$.
For each row $R$ of $T$, let $I_R\subseteq\{1,\ldots,r\}$ denote the
set of labels of the boxes in $R$, and for each column $C$ of $T$, let
$I_C\subseteq\{1,\ldots,r\}$ denote the set of labels of the boxes in $C$. Define  $R_T$ and $C_T$ to be the subgroups of $\mathfrak S_r$ preserving
the rows and columns of $T$ setwise, respectively. That is,
\begin{subequations}
\begin{align}
    R_T &= \{\sigma\in\mathfrak S_r : \sigma(I_R)=I_R
    \text{ for every row }R\text{ of }T\}, \\
    C_T &= \{\sigma\in\mathfrak S_r : \sigma(I_C)=I_C
    \text{ for every column }C\text{ of }T\}.
\end{align}
\end{subequations}
Let the symmetric group $\mathfrak S_r$ act on $V^{\otimes r}$ by
permuting tensor factors. Define endomorphisms of $V^{\otimes r}$
    \begin{align}
        a_T = \sum_{\sigma \in R_T} \sigma,
        \qquad
        b_T = \sum_{\sigma \in C_T} \sgn(\sigma)\sigma.
    \end{align}
Thus $a_T$ symmetrizes tensor factors lying in the same row of $T$, while $b_T$ antisymmetrizes tensor factors lying in the same column of $T$. 

For each pair $a_T, b_T$, the corresponding \emph{Young symmetrizer} is
    \begin{align}
         y_T = b_T a_T \in \C \mathfrak{S}_r.
    \end{align}
\end{defin}
Note that some authors, such as Weyman in \cite{Wey03}, define the Young symmetrizer as \[y_T = a_T b_T \in \C \mathfrak{S}_r,\] which gives a Schur module isomorphic to ours. 
\begin{defin}
    We define the \emph{Schur module} to be
    \begin{align}
        \Sc^\lambda V
        =
        \im\left(y_T \colon V^{\otimes r} \longrightarrow V^{\otimes r}\right). \label{eq:sm}
    \end{align}
\end{defin}

Later we will define a \(\GL (V)\)-representation on the vector space $\Sc^\lambda(V)$, hence the term \textit{Schur module}.

\begin{rem}
     Since $\im(y_T)$ is naturally isomorphic to $V^{\otimes r}/\ker(y_T)$, \cref{eq:sm} realizes $\Sc^\lambda V$ as a subquotient of $V^{\otimes r}$.
\end{rem}

\begin{prop}[{\cite[\S 4.1, Eq. ~(4.12), \S 4.2, Lem.~4.26]{Ful91}}]
    The element $y_T$ is a nonzero scalar multiple of an idempotent,
    \begin{align}
        y_T^2 = h_\lambda y_T,
    \end{align}
    where $h_\lambda$ is the product of the hook lengths (see \cref{def:hl}) of the boxes of $\lambda$. 
\end{prop}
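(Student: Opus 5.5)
We plan to prove $y_T^2 = h_\lambda y_T$ in the group algebra $\C\mathfrak S_r$ by the classical argument: first show that $y_T$ spans the space of elements with a certain two-sided equivariance, and then compute the scalar by taking a trace.

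\textbf{Step 1 (characterization of $y_T$).} Let $x \in \C\mathfrak S_r$ satisfy
\[
q\,x\,p = \sgn(q)\,x \qquad \text{for all } p \in R_T,\ q \in C_T .
\]
We claim that $x$ is a scalar multiple of $y_T$. The key combinatorial input is von Neumann's lemma. If $g \notin C_T R_T$, then some two labels lie in the same row of $T$ and in the same column of $g T$. Equivalently, there is a transposition $t \in R_T$ with $g^{-1} t g \in C_T$. This is proved by a pigeonhole argument comparing rows of $T$ with columns of $gT$, sliding entries into place.

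Write $x = \sum_g c_g g$. The equivariance gives $c_{qgp} = \sgn(q)\, c_g$. For $g \notin C_T R_T$, choose $t$ as above and put $q = g t g^{-1}$, which lies in $C_T$. Then $qgt = g$ forces $c_g = -c_g$, so $c_g = 0$. For $g = qp$ we get $c_{qp} = \sgn(q)\, c_e$. Since $C_T \cap R_T = \{e\}$, each element of $C_T R_T$ has a unique factorization $qp$. Hence $x = c_e\, y_T$.

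\textbf{Step 2 (apply to $y_T^2$).} Clearly $b_T\, q = \sgn(q)\, b_T$ and $p\, a_T = a_T$. Therefore $y_T^2 = b_T a_T b_T a_T$ satisfies the equivariance from Step 1. It follows that $y_T^2 = n_T\, y_T$ for some scalar $n_T \in \C$.

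\textbf{Step 3 (computing $n_T$).} This is the main obstacle. Let $F$ be right multiplication by $y_T$ on $\C\mathfrak S_r$. Since the coefficient of $e$ in $y_T$ is $1$, the trace of $F$ is $r!$. On the other hand, by Step 2, $F^2 = n_T F$, so $F$ acts as $n_T$ on its image $\C\mathfrak S_r\, y_T$ (left multiplication commutes with right multiplication). Hence
\[
\operatorname{tr}(F) = n_T \dim\bigl(\C\mathfrak S_r\, y_T\bigr),
\]
which gives $n_T = r!/\dim(\C\mathfrak S_r\, y_T)$. In particular $n_T \neq 0$, so $y_T$ is a nonzero multiple of an idempotent.

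It remains to identify $\dim(\C\mathfrak S_r\, y_T)$ with the dimension $f^\lambda$ of the Specht module, the irreducible representation attached to $\lambda$. The plan is to invoke the standard theory cited from \cite{Ful91}: the left ideal $\C\mathfrak S_r\, y_T$ is irreducible and corresponds to $\lambda$. Its dimension is then given by the hook length formula $f^\lambda = r!/h_\lambda$, and substituting yields $n_T = h_\lambda$. If a more self-contained route is wanted, one can instead use Step 1 to see that $\C\mathfrak S_r\, y_T$ is irreducible, since $y_T\,\C\mathfrak S_r\, y_T = \C y_T$. The hook length formula itself would still be quoted.
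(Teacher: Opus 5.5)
Your argument is the one the paper relies on. The paper gives no proof of its own, only the citation, and you reproduce exactly the cited Fulton--Harris argument: Lemma 4.25 characterizes $y_T$ by its two-sided equivariance, Lemma 4.26 is the trace computation giving $y_T^2 = \bigl(r!/\dim \C\mathfrak S_r y_T\bigr)\, y_T$, and Eq.~(4.12) is the hook length formula. You have adapted all of this correctly to the paper's ordering $y_T = b_T a_T$.

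There is one slip to repair in Step 1. As stated, your combinatorial lemma (``if $g\notin C_TR_T$, there is a transposition $t\in R_T$ with $g^{-1}tg\in C_T$'') is false. For $g=pq\in R_TC_T$ its conclusion would put the transposition $p^{-1}tp$ in $R_T\cap C_T=\{e\}$, which is impossible. So the lemma would force $R_TC_T\subseteq C_TR_T$, and this already fails for $\lambda=(2,1)$. It also does not give the condition $q=gtg^{-1}\in C_T$ that you actually use.

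The fix is straightforward. The condition $g^{-1}tg\in C_T$ is what the pigeonhole argument produces under the hypothesis $g\notin R_TC_T$. Since $g\notin C_TR_T$ if and only if $g^{-1}\notin R_TC_T$, apply it to $g^{-1}$ to get a transposition $t\in R_T$ with $gtg^{-1}\in C_T$. With that, your computation $qgt=g$, hence $c_g=-c_g$, goes through unchanged.

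If you would rather quote Fulton--Harris verbatim, note that they use $a_Tb_T$. The anti-automorphism $g\mapsto g^{-1}$ of $\C\mathfrak S_r$ fixes $a_T$ and $b_T$, so it exchanges $a_Tb_T$ and $b_Ta_T$, and both satisfy the same relation $y^2=h_\lambda y$.
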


It follows that one may equivalently use the idempotent
\begin{align}
    e_T = \frac{1}{h_\lambda} y_T,
\end{align}
to get
\begin{align}\label{eq:eTV}
    \Sc^\lambda V
        =
        e_T\left(V^{\otimes r}\right).
\end{align}

\begin{prop}
Given a linear map \(\phi \colon V \rightarrow W\), define 
\begin{align}
    \phi^{\otimes r}\colon V^{\otimes r} \longrightarrow W^{\otimes r}, \quad v_1 \otimes \cdots \otimes v_r \longmapsto \phi(v_1) \otimes \cdots \otimes \phi(v_r),
\end{align}
and extend linearly. Let
\begin{align}
\Sc^\lambda(\phi)\colon \Sc^\lambda(V) \longrightarrow  \Sc^\lambda(W
), \quad \Sc^\lambda(\phi)=\phi^{\otimes r}|_{\Sc^\lambda(V)}. \label{eq:sf}
\end{align}
The map \cref{eq:sf} is well defined and the assignments 
$V \mapsto \Sc^\lambda(V), \phi \mapsto \Sc^\lambda(\phi)$
define a functor.
\end{prop}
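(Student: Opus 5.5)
The plan is to reduce everything to one commutation relation: $\phi^{\otimes r}$ commutes with the action of $\mathfrak S_r$ by permutation of tensor factors. First I will check this on pure tensors. For $\sigma \in \mathfrak S_r$ acting by $\sigma(v_1\otimes\cdots\otimes v_r) = v_{\sigma^{-1}(1)}\otimes\cdots\otimes v_{\sigma^{-1}(r)}$ (or whichever convention is fixed; only consistency matters), both $\phi^{\otimes r}\circ\sigma$ and $\sigma\circ\phi^{\otimes r}$ send $v_1\otimes\cdots\otimes v_r$ to the same permuted tensor of the $\phi(v_i)$. Since pure tensors span $V^{\otimes r}$ and both maps are linear, the two maps agree.

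Extending linearly over $\C\mathfrak S_r$, I get $\phi^{\otimes r}\circ y_T = y_T\circ \phi^{\otimes r}$, where $y_T = b_Ta_T$ is viewed as an endomorphism of $V^{\otimes r}$ and of $W^{\otimes r}$ respectively. The same numbering $T$ of the diagram of $\lambda$ is used for both $V$ and $W$, which is legitimate because $y_T$ depends only on $T$ and not on the vector space.

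Well-definedness then follows. If $u = y_T(w)$ with $w\in V^{\otimes r}$, then $\phi^{\otimes r}(u) = y_T(\phi^{\otimes r}(w))\in \im(y_T|_{W^{\otimes r}}) = \Sc^\lambda(W)$. So the restriction $\phi^{\otimes r}|_{\Sc^\lambda(V)}$ indeed lands in $\Sc^\lambda(W)$, and \cref{eq:sf} defines a linear map.

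For functoriality I need two facts. The first is $(\mathrm{id}_V)^{\otimes r} = \mathrm{id}_{V^{\otimes r}}$, which holds on pure tensors, so its restriction is $\mathrm{id}_{\Sc^\lambda V}$. The second is $(\psi\circ\phi)^{\otimes r} = \psi^{\otimes r}\circ\phi^{\otimes r}$ for $\psi\colon W\to U$, again checked on pure tensors. Restricting to $\Sc^\lambda(V)$, and using that $\phi^{\otimes r}$ maps $\Sc^\lambda(V)$ into $\Sc^\lambda(W)$ (so that the composite of restrictions makes sense), gives $\Sc^\lambda(\psi\circ\phi) = \Sc^\lambda(\psi)\circ\Sc^\lambda(\phi)$.

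The only point needing care, and the expected obstacle, is the commutation relation in the first step, which requires fixing the convention for the $\mathfrak S_r$-action. That step is still routine. Finally, I would remark that taking $V=W$ and $\phi=g\in\GL(V)$ makes $\Sc^\lambda(g)$ invertible with inverse $\Sc^\lambda(g^{-1})$, by functoriality. This gives the promised $\GL(V)$-representation.
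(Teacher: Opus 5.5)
Your proposal is correct and follows the same route as the paper. Both arguments check that $\phi^{\otimes r}$ commutes with each $\sigma\in\mathfrak S_r$ on pure tensors, deduce $\phi^{\otimes r}\circ y_T=y_T\circ\phi^{\otimes r}$, and conclude that $\phi^{\otimes r}$ maps $\Sc^\lambda(V)$ into $\Sc^\lambda(W)$; your explicit checks of the identity and composition axioms, which the paper only calls easily checked, and your remark that $\Sc^\lambda(g^{-1})$ inverts $\Sc^\lambda(g)$, are both correct.
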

\begin{proof}
Let $\xi \in \Sc^\lambda(V)$. We want to show that \cref{eq:sf} is well defined. That is,  
\begin{align*}
    \left(\phi^{\otimes r}\right)(\xi) \in \Sc ^\lambda (W)=y_{T, W}(W^{\otimes r}).
\end{align*}
Since $\xi \in y_{T, V}(V^{\otimes r})$, there exists some $\zeta \in V^{\otimes r}$ such that $\xi=y_{T, V}(\zeta)$. If $\phi^{\otimes r}\circ y_{T, V}= y_{T, W} \circ \phi^{\otimes r}$, then
\begin{align*}
\left(\phi^{\otimes r}\right) (\xi) &=\left(\phi^{\otimes r}\circ y_{T, V}\right)(\zeta) \\
&= \left(y_{T, W} \circ \phi^{\otimes r}\right)(\zeta) \\
&= y_{T, W} \left( \phi^{\otimes r}(\zeta)\right) \in \Sc^\lambda(W),
\end{align*}
where we have used that $\phi^{\otimes r}(\zeta) \in W^{\otimes r}$. 
It remains to show $\phi^{\otimes r}\circ y_{T, V}= y_{T, W} \circ \phi^{\otimes r}$. Indeed, since the Young symmetrizers are linear combinations of $\sigma \in \mathfrak{S}_r$, we only need to show $\phi^{\otimes r}$ commutes with $\sigma$. As both $\phi^{\otimes r}$ and $\sigma$ are linear, it suffices to show that the commutation holds on an arbitrary pure tensor $v_1 \otimes \cdots \otimes v_r \in V^{\otimes r}$. But because $\phi^{\otimes r}$ acts by applying $\phi$ to each factor $v_i, 1 \leq i \leq r$ in the tensor power $v_1 \otimes \cdots \otimes v_r$, clearly $\phi^{\otimes r}$ commutes with permutation of the indices. Thus $\phi^{\otimes r}\circ y_{T, V}= y_{T, W} \circ \phi^{\otimes r}$, as desired. 

Finally, it is easily checked that \(\Sc^\lambda (\mathrm{Id}_V) = \mathrm{Id}_{\Sc^\lambda V}\) and \(\Sc^\lambda (\varphi \circ \psi ) = \Sc^\lambda(\varphi) \circ \Sc^\lambda (\psi)\) for any linear maps \(\psi \colon V \rightarrow W\) and \(\varphi \colon W \rightarrow U\) between vector spaces \(V,W\), and \(U\).
\end{proof}

\begin{defin}
    For a representation $(\rho, V)$, a representation on the Schur module $\Sc^\lambda(V)$ is given by 
    \begin{equation}
        \mathbb{S}^\lambda(\rho) \colon \GL_n \rightarrow \GL (\mathbb{S^\lambda }V), \quad \mathbb{S}^\lambda(\rho)(g) := \mathbb{S}^\lambda(\rho(g)),
    \end{equation} where the right hand side is defined by \cref{eq:sf}.
\end{defin}

\begin{rem}
    There are, in fact, many other equivalent ways to define the Schur module (see, for example, \cite[\S 8]{Ful97}). It is shown (up to the Young symmetrizer convention) in \cite[Lem.~2.2.13]{Wey03} that \cref{def:ysds} is equivalent to the one given in \cite[\S 8]{Ful97}.
\end{rem}

\begin{prop}
    The $\GL_n$-module \cref{eq:eTV} is independent of the choice of $T$ up to isomorphism.
    \end{prop}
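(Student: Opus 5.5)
Two numberings $T$ and $T'$ of the same Young diagram of $\lambda$ differ by a unique permutation $\tau \in \gS_r$: if box $b$ carries label $i$ in $T$, it carries label $\tau(i)$ in $T'$. The plan is to show that conjugation by $\tau$ carries $y_T$ to $y_{T'}$, and then that the action of $\tau$ on $V^{\otimes r}$ intertwines the two images as $\GL(V)$-representations.

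First, the labels of a row $R$ in $T'$ are $\tau(I_R)$, and similarly for columns. Hence $\sigma \in R_T$ if and only if $\tau\sigma\tau^{-1} \in R_{T'}$, so
\[
R_{T'} = \tau R_T \tau^{-1}, \qquad C_{T'} = \tau C_T \tau^{-1}.
\]
Conjugation preserves the sign of a permutation. Therefore $a_{T'} = \tau a_T \tau^{-1}$ and $b_{T'} = \tau b_T \tau^{-1}$ in $\C\gS_r$, and so $y_{T'} = \tau y_T \tau^{-1}$. The same relation holds for the idempotents $e_T$ and $e_{T'}$, since $h_\lambda$ depends only on $\lambda$.

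Next, let $\tau$ also denote the induced operator on $V^{\otimes r}$ permuting tensor factors; it is invertible. We get
\[
e_{T'}(V^{\otimes r}) = \tau e_T \tau^{-1}(V^{\otimes r}) = \tau\bigl(e_T(V^{\otimes r})\bigr),
\]
so $\tau$ restricts to a linear isomorphism $\Sc^\lambda_T V \to \Sc^\lambda_{T'} V$. To check that this map is $\GL_n$-equivariant, we need $\tau \circ \rho(g)^{\otimes r} = \rho(g)^{\otimes r} \circ \tau$. This is exactly the commutation of $\phi^{\otimes r}$ with permutations established in the functoriality proposition above. Thus $\Sc^\lambda(\rho)(g)$ on the $T'$-module is $\tau$ composed with $\Sc^\lambda(\rho)(g)$ on the $T$-module composed with $\tau^{-1}$, which is an isomorphism in the sense of \cref{def:isom-rep}.

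The only step needing care is the bookkeeping of the convention for how $\gS_r$ acts on $V^{\otimes r}$: whether $\sigma$ sends the $i$-th factor to position $\sigma(i)$ or $\sigma^{-1}(i)$. With the opposite convention, conjugation would instead give $y_{T'} = \tau^{-1} y_T \tau$. I would fix the convention that $\sigma$ moves factor $i$ to position $\sigma(i)$, so that the map $\gS_r \to \GL(V^{\otimes r})$ is a homomorphism, and then choose $\tau$ or $\tau^{-1}$ to match. Either way the argument goes through, since all that is used is that $\C\gS_r$ acts on $V^{\otimes r}$ via an algebra homomorphism whose image commutes with every $\phi^{\otimes r}$.
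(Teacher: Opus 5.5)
Your proof is correct, and it takes a different route from the paper's.

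\textbf{The paper's route.} The paper argues in one line by citation. For $\ell(\lambda)\le n$, it says $e_T(V^{\otimes r})$ is the unique irreducible polynomial $\GL_n$-module of highest weight $\lambda$, by Fulton's classification, so any two numberings give isomorphic modules. For $\ell(\lambda)>n$ the module is zero.

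\textbf{Your route.} You never touch highest-weight theory. You show $R_{T'}=\tau R_T\tau^{-1}$ and $C_{T'}=\tau C_T\tau^{-1}$, so $y_{T'}=\tau y_T\tau^{-1}$ in $\C\gS_r$. You then use that the permutation operator $\tau$ on $V^{\otimes r}$ commutes with every $\phi^{\otimes r}$. Hence $\tau$ restricts to an explicit $\GL$-equivariant isomorphism $\Sc^\lambda_T V\to\Sc^\lambda_{T'}V$.

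\textbf{What your approach buys.}
\begin{itemize}
\item It is self-contained and needs no case split on $\ell(\lambda)$.
\item It works for an arbitrary vector space $V$ and representation $\rho$. It does not even need $h_\lambda$ to be invertible if you phrase it with $\operatorname{im} y_T$.
\item Since $\tau$ commutes with $\phi^{\otimes r}$ for every linear map $\phi$, it gives a natural isomorphism of functors, not just an abstract isomorphism of $\GL_n$-modules.
\end{itemize}

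\textbf{What the paper's approach buys.} It is shorter given the citation, and it records that the module is irreducible of highest weight $\lambda$, which the paper uses afterwards. However, it presupposes that $\operatorname{im} y_T$ is irreducible of highest weight $\lambda$ for \emph{every} numbering $T$. That is a heavier input than the statement requires, and connecting an arbitrary $T$ to Fulton's construction is essentially your conjugation step.

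\textbf{A small imprecision in your convention remark.} The identity $y_{T'}=\tau y_T\tau^{-1}$ holds in the group algebra regardless of convention. Only the operator representing it changes. If $\gS_r$ acted through an anti-homomorphism $\theta$, the operator would be $\theta(\tau)^{-1}\theta(y_T)\theta(\tau)$. This still maps one image bijectively onto the other, so the argument survives either convention, not only for algebra homomorphisms.
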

    \begin{proof}This follows from the fact that \cref{eq:eTV} is the unique (up to isomorphism) irreducible polynomial $\GL_n$-module of highest weight $\lambda$, provided $\ell(\lambda) \leq n$ {\cite[\S 8.2 Thm. 2]{Ful97}}.  If $\ell(\lambda) > n$, then $\Sc^\lambda V = 0$. 
\end{proof}

\begin{theo}[{\cite[\S8.3 (14)]{Ful97}}]\label{prop:char-sc}
    Let $\lambda \in \Par_n$. Then for all \(x_1,\dots,x_n \in \C^\ast\) we have 
    \begin{align}
        \chi_{\mathbb{S}^\lambda V_n}(\mathrm{diag}(x_1,\dots,x_n)) = s_\lambda(x_1,\dots,x_n) \label{eq:char-sc}
    \end{align}
\end{theo}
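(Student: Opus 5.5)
The plan is to compute the character of $\Sc^\lambda V_n$ on the diagonal torus by decomposing it into weight spaces, as in \cref{rk:starfruit}. This reduces \cref{eq:char-sc} to a counting statement: the weights of $\Sc^\lambda V_n$ should correspond to the semistandard Young tableaux of shape $\lambda$ with entries in $\{1,\dots,n\}$. We then compare with the combinatorial formula $s_\lambda=\sum_{T}x^{T}$, where the sum runs over those tableaux. Since the paper defines $s_\lambda$ as the bialternant $a_{\lambda+\delta^n}/a_{\delta^n}$, we will quote the equality of the bialternant and tableau formulas from \cite[Ch.~I, (5.12)]{Mac15}.

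The steps are as follows.

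First, let $e_1,\dots,e_n$ be the standard basis of $V_n=\C^n$. For $t=\mathrm{diag}(x_1,\dots,x_n)$, the element $t^{\otimes r}$ acts on $e_{i_1}\otimes\cdots\otimes e_{i_r}$ by the scalar $x_{i_1}\cdots x_{i_r}$. So $V_n^{\otimes r}$ is a direct sum of weight spaces. The operator $y_T$ is a combination of permutations, and by the functoriality proposition it commutes with $t^{\otimes r}$. Hence $y_T$ preserves each weight space, and
\[
\Sc^\lambda V_n=\bigoplus_\alpha y_T\big((V_n^{\otimes r})_\alpha\big).
\]
The character is therefore $\sum_\alpha \dim\big(y_T((V_n^{\otimes r})_\alpha)\big)\,x^\alpha$.

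Second, we show that, for each $\alpha$, the images $y_T(e_S)$ form a basis of $y_T((V_n^{\otimes r})_\alpha)$. Here $S$ ranges over the semistandard tableaux of content $\alpha$, and $e_S$ is the pure tensor obtained by placing $e_{S(b)}$ in box $b$ of $T$. Granting this, $\dim\big(y_T((V_n^{\otimes r})_\alpha)\big)$ equals the Kostka number $K_{\lambda\alpha}$, and the character becomes $\sum_S x^S=s_\lambda(x_1,\dots,x_n)$.

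Third, to pass from the torus back to \cref{eq:char-sc}, observe that $\chi(\mathrm{diag}(x))$ is by definition the trace of $\Sc^\lambda(\mathrm{diag}(x))$. So the identity of Laurent polynomials evaluates directly to the required equality for all $x_i\in\C^*$.

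The main obstacle is the second step, the basis (straightening) theorem for Schur modules. The standard approach has two halves. For spanning, use Garnir/exchange relations to rewrite any $y_T(e_S)$ with $S$ non-semistandard as a combination of $y_T(e_{S'})$ with $S'$ smaller in a suitable order. For independence, use a leading-term argument with respect to a lexicographic order on fillings. Rather than redo this, I would cite the semistandard basis of $\Sc^\lambda V$ from \cite[\S8.1, Thm.~1]{Ful97}, transported via the isomorphism of constructions in \cite[Lem.~2.2.13]{Wey03}, and check only that this isomorphism is $\GL_n$-equivariant, so that it preserves weight spaces.

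As a fallback that avoids straightening, I would use $V_n^{\otimes r}\cong\bigoplus_\mu (\Sc^\mu V_n)^{\oplus f^\mu}$ (Schur--Weyl duality). Comparing with $p_1^r=\sum_\mu f^\mu s_\mu$ and using the uniqueness of the irreducibles with a given highest weight identifies each character. However, this needs its own induction on dominance order, so I would prefer the citation route.
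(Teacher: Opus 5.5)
Your proposal is correct and is essentially the argument behind the paper's citation. The paper gives no proof of its own and simply defers to \cite[\S8]{Ful97}, where the character is computed exactly as you do: from a basis of torus weight vectors indexed by semistandard tableaux, combined with the tableau formula $s_\lambda=\sum_S x^S$. One point to tidy: transporting Fulton's basis through a $\GL_n$-equivariant isomorphism gives some weight basis of $\Sc^\lambda V_n$ with the correct weights, not literally the vectors $y_T(e_S)$, but the resulting count $\dim(\Sc^\lambda V_n)_\alpha=K_{\lambda\alpha}$ is all your computation needs.
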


\begin{defin}
    For \(k \in \Z\), the \emph{determinant representations} over \(\C\), denoted by \(\det^k\), are given by
\begin{align}
    g \cdot v = \det(g)^k v \label{eq:det-rep}, \qquad \text{for all } v \in \C.
 \end{align}
\end{defin}
 Note that for \(k \in \N\), this is a polynomial representation, and for \(k \in \Z_{<0}\) 
it is a rational representation, as in \cref{def:pol/rat-rep}.

    \begin{defin}
    Let $V_1, V_2$ be two vector spaces equipped with representations $\rho_1 \colon \GL_n \rightarrow \GL(V_1) $ and $\rho_2 \colon \ GL_n \rightarrow \GL(V_2) $. The \emph{tensor product} of these representations is the representation $\rho \colon \GL_n \rightarrow \GL(V_1 \otimes V_2)$ given by 
    \begin{align}
        \rho(g) (v_1 \otimes v_2) = \rho_1(g)(v_1) \otimes \rho_2(g)(v_2).
    \end{align}
\end{defin}

\begin{prop}\label{prop:isom}
    Let $\lambda, \widehat{\lambda} \in \Par_n$ and $k,\widehat{k} \in \Z$ such that $\lambda + (k^n) = \widehat{\lambda} + (\widehat{k}^n) $. Then 
    \begin{align}
        \Sc^\lambda V_n\otimes \det\nolimits^k \cong
        \Sc^{\widehat{\lambda}} V_n \otimes \det\nolimits^{\widehat{k}}.
    \end{align}
\end{prop}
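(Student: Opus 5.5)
Without loss of generality assume $k \geq \widehat{k}$ and set $d := k - \widehat{k} \in \N$. Then $\widehat{\lambda} = \lambda + (d^n)$. Tensoring with $\det^{\widehat{k}}$ is compatible with isomorphisms, and $\det^{a} \otimes \det^{b} \cong \det^{a+b}$ via $v \otimes w \mapsto vw$. So it suffices to construct an isomorphism of $\GL_n$-representations
\[
\Sc^\lambda V_n \otimes \det\nolimits^{d} \cong \Sc^{\lambda + (d^n)} V_n ,
\]
and then tensor both sides with $\det^{\widehat{k}}$.

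To prove this reduced claim, I would use the characterization already invoked in the proof that $\Sc^\lambda V$ is independent of $T$. Namely, by \cite[\S 8.2 Thm.~2]{Ful97}, $\Sc^\mu V_n$ for $\mu \in \Par_n$ is the unique irreducible polynomial representation of highest weight $\mu$. The plan has three steps.

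First, show that $\Sc^\lambda V_n \otimes \det^d$ is a polynomial representation. Its matrix entries are those of $\Sc^\lambda(g)$ multiplied by $\det(g)^d$, and $\det(g)^d$ is a polynomial in the $g_{ij}$ since $d \geq 0$.

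Second, show that it is irreducible. A subrepresentation $W \otimes \det^d$ corresponds, after tensoring with $\det^{-d}$, to a subrepresentation $W$ of $\Sc^\lambda V_n$. Since the underlying vector space is unchanged and the action is only rescaled by a nonzero scalar depending on $g$, invariant subspaces coincide.

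Third, identify the highest weight. If $v$ is a highest weight vector of $\Sc^\lambda V_n$ with weight $\lambda$, then $v \otimes 1$ is again a weight vector. Its weight is $\lambda + (d^n)$, because $\det(\mathrm{diag}(t_1,\dots,t_n))^d = t_1^d \cdots t_n^d$. It is still a highest weight vector, because $B(v\otimes 1) = \{\det(b)^d\, bv \otimes 1\} \subseteq \C^* (v \otimes 1)$. Uniqueness in \cite[\S 8.2 Thm.~2]{Ful97} then gives the claimed isomorphism.

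As a sanity check, which could also serve as an alternative argument, compare characters. By \cref{prop:char-sc} and \cref{prop:schuren}, the character of $\Sc^\lambda V_n \otimes \det^k$ at $\mathrm{diag}(x_1,\dots,x_n)$ is
\[
e_n^k s_\lambda = s_{\lambda+(k^n)} = s_{\widehat{\lambda}+(\widehat{k}^n)} = e_n^{\widehat{k}} s_{\widehat{\lambda}},
\]
which is the character of the right-hand side. This route would only conclude isomorphism once one knows that rational representations of $\GL_n$ are determined by their characters, i.e.\ complete reducibility plus linear independence of the $s_\mu$ (\cref{theo:Schur-basis}). For that reason I prefer the highest-weight route.

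The main obstacle is the appeal to the uniqueness part of the highest-weight classification. I would make sure the cited theorem asserts that an irreducible polynomial representation with a highest weight vector of weight $\mu$ is isomorphic to $\Sc^\mu V_n$, not merely that such representations exist. The remaining steps are routine.
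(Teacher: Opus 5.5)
Your proposal is correct and follows essentially the same route as the paper, which simply declares the result an immediate consequence of \cite[\S 8.2, Th.~2]{Ful97}. You make that citation explicit by reducing to $\Sc^\lambda V_n\otimes\det^{d}\cong\Sc^{\lambda+(d^n)}V_n$ with $d\geq 0$, then checking that the left side is an irreducible polynomial representation with highest weight $\lambda+(d^n)$; the uniqueness you worry about is part of that same classification (irreducible polynomial representations of $\GL_n$ are determined up to isomorphism by their highest weight, as the paper also uses when showing independence of $T$), so your argument closes.
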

\begin{proof}
    This is an immediate consequence of \cite[\S 8.2, Th.~2]{Ful97} (note that Fulton denotes $\Sc^\lambda V_n$ by $E^\lambda$ and $\mathrm{det}^k$ by $D^{\otimes k}$).   
\end{proof}

    \begin{defin}\label{def:elratrep}
    For \(\lambda \in \SPar_n\) we define the \emph{irreducible rational representation $R_n^\lambda$ of highest weight $\lambda$} as 
    \begin{equation}
        \mathrm{R}_n^\lambda = \Sc^{\widehat{\lambda}} V_n \otimes \det^k \label{eq:turquoise}
    \end{equation}
    where \(\widehat{\lambda} \in \Par_n\) and \(k \in \Z\) are such that \(\lambda = \widehat{\lambda} + (k^n)\). 
\end{defin}

Note that $k \in \Z$ can be negative, so $\lambda=\widehat{\lambda}+(k^n)$ can indeed be every signed partition in $\SPar_n$. Moreover, $\mathrm{R}^\lambda$ is well defined up to isomorphism by \cref{prop:isom}. 

\begin{prop}[{\cite[\S 8.2]{Ful97}}]
    For every \(\lambda \in \SPar_n\), the representation \(\mathrm{R}_n^\lambda\) of \(\GL_n\) is irreducible. Furthermore, every irreducible rational representation of \(\GL_n\) is isomorphic to one of the \(\mathrm{R}_n^\lambda\).
\end{prop}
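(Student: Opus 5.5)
The statement has two parts: each \(\mathrm{R}_n^\lambda\) is irreducible, and every irreducible rational representation of \(\GL_n\) is isomorphic to some \(\mathrm{R}_n^\lambda\). I would reduce both parts to the polynomial case, using tensoring with a determinant power to move between polynomial and rational representations.

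\emph{Irreducibility.} Write \(\lambda = \widehat{\lambda} + (k^n)\) with \(\widehat{\lambda} \in \Par_n\). By \cite[\S 8.2, Th.~2]{Ful97}, \(\Sc^{\widehat{\lambda}} V_n\) is irreducible. Since \(\det^k\) is one-dimensional, a subspace \(W \subseteq \Sc^{\widehat{\lambda}} V_n \otimes \det^k\) is \(\GL_n\)-stable if and only if the corresponding subspace \(W \otimes \det^{-k}\) of \(\Sc^{\widehat{\lambda}} V_n\) is stable. Here we identify \((\Sc^{\widehat{\lambda}} V_n \otimes \det^k) \otimes \det^{-k} \cong \Sc^{\widehat{\lambda}} V_n\), using that \(\det^k \otimes \det^{-k}\) is the trivial representation. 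So any nontrivial proper subrepresentation of \(\mathrm{R}_n^\lambda\) would give one of \(\Sc^{\widehat{\lambda}} V_n\), which is impossible. By \cref{prop:isom}, the conclusion does not depend on the choice of \(k\).

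\emph{Exhaustion.} Let \(\rho\colon \GL_n \to \GL(W)\) be an irreducible rational representation with \(W\) finite dimensional. Its matrix coefficients lie in \(\C[X_{ij}, \det(X)^{-1}]\), and there are finitely many of them. Hence there is \(d \in \N\) such that \(\det(g)^d \rho(g)_{ij}\) is a polynomial in the \(g_{ij}\) for all \(i,j\), which means \(W \otimes \det^d\) is a polynomial representation. By the argument in the previous paragraph, \(W \otimes \det^d\) is still irreducible. The classification of irreducible polynomial representations \cite[\S 8.2, Th.~2]{Ful97} then gives \(W \otimes \det^d \cong \Sc^{\mu} V_n\) for some \(\mu \in \Par_n\). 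Tensoring with \(\det^{-d}\) yields \(W \cong \Sc^\mu V_n \otimes \det^{-d} = \mathrm{R}_n^{\mu - (d^n)}\), and \(\mu - (d^n) \in \SPar_n\).

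\emph{Main obstacle.} The substantive input is the polynomial classification, which I would cite rather than reprove. The one point needing care is the claim that clearing denominators produces a genuinely polynomial representation, which the definition above settles directly. As a consistency check on the labelling, the highest weight of \(\mathrm{R}_n^\lambda\) is \(\widehat{\lambda} + (k^n) = \lambda\): weights shift by \((k^n)\) under tensoring with \(\det^k\), and on characters this matches \cref{prop:schuren}.
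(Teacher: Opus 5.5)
Your proposal is correct and follows essentially the same route as the paper, which gives no proof of its own and simply cites Fulton \S 8.2, where the rational case rests on the same reduction you use. Twisting by a suitable power of $\det$ preserves irreducibility and turns a rational representation into a polynomial one. Both the irreducibility of $\mathrm{R}_n^\lambda = \Sc^{\widehat{\lambda}} V_n \otimes \det^k$ and the exhaustion statement then follow from the irreducibility and classification of the polynomial modules $\Sc^{\mu} V_n$, $\mu \in \Par_n$.
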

Thus, the irreducible rational representations of $\GL_n$ are indexed by signed partitions of length at most $ n$.

\begin{prop}
    For every $\lambda \in \SPar_n$ and \(x_1,\dots,x_n \in \C^\ast\) the character of the irreducible rational representation \(\mathrm{R}^\lambda\) is given by
    \begin{align}
        \chi_{R_n^\lambda} = s_\lambda
    \end{align} 
    as Laurent polynomials in $x_1, \dots, x_n$.
\end{prop}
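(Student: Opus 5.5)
The plan is to reduce to the polynomial case via \cref{def:elratrep}. Given $\lambda \in \SPar_n$, I write $\lambda = \widehat{\lambda} + (k^n)$ with $\widehat{\lambda} \in \Par_n$ and $k \in \Z$, so that $\mathrm{R}_n^\lambda = \Sc^{\widehat{\lambda}} V_n \otimes \det^k$. By \cref{prop:isom}, isomorphic representations have the same character, since traces are invariant under conjugation by the intertwining isomorphism of \cref{def:isom-rep}. Hence the character does not depend on the choice of $(\widehat{\lambda},k)$, and I may fix one such decomposition. For instance, $k = \lambda_n$ and $\widehat{\lambda} = \lambda - (\lambda_n^n)$ works, because then $\widehat{\lambda}_n = 0$ and all parts of $\widehat{\lambda}$ are nonnegative.

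Next I compute the character of a tensor product. If $\rho = \rho_1 \otimes \rho_2$, then in a product basis $\rho(g)$ is the Kronecker product of $\rho_1(g)$ and $\rho_2(g)$. Its trace is therefore $\mathrm{Tr}(\rho_1(g))\,\mathrm{Tr}(\rho_2(g))$, so $\chi_{V_1\otimes V_2} = \chi_{V_1}\chi_{V_2}$. The determinant representation $\det^k$ is one-dimensional, so its character at $\mathrm{diag}(x_1,\dots,x_n)$ is $(x_1\cdots x_n)^k = e_n^k$. This holds for negative $k$ as well, where $e_n^{k} = e_{-n}^{-k}$.

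Combining these with \cref{prop:char-sc} gives, for $x_1,\dots,x_n \in \C^\ast$,
\[
\chi_{\mathrm{R}_n^\lambda}(x_1,\dots,x_n) = s_{\widehat{\lambda}}(x_1,\dots,x_n)\, e_n^k = s_{\lambda}(x_1,\dots,x_n).
\]
The last equality is \cref{prop:schuren} applied with the integer $-k$ to the signed partition $\widehat{\lambda}$. That proposition gives $s_{\widehat{\lambda}} = e_{-n}^{-k} s_{\widehat{\lambda}+(k^n)}$, and so $s_{\widehat{\lambda}} e_n^{k} = s_\lambda$.

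Finally, I need to upgrade the equality of functions on $(\C^\ast)^n$ to an equality of Laurent polynomials. I plan to use \cref{rk:starfruit}: the character restricted to diagonal matrices is itself a Laurent polynomial. Two Laurent polynomials that agree on all of $(\C^\ast)^n$ are equal, because after clearing denominators by a power of $e_n$ they become polynomials agreeing on a Zariski-dense set.

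I expect this last identification to be the only delicate point, and even it is routine. Everything else is a direct combination of \cref{prop:isom}, \cref{prop:char-sc}, and \cref{prop:schuren}.
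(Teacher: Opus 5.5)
Your proof is the paper's argument: write $\mathrm{R}_n^\lambda=\Sc^{\widehat\lambda}V_n\otimes\det^k$, use that characters multiply under tensor products, then apply \cref{prop:char-sc} and \cref{prop:schuren}. Your extra remarks on independence of the choice of $(\widehat\lambda,k)$, and on identifying functions on $(\C^\ast)^n$ with Laurent polynomials, are correct and only make explicit what the paper leaves implicit.

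There is one slip in the last step. \cref{prop:schuren} applied with the integer $-k$ gives $s_{\widehat\lambda}=e_{-n}^{-k}s_{\widehat\lambda-(k^n)}$. The identity you wrote, $s_{\widehat\lambda}=e_{-n}^{-k}s_{\widehat\lambda+(k^n)}$, is false for $k\neq 0$: for $n=1$, $\widehat\lambda=(0)$, $k=1$ it reads $1=x_1^2$. It would also yield $s_\lambda=e_n^{-k}s_{\widehat\lambda}$, not the identity you concluded. Apply the proposition with $k$ itself instead. This gives $s_{\widehat\lambda}=e_{-n}^{k}s_{\widehat\lambda+(k^n)}=e_{-n}^k s_\lambda$, i.e.\ $e_n^k s_{\widehat\lambda}=s_\lambda$, which is exactly the equality you need.
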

\begin{proof}
    Recall \(\mathrm{R}^\lambda = \Sc^{\widehat{\lambda}} V_n \otimes \det^k \). For any representations \(\rho_V \colon \GL_n \longrightarrow \GL(V)\), $ \rho_W \colon \GL_n \rightarrow \GL(W)\) we have $\chi_{\rho_{V \otimes W}} = \chi_{\rho_V}\cdot \chi_{\rho_W}$, where \(\rho_{V\otimes W}\) denotes the tensor representation of \(\rho_V\) and \(\rho_W\). For \(x = \mathrm{diag}(x_1,\dots,x_n)\),

    \begin{align*}
    \chi_{\mathrm{R}^\lambda}(x)
    \eqalignref{eq:turquoise}
    \chi_{\mathbb{S}^{\widehat\lambda}(\C^n)}(x)\chi_{\det^k}(x)\\
    \eqalignref{eq:char-sc}
    s_{\widehat{\lambda}}(x_1,\dots,x_n)(x_1\cdots x_n)^k\\
    &=
    e_n^k s_{\widehat{\lambda}}(x_1,\dots,x_n)\\
    \eqalignref{eq:schuren}
    s_{\widehat{\lambda}+(k^n)}(x_1,\dots,x_n)\\
    &=
    s_\lambda(x_1,\dots,x_n).
    \end{align*}
\end{proof}

\begin{defin}\label{def:dual-rep}
   Let $V^\vee=\Hom(V,\C)$. For a representation $(\rho,V)$ of a group $G$, the \emph{dual representation}
is the pair $(\rho^\vee,V^\vee)$, where
\begin{align}
    \bigl(\rho^\vee(g)f\bigr)(v)
=
f\bigl(\rho(g^{-1})v\bigr) \label{eq:dual-rep}
\end{align}
for all $g\in G$, $f\in V^\vee$, and $v\in V$.
\end{defin}

If $V$ is finite dimensional, then \cref{eq:dual-rep} implies
\begin{align}
        \rho^\vee(g)=\rho^t(g^{-1}),
\end{align}
where $\rho^t(g^{-1})$ denotes the transpose of $\rho(g^{-1})$.

\begin{prop}[{\cite[Th.~II.4.1]{ABW82}}]
Let $V_n$ be the standard representation. For a partition $\lambda$ with $\ell(\lambda) \le n$, we have
\begin{align}
    \Sc^\lambda (V_n^\vee) \cong (\Sc^\lambda V_n)^\vee.
\end{align}
\end{prop}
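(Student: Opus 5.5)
We build an explicit isomorphism rather than compare characters. This way the argument holds for any $n$ (indeed for any finite-dimensional $V$), and it is natural in $V$.

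\emph{Step 1 (duality on tensor powers).} Let $r=|\lambda|$. There is a canonical linear isomorphism
\[
\Phi\colon (V^\vee)^{\otimes r}\longrightarrow (V^{\otimes r})^\vee,\qquad
\Phi(f_1\otimes\cdots\otimes f_r)(v_1\otimes\cdots\otimes v_r)=\prod_{i=1}^r f_i(v_i).
\]
It is bijective because $V$ is finite dimensional: a basis and its dual basis give matching bases on the two sides. It is $\GL(V)$-equivariant, since $g$ acts on $V^\vee$ by $f\mapsto f\circ g^{-1}$, which is \cref{eq:dual-rep}. For $\sigma\in\mathfrak S_r$ one checks on pure tensors that
\[
\Phi\circ\sigma=(\sigma^{-1})^{t}\circ\Phi,
\]
where $(\cdot)^t$ is the transpose map on $(V^{\otimes r})^\vee$. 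Extending linearly gives $\Phi\circ y_T=(\iota(y_T))^t\circ\Phi$, where $\iota$ is the antipode $\sigma\mapsto\sigma^{-1}$ of $\C\mathfrak S_r$. Since $\sgn(\sigma^{-1})=\sgn(\sigma)$ and $R_T,C_T$ are subgroups, $\iota(a_T)=a_T$ and $\iota(b_T)=b_T$. Because $\iota$ is an anti-automorphism, $\iota(y_T)=\iota(b_Ta_T)=a_Tb_T$.

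\emph{Step 2 (identify the image).} Using Step 1,
\[
\Phi(\Sc^\lambda V^\vee)=\Phi\bigl(y_T (V^\vee)^{\otimes r}\bigr)=\im\bigl((a_Tb_T)^t\bigr).
\]
The image of a transpose is the annihilator of the kernel, so this equals
\[
\ker(a_Tb_T)^{\perp}\cong\bigl(V^{\otimes r}/\ker(a_Tb_T)\bigr)^\vee\cong\bigl(\im(a_Tb_T)\bigr)^\vee ,
\]
and all of these identifications are $\GL(V)$-equivariant. Hence
\[
\Sc^\lambda(V^\vee)\cong\bigl(\im(a_T b_T\colon V^{\otimes r}\to V^{\otimes r})\bigr)^\vee .
\]

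\emph{Step 3 (switch conventions).} It remains to show $\im(a_Tb_T)\cong\im(b_Ta_T)=\Sc^\lambda V$ as $\GL(V)$-modules; this is the remark following \cref{def:ysds} about Weyman's convention, and I would prove it directly. Right multiplication by $b_T$ maps $\im(b_Ta_T)\to\im(a_Tb_T)$ via $x\mapsto a_Tx$: precisely, $b_Ta_Tw\mapsto a_Tb_Ta_Tw$, whose image lies in $\im(a_Tb_T)$. Left multiplication by $b_T$ gives a map the other way, $a_Tb_Tu\mapsto b_Ta_Tb_Tu$. Both maps commute with $\GL(V)$, since they are given by elements of $\C\mathfrak S_r$. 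Their composite $\im(b_Ta_T)\to\im(b_Ta_T)$ is left multiplication by $b_Ta_T=y_T$. On $\im(y_T)$ this acts as multiplication by $h_\lambda\neq0$, because $y_T^2=h_\lambda y_T$. The same holds for the other composite, using $(a_Tb_T)^2=h_\lambda a_Tb_T$; that identity follows by applying $\iota$ to $y_T^2=h_\lambda y_T$. Hence both maps are isomorphisms.

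The main obstacle is bookkeeping in Step 1: verifying that transposition reverses the order of products and inverts permutations, so the dual picks up $a_Tb_T$ rather than $b_Ta_T$. Step 3 then resolves this reversal.

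A cheaper fallback is available if that bookkeeping becomes awkward. Both sides are irreducible rational $\GL_n$-modules: the right side because the dual of an irreducible is irreducible. Their characters agree, since both equal $s_\lambda(x_1^{-1},\dots,x_n^{-1})$: the character of $V_n^\vee$ at $\mathrm{diag}(x)$ has eigenvalues $x_i^{-1}$, so \cref{prop:char-sc} applies after substitution. By \cref{prop:gamma} this common character is $s_{\lambda^\vee}$, and the classification of irreducible rational representations by highest weight then gives the isomorphism.
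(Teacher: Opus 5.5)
Your proof is correct, and it takes a genuinely different route from the paper. The paper gives no argument of its own here. It cites Akin--Buchsbaum--Weyman, Thm.~II.4.1, a characteristic-free duality for free modules over any commutative ring that exchanges their Schur and Weyl functors when one passes to the dual module. It then relies on the remark that the two functors coincide in characteristic zero.

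You stay inside the paper's own Young-symmetrizer model. Transporting $\sigma$ through $\Phi$ gives $(\sigma^{-1})^t$, so dualizing replaces $y_T=b_Ta_T$ by $a_Tb_T$. The intertwiners $x\mapsto a_Tx$ and $u\mapsto b_Tu$ then identify $\im(a_Tb_T)$ with $\im(b_Ta_T)=\Sc^\lambda V$, since their composites are multiplication by $h_\lambda$ by $y_T^2=h_\lambda y_T$ and its antipodal image. The swap $b_Ta_T\leftrightarrow a_Tb_T$ is the Young-symmetrizer form of the Schur/Weyl distinction. Your Step 3 is therefore exactly the characteristic-zero identification that the paper only asserts, and it shows that the sole input needed is $h_\lambda\neq 0$.

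Your route gives several things:
\begin{itemize}
\item a self-contained argument using only the definition of $\Sc^\lambda$ and the quasi-idempotence already quoted in the paper;
\item an explicit isomorphism natural in $V$, valid for every finite-dimensional $V$ (including $\ell(\lambda)>\dim V$, where both sides vanish);
\item as a by-product, a proof of the paper's unproved remark that Weyman's convention $a_Tb_T$ yields an isomorphic module.
\end{itemize}
The citation gives a characteristic-free statement. The price is matching ABW's definition (images of maps between tensor products of exterior and symmetric powers) with the Young-symmetrizer definition, which the paper leaves implicit.

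There are two minor slips. In Step 3, the first map is left multiplication by $a_T$, not ``right multiplication by $b_T$''; your formula $b_Ta_Tw\mapsto a_Tb_Ta_Tw$ is the correct one. In the fallback, irreducibility of $\Sc^\lambda(V_n^\vee)$ needs a word: it is $\Sc^\lambda\C^n$ twisted by the automorphism $g\mapsto (g^t)^{-1}$ of $\GL_n$. Alternatively, complete reducibility of rational representations together with linear independence of the characters $s_\mu$ would let you avoid irreducibility altogether.
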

\begin{rem}
In \cite{ABW82}, the authors distinguish between Schur and Weyl functors. However, since we are working over a field of characteristic zero, these two functors are isomorphic.
\end{rem}
 
Recall from \cref{prop:char-sc} that $\chi_{\mathbb{S}^\lambda V_n}(x_1, \dots, x_n) = s_\lambda(x_1, \dots, x_n)$.
\begin{prop}\label{prop:char-dual}
    Let $\lambda \in \Par_n$, and $x$ be a shorthand for the variables $x_1, \dots, x_n$. Then, we have 
    \begin{align}
        \chi_{(\mathbb{S}^\lambda V_n)^\vee}(x) =s_\lambda(x_1^{-1}, \dots, x_n^{-1})=  \Gamma(\chi_{\mathbb{S}^\lambda V_n}(x)), \label{eq:cd2}
    \end{align}
    where $\Gamma$ is the map defined in \cref{prop:gamma}.
\end{prop}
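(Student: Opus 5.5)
The plan is to compute the character of the dual representation directly from the definition $\rho^\vee(g)=\rho(g^{-1})^t$, and then identify the result with $\Gamma$ applied to \cref{eq:char-sc}. Write $\rho\colon \GL_n\to\GL(\Sc^\lambda V_n)$ for the representation $\Sc^\lambda(\mathrm{Id})$. Fix $x=\mathrm{diag}(x_1,\dots,x_n)$ with $x_i\in\C^\ast$.

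First, since $\Sc^\lambda V_n$ is finite dimensional, a choice of basis together with its dual basis gives
\[
\chi_{(\Sc^\lambda V_n)^\vee}(x)=\mathrm{Tr}\bigl(\rho(x^{-1})^t\bigr)=\mathrm{Tr}\bigl(\rho(x^{-1})\bigr)=\chi_{\Sc^\lambda V_n}(x^{-1}).
\]
This uses only that a matrix and its transpose have the same trace. Next, $x^{-1}=\mathrm{diag}(x_1^{-1},\dots,x_n^{-1})$ is again diagonal with entries in $\C^\ast$. Hence \cref{prop:char-sc} applies at that point and gives $\chi_{\Sc^\lambda V_n}(x^{-1})=s_\lambda(x_1^{-1},\dots,x_n^{-1})$, which is the first equality in \cref{eq:cd2}.

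For the second equality, \cref{prop:char-sc} identifies $\chi_{\Sc^\lambda V_n}(x)$ with the Laurent polynomial $s_\lambda(x_1,\dots,x_n)\in\LSym_n$. By the definition of $\Gamma$ as the substitution $x_i\mapsto x_i^{-1}$ in \cref{eq:gamma}, we get $\Gamma(s_\lambda(x))=s_\lambda(x_1^{-1},\dots,x_n^{-1})$. One can also note, via \cref{prop:gamma}, that this equals $s_{\lambda^\vee}(x)$.

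The only delicate point is to make sure the equalities hold as Laurent polynomials and not merely as functions on the torus. Here I would use that a Laurent polynomial over $\Z$ is determined by its values on $(\C^\ast)^n$, an infinite set in each coordinate. Alternatively, one can argue through the weight decomposition of \cref{rk:starfruit}: the dual basis of a weight basis of weight $\alpha$ consists of weight vectors of weight $-\alpha$, so $\dim (V^\vee)_{-\alpha}=\dim V_\alpha$. Either route is routine, so I do not expect a real obstacle.
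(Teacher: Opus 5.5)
Your proposal is correct and matches the paper's proof essentially step for step: use $\rho^\vee(g)=\rho(g^{-1})^t$ and the fact that transposition preserves the trace to get $\chi_{W^\vee}(\mathrm{diag}(x))=\chi_W(\mathrm{diag}(x)^{-1})$, then apply \cref{prop:char-sc} at $\mathrm{diag}(x_1^{-1},\dots,x_n^{-1})$ and identify the result with $\Gamma$. Your remark that the identity holds as Laurent polynomials, and not just as functions on the torus, is a harmless refinement that the paper leaves implicit.
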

\begin{proof}
Let $W=\Sc^\lambda V_n$ and let
$\operatorname{diag}(x) = \operatorname{diag}(x_1,\dots,x_n)$. By \cref{def:dual-rep},
\[
\rho_{W^\vee}(\operatorname{diag}(x))=\rho_W(\operatorname{diag}(x)^{-1})^t.
\]
Therefore,
\[
\begin{aligned}
\chi_{W^\vee}(\operatorname{diag}(x))
&=\mathrm{Tr}\bigl(\rho_W(\operatorname{diag}(x)^{-1})^t\bigr)\\
&=\mathrm{Tr}\bigl(\rho_W(\operatorname{diag}(x)^{-1})\bigr)\\
&=\chi_W(\operatorname{diag}(x)^{-1})\\
&=s_\lambda(x_1^{-1},\dots,x_n^{-1}).
\end{aligned}
\]
The last equality follows from \cref{prop:char-sc}, and the equality with
$\Gamma(\chi_W(x))$ follows from \cref{prop:gamma}.
\end{proof}

Recall from \cref{prop:relsch} that for $\lambda \in \SPar_n$, we have $s_\lambda(x_1^{-1}, \dots, x_n^{-1}) = s_{\lambda^{\vee}}(x_1,\dots,x_n)$, where $\lambda^\vee= ( - \lambda_n, \dots, -\lambda_1) \in \SPar_n$. For this reason, $\lambda^{\vee}$ is called the \emph{dual partition of $\lambda$}. Note that this definition is independent of \(n\) since if we write \(\lambda = (\mu,\nu)\) it is easily checked that \(\lambda^\vee = (\nu,\mu)\).

\section{The infinite dimensional general linear group} \label{sec:repGLinf}
In this section, we restrict our attention to algebraic representations of $\mathrm{GL}_\infty$. Note that the standard (natural) representation $V$ and its restricted dual $V_\vee$ are infinite-dimensional. While the trivial representation (and its finite direct sums) are finite-dimensional, every non-trivial simple algebraic representation of $\mathrm{GL}_\infty$ is infinite-dimensional.

\begin{defin}
    We define the \emph{infinite dimensional general linear group} as
    \begin{align}
        \GL_\infty=\varinjlim_{n\geq1}\GL_n,
    \end{align}
    where the maps for the direct limit are given by
    \begin{align}
    \GL_n \hookrightarrow\GL_{n+1}, \quad M \longmapsto \begin{pmatrix}
            M & 0 \\
            0 & 1
        \end{pmatrix}.
    \end{align}
\end{defin}
Since the natural maps \(\GL_n \hookrightarrow \GL (\C^\infty)\) commute with the arrows in the direct limit, we get an injection \(\GL_\infty \hookrightarrow \GL(\C^\infty)\), and we can identify $\GL_\infty$ with a subgroup of $\GL(\C^\infty)$, which allows us to work with elements of $\GL_\infty$ as infinite matrices. Thus, \(\GL_\infty\) consists of infinite invertible matrices that differ from the identity matrix in only finitely many entries. 

\begin{defin}
    In the following we let $V = \C^\infty$ be the \emph{natural representation} (i.e. the $\GL_\infty$ representation with $\rho$ the injection into \(\GL(\C^\infty)\) previously mentioned). The \emph{restricted dual} of $V$ is
    \begin{align}
        V_\vee = \varinjlim_{n \geq 1} (\C^n)^\vee
    \end{align}
    where the maps for the direct limit are given by
    \begin{align}
        (\C^n)^\vee \hookrightarrow (\C^{n+1})^\vee, \quad
        e_i^\vee \longmapsto e_i^\vee
    \end{align}
    where $e_i^\vee$, $1 \leq i \leq n$ denotes the dual basis to the standard basis of $\C^n$. We may identify each \(e_n^\vee \in (\C^n)^\vee\) with its image in \(V_\vee\) (this makes sense since the maps \((\C^n)^\vee \rightarrow V_\vee \) are injective), then the \(e_n^\vee\), \(n \in \N_{\geq 1}\) form a basis of \(V_\vee\).
\end{defin}

     Note also that each \((\C^n)^\vee\) comes with a representation of \(\GL_n\) (the dual of the standard one) and the maps \((\C^n)^\vee \hookrightarrow (\C^{n + 1})^\vee\) commute with the action of \(\GL_n\) (where \((\C^{n + 1})^\vee\) is given by the restriction representation from \(\GL_{n + 1}\) to \(\GL_n\)). This induces a representation of \(\GL_\infty\) on \(V_\vee\).

\begin{defin}
    For $n,m \in \N$, let $T_{n,m} := V^{\otimes n}\otimes (V_\vee)^{\otimes m}$. An \emph{algebraic representation} of $\GL_\infty$ is a representation that is a subquotient (i.e. a quotient of a subrepresentation) of finite direct sums of $T_{n,m}$.
\end{defin}

Note that algebraic representations of $\GL_\infty$ are the infinite dimensional analogue of rational representations of $\GL_n$. Henceforth, all representations of $\GL_\infty$ we consider will be algebraic unless otherwise indicated.

\begin{defin}
    Let $(V,\rho)$ be a representation of $\GL_\infty$. A \emph{weight vector $v \in  V$} is a nonzero vector, such that there exists a weight $\alpha = (\alpha_1, \alpha_2, \dots )$ with $\alpha_i \in \Z$  and an $r \in \N$ such that $\alpha_i = 0 $ for all $i > r$, and, for every $s\geq r$ and every $g = \operatorname{diag}(g_1, \dots, g_s, 1, 1 , \dots)\in\GL_\infty$, 
    \begin{align*}
        \rho(g) v = g_1 ^{\alpha_1} \cdots g_r^{\alpha_r} v=g_1^{\alpha_1}\cdots g_s^{\alpha_s} v.
    \end{align*}
\end{defin}

\begin{prop}[{\cite[\S 3.1.7]{SS15}}]
    An algebraic representation of $\GL_\infty$ decomposes into weight spaces.
\end{prop}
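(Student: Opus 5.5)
The plan is to reduce to the two generators $V$ and $V_\vee$ and then pass through tensor products, direct sums, subrepresentations and quotients. Let $H_\infty$ be the diagonal subgroup of $\GL_\infty$, i.e.\ the direct limit of the diagonal tori $H_n \subseteq \GL_n$. For each weight $\alpha$ (integer sequence with finite support), let $W_\alpha$ be the set of $w \in W$ with $t\cdot w = \tilde\alpha(t)w$ for all $t \in H_\infty$. Here $\tilde\alpha(t)=\prod_i t_i^{\alpha_i}$, a finite product. The claim to prove is $W=\bigoplus_\alpha W_\alpha$ for every algebraic representation $W$.

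\textbf{Generators and tensor products.} For $V$, the standard basis vector $e_i$ is a weight vector of weight $\varepsilon_i$, since $\operatorname{diag}(g_1,\dots,g_s,1,\dots)$ sends $e_i$ to $g_ie_i$ (or to $e_i$ if $i>s$). For $V_\vee$, by \cref{def:dual-rep} each $e_i^\vee$ has weight $-\varepsilon_i$. Both spaces therefore have bases of weight vectors. In $T_{n,m}$, a pure tensor of basis vectors $e_{i_1}\otimes\cdots\otimes e_{i_n}\otimes e^\vee_{j_1}\otimes\cdots\otimes e^\vee_{j_m}$ is a weight vector of weight $\sum_k\varepsilon_{i_k}-\sum_l\varepsilon_{j_l}$, which has finite support. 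These tensors form a basis, so $T_{n,m}$ is spanned by weight vectors. The same holds for finite direct sums of the $T_{n,m}$.

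\textbf{Directness.} Next I check that the sum of the $W_\alpha$ is direct in any representation of $\GL_\infty$. Suppose $\sum_{\alpha\in F} w_\alpha=0$ with $F$ finite and $w_\alpha \in W_\alpha$. Choose $s$ so that all $\alpha\in F$ are supported in $\{1,\dots,s\}$. The distinct $\alpha|_{[1,s]}$ give distinct characters of the torus $H_s\cong(\C^\ast)^s$. By linear independence of distinct characters (Artin/Dedekind), every $w_\alpha=0$. So any representation spanned by weight vectors is the direct sum of its weight spaces.

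\textbf{Subrepresentations.} Let $U\subseteq W$ be a subrepresentation, with $W=\bigoplus W_\alpha$. Take $u\in U$ and write $u=\sum_{\alpha\in F}w_\alpha$, a finite sum, with $s$ as above. Pick $t^{(1)},\dots,t^{(N)}\in H_s$ for which the matrix $\bigl(\tilde\alpha(t^{(k)})\bigr)_{k,\alpha}$ is invertible; such elements exist again by independence of characters. Each $t^{(k)}u=\sum_\alpha\tilde\alpha(t^{(k)})w_\alpha$ lies in $U$. Inverting the matrix writes each $w_\alpha$ as a linear combination of the $t^{(k)}u$, so $w_\alpha\in U$. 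Hence $U=\bigoplus_\alpha (U\cap W_\alpha)$.

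\textbf{Quotients.} For the quotient, the projection $W\to W/U$ is $H_\infty$-equivariant. It therefore maps $W_\alpha$ into $(W/U)_\alpha$, and these images span $W/U$. By the directness step, $W/U=\bigoplus_\alpha (W/U)_\alpha$. Combining all steps, every subquotient of a finite direct sum of $T_{n,m}$ decomposes into weight spaces.

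The main obstacle is the subrepresentation step, since it needs the separation argument. The key point is that only finitely many weights occur in any single vector, so everything reduces to the finite torus $H_s$. Equivalently, one could apply the standard fact (\cref{rk:starfruit}) that rational $H_s$-modules are semisimple.
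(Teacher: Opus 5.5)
Your proof is correct. The paper gives no argument of its own here; it only cites \cite[\S 3.1.7]{SS15}. So what you have written replaces that citation with a self-contained proof rather than varying a proof in the text.

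Your route is the natural one:
\begin{itemize}
\item you exhibit explicit weight bases of $V$ and $V_\vee$, with weights $\varepsilon_i$ and $-\varepsilon_i$ via \cref{def:dual-rep};
\item you pass to $T_{n,m}$ and finite direct sums using pure tensors;
\item you show that the property $W=\bigoplus_\alpha W_\alpha$ is inherited by subrepresentations and by quotients.
\end{itemize}
The key observation, which you isolate correctly, is that a single vector involves only finitely many finitely supported weights. Hence both directness and the extraction of weight components inside a subrepresentation reduce to the finite torus $H_s$ and Dedekind independence of characters.

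Compared with the citation, your approach gives a proof that uses only the paper's own definitions. It also shows why the word ``algebraic'' matters. The property is not automatic for arbitrary representations of $\GL_\infty$: the full dual $\Hom(V,\C)$, for instance, is not spanned by weight vectors, since $\sum_i e_i^\ast$ is not a finite sum of them. What your argument verifies is exactly closure under subquotients of finite sums of $T_{n,m}$. The citation, in turn, is shorter and places the fact inside the framework of \cite{SS15} that the paper relies on for the $V_{\lambda,\mu}$ and for characters.

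One small caveat concerns only your closing aside. \cref{rk:starfruit} is about finite-dimensional rational $\GL_n$-representations, whereas $W$ restricted to $H_s$ is infinite-dimensional. Invoking it would first require noting that $W$ is locally finite and rational as an $H_s$-module, which is true for subquotients of sums of $T_{n,m}$. Your main argument does not depend on this.
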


\begin{prop}[{\cite[Prop. 3.1.4]{SS15}}]
    For every signed partition \((\lambda,\mu)\) there is an irreducible representation \(V_{\lambda,\mu}\) of \(\GL_\infty\). Furthermore, these form an irredundant complete set of irreducible representations.
\end{prop}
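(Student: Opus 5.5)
The statement has two parts: existence of an irreducible $V_{\lambda,\mu}$ for each signed partition, and completeness and irredundancy. For existence I will build $V_{\lambda,\mu}$ explicitly inside $T_{n,m}$ with $n=|\lambda|$ and $m=|\mu|$. Apply the Schur functors to get $\Sc^\lambda V\otimes \Sc^\mu V_\vee$, which is a direct summand of $T_{n,m}$ via Young symmetrizers. Then take the intersection of the kernels of all $nm$ contraction maps
\[
T_{n,m}\longrightarrow T_{n-1,m-1}.
\]
Each contraction pairs one $V$ factor with one $V_\vee$ factor using the evaluation $V\otimes V_\vee\to\C$. These maps are $\GL_\infty$-equivariant, so the resulting \emph{traceless part} $V_{\lambda,\mu}$ is a subrepresentation, and it is algebraic by construction.

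For irreducibility I will use the direct limit structure. Set $V_{\lambda,\mu}^{(N)} := V_{\lambda,\mu}\cap \big((\C^N)^{\otimes n}\otimes ((\C^N)^\vee)^{\otimes m}\big)$; then $V_{\lambda,\mu}=\bigcup_N V_{\lambda,\mu}^{(N)}$. For $N\ge \ell(\lambda)+\ell(\mu)$, the classical theory of mixed tensors for $\GL_N$ says the traceless part of $\Sc^\lambda\C^N\otimes\Sc^\mu(\C^N)^\vee$ is the irreducible rational module $R_N^{(\lambda,\mu)}$ of \cref{def:elratrep}. Its highest weight is $(\lambda_1,\dots,0,\dots,-\mu_1)$, and by the earlier proposition its character is $s^{(N)}_{\lambda,\mu}$. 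Now let $W\subseteq V_{\lambda,\mu}$ be a nonzero subrepresentation. Then $W\cap V^{(N)}_{\lambda,\mu}$ is a nonzero $\GL_N$-submodule for $N$ large, so it equals all of $V^{(N)}_{\lambda,\mu}$. Taking the union over $N$ gives $W=V_{\lambda,\mu}$.

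For irredundancy, I will show that the highest weight is an isomorphism invariant. A $\GL_\infty$-isomorphism restricts to $\GL_N$-isomorphisms of the finite pieces, so $V_{\lambda,\mu}\cong V_{\alpha,\beta}$ forces $s^{(N)}_{\lambda,\mu}=s^{(N)}_{\alpha,\beta}$ for all large $N$. Since the Laurent Schur polynomials are linearly independent (\cref{theo:Schur-basis}), this gives $(\lambda,\mu)=(\alpha,\beta)$.

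Completeness is the main obstacle. Let $W$ be an irreducible subquotient of $\bigoplus T_{n,m}$. The plan is to prove that each $T_{n,m}$ has a finite filtration whose graded pieces are the $V_{\alpha,\beta}$ with $|\alpha|=n-k$ and $|\beta|=m-k$. The natural candidate is the filtration by the kernels of iterated contractions. At each step the quotient should embed into $T_{n-k,m-k}$, and the traceless part of each piece should decompose via Schur–Weyl duality for $\gS_n\times\gS_m$ into $\bigoplus \Sc^\alpha\otimes\Sc^\beta$-type traceless pieces. The hard point is surjectivity of contraction onto $T_{n-1,m-1}$ together with semisimplicity of the traceless piece; both fail for $\GL_\infty$ in general, since $T_{1,1}$ is not semisimple. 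So I would verify these level by level for $\GL_N$ with $N$ large and pass to the union, using that contractions commute with the inclusions. Given the filtration, Jordan–Hölder-type reasoning for finite-length modules shows that every irreducible subquotient of $\bigoplus T_{n,m}$ is some $V_{\alpha,\beta}$. This is essentially the argument of \cite[\S 3]{SS15}.
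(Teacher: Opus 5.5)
The paper gives no argument here; it only cites \cite[Prop.~3.1.4]{SS15}. Your construction of $V_{\lambda,\mu}$ as the traceless part of $\Sc^\lambda V\otimes\Sc^\mu V_\vee$ is the standard one. So is your irreducibility argument: every nonzero vector lies in some $V^{(N)}_{\lambda,\mu}$, which is an irreducible $\GL_N$-module for $N\ge\ell(\lambda)+\ell(\mu)$. Both are fine. The irredundancy and completeness parts have gaps, though both can be closed with what you already have.

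\textbf{Irredundancy.} You assume that an abstract $\GL_\infty$-isomorphism $f\colon V_{\lambda,\mu}\to V_{\alpha,\beta}$ carries $V^{(N)}_{\lambda,\mu}$ onto $V^{(N)}_{\alpha,\beta}$. These pieces are defined through a particular embedding into tensor space, so you need an intrinsic description of them. Here is one that works. Let $H_N\subseteq\GL_\infty$ be the subgroup of matrices that fix $e_1,\dots,e_N$; it is a copy of $\GL_\infty$ on the remaining coordinates and commutes with $\GL_N$. Let $V'$ and $V'_\vee$ be spanned by the $e_i$ and $e_i^\vee$ with $i>N$. Then $(V')^{\otimes a}\otimes(V'_\vee)^{\otimes b}$ has no nonzero $H_N$-invariants for $(a,b)\neq(0,0)$, because a finite permutation matrix in $H_N$ moves the finite support of any such tensor onto a disjoint set of indices. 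Hence $W^{H_N}=W\cap\bigl((\C^N)^{\otimes n}\otimes((\C^N)^\vee)^{\otimes m}\bigr)$ for every subrepresentation $W\subseteq T_{n,m}$. So $f$ restricts to a $\GL_N$-isomorphism $V^{(N)}_{\lambda,\mu}\to V^{(N)}_{\alpha,\beta}$, and your highest-weight comparison then applies.

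\textbf{Completeness.} This paragraph misidentifies the obstacle, and the remedy you propose is not valid.
\begin{itemize}
\item Contraction $T_{n,m}\to T_{n-1,m-1}$ is surjective over $\GL_\infty$: insert $e_M$ and $e_M^\vee$ with $M$ larger than every index occurring.
\item The traceless part $T^\circ_{a,b}$ is semisimple over $\GL_\infty$. It is stable under $\gS_a\times\gS_b$, so its isotypic decomposition is exactly your Schur--Weyl step. That step exhibits $T^\circ_{a,b}$ as a finite direct sum of copies of the $V_{\alpha,\beta}$ with $|\alpha|=a$, $|\beta|=b$, which you have already shown are irreducible.
\item What actually fails over $\GL_\infty$ is only that the filtration does not split.
\item Verifying things level by level for $\GL_N$ and passing to the union cannot give semisimplicity. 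Indeed $T_{1,1}=\bigcup_N\C^N\otimes(\C^N)^\vee$ is a union of semisimple $\GL_N$-modules but is not semisimple.
\end{itemize}

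Here is an argument that closes the gap.
\begin{itemize}
\item Let $F_k\subseteq T_{n,m}$ be the common kernel of all $(k+1)$-fold iterated contractions, with $F_{-1}:=0$. Then $F_0=T^\circ_{n,m}$ and $F_{\min(n,m)}=T_{n,m}$.
\item Applying all $k$-fold iterated contractions maps $F_k$ into a finite direct sum of copies of $T^\circ_{n-k,m-k}$, since one further contraction kills the image. The kernel of this map is exactly $F_{k-1}$.
\item A subrepresentation of a finite direct sum of irreducibles is isomorphic to a direct sum of some of them. So each layer $F_k/F_{k-1}$ is a finite direct sum of $V_{\alpha,\beta}$'s.
\item Therefore every finite direct sum of $T_{n,m}$'s has finite length with all composition factors of the form $V_{\alpha,\beta}$, and Jordan--H\"older gives completeness.
\end{itemize}
Only injectivity of the layer maps is needed. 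Surjectivity matters only if you want to identify the layers exactly.
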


For the explicit definition of \(V_{\lambda,\mu}\), see \cite[\S 3.1.2]{SS15}. In particular, we have \(V_{(1),\varnothing} = V\) and \(V_{\varnothing,(1)} = V_\vee\).

We now extend character theory to the representations of \(\GL_\infty\). Unfortunately, there is no obvious way to generalize \cref{def:char} since non-trivial simple algebraic representations are infinite-dimensional. Likewise, we cannot generalize \cref{eq:char} since the weight spaces can be infinite dimensional as well. Thus, we turn to the formalism of the \textit{Grothendieck ring}.

\begin{defin}
    The \emph{representation ring  of the $\GL_\infty$ group}, (or the \emph{Grothendieck ring}), denoted \(R(\GL_\infty)\), is the abelian group generated by isomorphism classes of algebraic $\GL_\infty$ representations $[W]$, modulo the relations
    \begin{align}
        [W] - [W'] - [W''] = 0
    \end{align}
    for all exact sequences \(0 \to W' \to W \to W'' \to 0\) of \(\GL_\infty\) representations. Multiplication in the Grothendieck ring is given by the bilinear extension of 
\begin{align}
    [W] \cdot [U] = [W \otimes U].
\end{align}
\end{defin}

\begin{eg}
    The relation 
    \begin{align*}
        [W] + [U] = [W \oplus U]
    \end{align*}
    is given by the exact sequence \(0 \to W \to W \oplus U  \to U \to 0\).
\end{eg}

\begin{defin}
    Let $W$ be a representation. A \emph{composition series} is a series of subrepresentations
    \[
0 = W_0 \subsetneq W_1 \subsetneq \cdots \subsetneq W_n = W
\]
such that the quotients $W_{i+1}/W_i$, called \emph{composition factors}, are irreducible. 
\end{defin}

\begin{prop}~
    \begin{enumerate}
        \item  Any algebraic representation of \(\GL_\infty\) has a composition series. \label{prop:terminates}
        \item Any two composition series have the same length and, up to reordering, isomorphic composition factors.  \label{prop:jordan-holder}
    \end{enumerate}
\end{prop}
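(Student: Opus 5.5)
Part (a) reduces to showing that every algebraic representation of $\GL_\infty$ has finite length. Part (b) is then the usual Jordan--Hölder argument, which only uses the lattice of subrepresentations being modular.

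For (a), I would first show that each $T_{n,m} = V^{\otimes n}\otimes V_\vee^{\otimes m}$ has finite length. The natural input is the structure theory in \cite[\S 3.1]{SS15}: it gives a finite filtration of $T_{n,m}$ whose successive quotients are direct sums, with finite multiplicities, of simple modules $V_{\lambda,\mu}$ with $|\lambda|\le n$ and $|\mu|\le m$. Concretely, the filtration is by the number of contractions, i.e. the kernels of the trace maps $V\otimes V_\vee\to\C$ applied to pairs of factors. Each graded piece is a finite direct sum of images of Schur functors $\Sc^\lambda V\otimes \Sc^\mu V_\vee$ under Young symmetrizers, restricted to the traceless part. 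By \cite[Prop.~3.1.4]{SS15} that traceless part is a finite sum of the irreducible $V_{\lambda,\mu}$.

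Summing the lengths of the graded pieces gives a bound $N(n,m)$ on the length of any chain of subrepresentations of $T_{n,m}$. The same holds for finite direct sums of the $T_{n,m}$, by additivity of length along $0\to A\to A\oplus B\to B\to 0$. Length is monotone under subrepresentations and quotients, since a strict chain in a subquotient lifts to a strict chain in the ambient module. Hence every algebraic representation $W$ has finite length. Choosing a maximal strict chain of subrepresentations then yields a composition series: maximality forces each quotient $W_{i+1}/W_i$ to be irreducible. The main obstacle is the finite-length claim for $T_{n,m}$, which is genuinely infinite-dimensional. I would rely on \cite[\S 3.1]{SS15}, where the socle filtration of $T_{n,m}$ is described and has finite length.

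For (b), I would run the standard Jordan--Hölder induction on the minimal length $\ell(W)$ of a composition series of $W$. Suppose $0=W_0\subsetneq\cdots\subsetneq W_r=W$ and $0=W'_0\subsetneq\cdots\subsetneq W'_s=W$ are two composition series. If $W_{r-1}=W'_{s-1}$, apply induction to $W_{r-1}$. Otherwise set $U=W_{r-1}\cap W'_{s-1}$. Because $W/W_{r-1}$ is simple and $W_{r-1}+W'_{s-1}\supsetneq W_{r-1}$, we get $W_{r-1}+W'_{s-1}=W$. The second isomorphism theorem, which holds in the abelian category of $\GL_\infty$-representations, then gives
\begin{align*}
W_{r-1}/U\cong W/W'_{s-1},\qquad W'_{s-1}/U\cong W/W_{r-1}.
\end{align*}
Take a composition series of $U$, which exists by (a). Applying the induction hypothesis to $W_{r-1}$ and to $W'_{s-1}$ shows that both original series have the same length and the same factors up to reordering and isomorphism.
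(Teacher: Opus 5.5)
Your proposal is correct and takes the same route as the paper. The paper's proof just cites \cite[Prop.~3.1.5]{SS15} for the existence of composition series and the Jordan--H\"older theorem for finite-length modules for (b); you have unpacked both citations, deriving finite length from the contraction filtration of $T_{n,m}$ and writing out the Jordan--H\"older induction. One small point in (b): take the first series to have minimal length, and apply the inductive hypothesis to $W_{r-1}$ before $W'_{s-1}$. This shows that $U$ has a series of length $r-2$, hence $W'_{s-1}$ has one of length $r-1$, which is what makes the hypothesis applicable to $W'_{s-1}$.
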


\begin{proof}
    \cref{prop:terminates} is proved in {\cite[Prop. 3.1.5]{SS15}}. \cref{prop:jordan-holder} is a consequence of the Jordan-H\"older theorem for finite length modules. 
\end{proof} 

\begin{prop}\label{prop:comps}
    Let $W$ be a representation and $0 = W_0 \subsetneq W_1 \subsetneq \cdots \subsetneq W_n = W$ be a composition series. Then, 
    \begin{align}
        [W] = [W_n/W_{n - 1}] + \cdots + [W_1/W_0].
    \end{align}
Thus, the simple representations span $R(\GL_\infty)$ as an abelian group.
\end{prop}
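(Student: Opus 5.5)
We argue by induction on the length $n$ of the composition series, using only the defining relations of $R(\GL_\infty)$. For each $1 \le i \le n$, the inclusion $W_{i-1} \subseteq W_i$ of subrepresentations gives a short exact sequence of $\GL_\infty$ representations
\[
0 \longrightarrow W_{i-1} \longrightarrow W_i \longrightarrow W_i/W_{i-1} \longrightarrow 0,
\]
where the quotient carries the quotient representation \cref{eq:quo-rho}. The defining relation of the Grothendieck ring then yields $[W_i] = [W_{i-1}] + [W_i/W_{i-1}]$. All these representations are algebraic, since subquotients of subquotients of finite direct sums of the $T_{n,m}$ are again such subquotients. Hence their classes make sense in $R(\GL_\infty)$.

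Summing these $n$ relations, or equivalently inducting on $i$, telescopes to
\[
[W] = [W_n] = [W_0] + \sum_{i=1}^n [W_i/W_{i-1}].
\]
It remains to check that $[W_0] = [0] = 0$. This follows by applying the defining relation to the exact sequence $0 \to 0 \to 0 \to 0 \to 0$, which gives $[0] - [0] - [0] = 0$. This proves the displayed identity.

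For the spanning statement, note that by \cref{prop:terminates} every algebraic representation $W$ admits a composition series. By the identity just proved, $[W]$ is then a sum of classes of composition factors, and these factors are irreducible by definition. Since $R(\GL_\infty)$ is generated as an abelian group by the classes $[W]$, it is also generated by the classes of simple representations. By \cite[Prop. 3.1.4]{SS15}, these are the classes $[V_{\lambda,\mu}]$.

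None of these steps is a genuine obstacle. The only point needing care is that the quotients $W_i/W_{i-1}$ are again algebraic, so that they define elements of $R(\GL_\infty)$; this follows directly from the definition of algebraic representations as subquotients.
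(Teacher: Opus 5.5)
Your proof is correct and follows the same route as the paper: induction on the length of the composition series using the exact sequences $0 \to W_{i-1} \to W_i \to W_i/W_{i-1} \to 0$, with the spanning claim deduced from the existence of composition series. You also make explicit two points the paper leaves implicit, namely that $[0]=0$ and that the composition factors are again algebraic.
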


\begin{proof}
    Using induction on the length of the composition series, and noting that we have the exact sequence \(0 \to W_{n - 1} \to W_n \to W_n/W_{n - 1} \to 0\),
    we get
    \[[W_n] = [W_n/W_{n -1}] + [W_{n - 1}]  =   [W_n/W_{n -1}] + [W_{n - 1}/W_{n - 2}] + \cdots + [W_1/W_0],\]
   where the second equality is given by using the induction hypothesis on \(W_{n -1}\), which has length \(n - 1\).
\end{proof}

\cref{prop:comps} can be strengthened to show that these simple representations are also linearly independent.

\begin{prop}[{\cite[C-4.93]{Rot17}}]
    The isomorphism classes of the simple representations form a basis of \(R(\GL_\infty)\).
\end{prop}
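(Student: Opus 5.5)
Spanning is already \cref{prop:comps}, so the work is linear independence. I would construct, for each signed partition $(\lambda,\mu)$, an additive function $m_{\lambda,\mu}\colon R(\GL_\infty)\to\Z$ with $m_{\lambda,\mu}([V_{\alpha,\beta}])=\delta_{\lambda\alpha}\delta_{\mu\beta}$. Any relation $\sum c_{\alpha,\beta}[V_{\alpha,\beta}]=0$ then gives $c_{\lambda,\mu}=m_{\lambda,\mu}(0)=0$.

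Set $m_{\lambda,\mu}([W])$ to be the number of composition factors of $W$ isomorphic to $V_{\lambda,\mu}$. By the earlier proposition, $W$ has a composition series and this multiplicity is independent of the choice, so the assignment is well defined on isomorphism classes. Extend it $\Z$-linearly to the free abelian group on isomorphism classes. To see that it descends to $R(\GL_\infty)$, I must check additivity on short exact sequences $0\to W'\to W\to W''\to 0$.

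For this, take composition series $0=W'_0\subsetneq\cdots\subsetneq W'_a=W'$ of $W'$ and $0=W''_0\subsetneq\cdots\subsetneq W''_b=W''$ of $W''$. Pull back the series of $W''$ along $p\colon W\to W''$ and splice it with the image of the series of $W'$:
\[
0\subsetneq W'_1\subsetneq\cdots\subsetneq W'\subsetneq p^{-1}(W''_1)\subsetneq\cdots\subsetneq p^{-1}(W''_b)=W.
\]
The successive quotients are $W'_{i}/W'_{i-1}$ and $p^{-1}(W''_j)/p^{-1}(W''_{j-1})\cong W''_j/W''_{j-1}$, the latter by the third isomorphism theorem. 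All are irreducible, so this is a composition series of $W$, and its factors are those of $W'$ together with those of $W''$. Hence $m_{\lambda,\mu}([W])=m_{\lambda,\mu}([W'])+m_{\lambda,\mu}([W''])$. Finally $m_{\lambda,\mu}([V_{\alpha,\beta}])=\delta$ holds because $V_{\alpha,\beta}$ is irreducible with trivial series $0\subsetneq V_{\alpha,\beta}$, and the $V_{\lambda,\mu}$ are pairwise non-isomorphic by \cite[Prop.~3.1.4]{SS15}.

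The main obstacle is the well-definedness of $m_{\lambda,\mu}$. This rests entirely on Jordan--Hölder, which needs finite length; finite length is exactly \cite[Prop.~3.1.5]{SS15}. It also relies on subrepresentations and quotients of algebraic representations remaining algebraic, which holds because the definition is closed under subquotients. Note too that $R(\GL_\infty)$ is indexed by isomorphism classes, which may form a proper class in general; restricting to algebraic representations, each of which is isomorphic to a subquotient of some $\bigoplus T_{n,m}$, avoids set-theoretic issues.
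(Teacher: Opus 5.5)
Your proposal is correct and is essentially the proof the paper relies on. The paper does not argue this itself but cites \cite[C-4.93]{Rot17}, the general fact that the Grothendieck group of finite-length modules is free on the classes of simples, which is proved exactly by your Jordan--H\"older multiplicity functions together with splicing composition series along a short exact sequence; the inputs you identify (finite length from \cite[Prop.~3.1.5]{SS15}, and closure of algebraic representations under subquotients) are precisely what is needed to place $R(\GL_\infty)$ in that setting.
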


    Note that it may happen for two non-isomorphic \(\GL_\infty\) representations \(W,U\) that \([W] = [U]\) in \(R(\GL_\infty)\) since non-semisimple representations of $\GL_\infty$ exist (see \cite[1.2.2]{SS15}). Indeed, take any non-semisimple representation \(W\) and let \(W_1,\dots,W_n\) be its composition factors, then
\[
[W] = [W_1] + \cdots + [W_n] = [W_1 \oplus \cdots \oplus W_n].
\]
But clearly \(W\) is not isomorphic to \(W_1 \oplus \cdots \oplus W_n\) since that would make \(W\) semisimple.

\begin{theo}[{\cite[\S 3.2]{SS19}, \cite[§2]{Koi89}}]
There is a ring isomorphism
\begin{align*}
    \mathrm{ch}\colon R(\GL_\infty) \longrightarrow \LSym
\end{align*}
such that \(\mathrm{ch}(V_{\lambda,\mu}) = s_{\lambda, \mu}\). 
\end{theo}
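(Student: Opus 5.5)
Since the classes $[V_{\lambda,\mu}]$ of the simple representations form a $\Z$-basis of $R(\GL_\infty)$ (by the proposition citing \cite[C-4.93]{Rot17}), and the Laurent Schur functions $s_{\lambda,\mu}$ form a $\Z$-basis of $\LSym$ (\cref{prop:sb}), I will \emph{define} $\mathrm{ch}$ as the $\Z$-linear map sending $[V_{\lambda,\mu}]$ to $s_{\lambda,\mu}$. This is automatically a well-defined isomorphism of abelian groups, because it matches a basis with a basis bijectively. The whole content is then multiplicativity,
\[
\mathrm{ch}([V_{\lambda,\mu}\otimes V_{\alpha,\beta}]) = s_{\lambda,\mu}\, s_{\alpha,\beta},
\]
together with $\mathrm{ch}(1)=1$. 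The unit is easy: $[V_{\varnothing,\varnothing}]$ is the trivial representation, and $s_{\varnothing,\varnothing}=1$ by \cref{def:lsf}.

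To prove multiplicativity I will reduce to finite $n$ using the maps $\rho_n$. For an algebraic representation $W$, I want a finite-rank shadow. The natural candidate is a truncation functor $W\mapsto W^{(n)}$: the subspace of $W$ spanned by weight vectors whose weight is supported on $\{1,\dots,n\}$, viewed as a rational $\GL_n$-representation. Weight decompositions exist by \cite[\S 3.1.7]{SS15}. The argument then has three parts.
\begin{enumerate}
\item Show that truncation is exact and finite dimensional on algebraic representations, and that $(W\otimes U)^{(n)}\cong W^{(n)}\otimes U^{(n)}$ for $n$ large relative to the weights involved. I would check this first on $T_{p,q}$, where the weight vectors are explicit tensors of basis vectors $e_i$ and $e_j^\vee$.
\item Show that $V_{\lambda,\mu}^{(n)}\cong \mathrm{R}_n^{(\lambda_1,\dots,0,\dots,-\mu_1)}$ once $n\ge \ell(\lambda)+\ell(\mu)$. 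This would use the explicit construction in \cite[\S 3.1.2]{SS15} as the image of a Young-symmetrized traceless part of $T_{|\lambda|,|\mu|}$, together with highest-weight theory for $\GL_n$.
\item Combine these with the character formula $\chi_{\mathrm{R}_n^\lambda}=s_\lambda$ and with \cref{prop:rhoschur}. For every $W$ and every $n$ large enough, this gives
\[
\chi_{W^{(n)}}=\rho_n(\mathrm{ch}[W]).
\]
\end{enumerate}

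Multiplicativity then follows. Using characters of tensor products,
\[
\rho_n\bigl(\mathrm{ch}[V_{\lambda,\mu}\otimes V_{\alpha,\beta}]\bigr)=\chi_{V_{\lambda,\mu}^{(n)}}\,\chi_{V_{\alpha,\beta}^{(n)}}=\rho_n(s_{\lambda,\mu}s_{\alpha,\beta}).
\]
Both sides lie in some $A_d$, and $\rho_n|_{A_d}$ is injective for $n\ge 2d+1$ (\cref{cor:giraffe}), so equality holds in $\LSym$.

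The main obstacle is the second part: identifying the truncation of $V_{\lambda,\mu}$ with the correct irreducible rational $\GL_n$-module. The tensor-compatibility in the first part is also delicate, since tensor products of algebraic $\GL_\infty$-representations are not semisimple. As a fallback, I would avoid truncations and instead use Koike's result \cite[§2]{Koi89} directly. That result gives the product rule
\[
s_{\lambda,\mu}s_{\alpha,\beta}=\sum c\, s_{\gamma,\delta},
\]
with coefficients given by Littlewood--Richardson sums with contractions. I would then match these against the composition multiplicities of $V_{\lambda,\mu}\otimes V_{\alpha,\beta}$ computed in \cite[\S 3.2]{SS19}.
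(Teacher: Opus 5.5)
Your reduction is sound. Defining $\mathrm{ch}$ by matching the basis $\{[V_{\lambda,\mu}]\}$ with $\{s_{\lambda,\mu}\}$, checking the unit, and detecting equalities in $\LSym$ through $\rho_n|_{A_d}$ for $n\ge 2d+1$ all work. The gap is the truncation $W\mapsto W^{(n)}$. For mixed tensors, a weight does not record which indices occur, because a factor $e_i$ cancels against a factor $e_i^\vee$. Every $e_i\otimes e_i^\vee\in V\otimes V_\vee$ has weight $0$, which is supported in $\{1,\dots,n\}$. Hence $(V\otimes V_\vee)^{(n)}$ is infinite-dimensional for every $n$, so it is not $V^{(n)}\otimes (V_\vee)^{(n)}=\C^n\otimes(\C^n)^\vee$. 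Likewise $V_{(1),(1)}^{(n)}$ contains all $e_i\otimes e_i^\vee-e_j\otimes e_j^\vee$ with $i,j>n$, so it is not $\mathfrak{sl}_n$. Thus the first two steps of your plan fail as stated. This is exactly the obstruction the paper points to when it notes that weight spaces of $\GL_\infty$-representations can be infinite-dimensional.

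The natural finite-rank shadow is instead the space of invariants $W^{G_n'}$ under $G_n'=\{\operatorname{diag}(I_n,g)\}\subseteq\GL_\infty$. For a subrepresentation $W\subseteq T_{p,q}$ this equals $W\cap\bigl((\C^n)^{\otimes p}\otimes((\C^n)^\vee)^{\otimes q}\bigr)$. It is therefore finite-dimensional and compatible with tensor products of such subrepresentations. However, this functor is only left exact, and it genuinely fails to be exact for small $n$. For $n=1$, the contraction sequence $0\to V_{(1,1),(1)}\to \bigwedge^2V\otimes V_\vee\to V\to 0$ becomes $0\to 0\to 0\to \C\to 0$. This matches $\rho_1(s_{(1,1),(1)})=-x_1$, which is not the character of any representation.

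So your third step needs two facts you do not supply. First, that $W\mapsto W^{G_n'}$ is exact along a composition series of $V_{\lambda,\mu}\otimes V_{\alpha,\beta}$ once $n$ is large. Second, that $V_{\lambda,\mu}^{G_n'}\cong\mathrm{R}_n^{(\lambda,\mu)}$ for $n\ge\ell(\lambda)+\ell(\mu)$. Together these are precisely the specialization results of Sam--Snowden and Koike. The paper gives no argument beyond citing \cite{SS19} and \cite{Koi89}, so your fallback coincides with the paper's route. Your main line does not go through without these inputs.
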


\begin{defin} Given a representation \(V\) of \(\GL_\infty\), we define its \emph{character} \(\chi_V \in \LSym\) as
\begin{align}
    \chi_V = \mathrm{ch}([V]).
\end{align}
\end{defin}

The characters allow us to obtain information about the group representations from the structure of \(\LSym\). 

 In general, \(s_{\lambda,\mu}\neq s_{\lambda,\varnothing}s_{\varnothing,\mu}\), which we illustrate in \cref{ex:schur} below. 
\begin{eg}\label{ex:schur}
   Let \(\lambda = \mu = (1)\). We have $ s_{(1),\varnothing}=s_1(X)$ and $s_{\varnothing,(1)}=s_1(Y)$.
   Therefore, the product $s_{(1),\varnothing}s_{\varnothing,(1)}$ is given by $s_1(X)s_1(Y)$. 
   Applying \cref{def:lsf} with $\lambda=\mu=(1)$, we get
    \[
    s_{(1),(1)}=s_1(X)s_1(Y)-1. 
    \]
    Hence, we have
    \begin{equation}\label{eq:s11}
        s_{(1),\varnothing}s_{\varnothing,(1)}
        =
        s_{(1),(1)}+1,
    \end{equation}
    so in particular, $s_{(1),(1)}
        \neq
        s_{(1),\varnothing}s_{\varnothing,(1)}$.
    \end{eg}
    
    Equation \cref{eq:s11} implies that the composition factors of \(V \otimes V_\vee = V_{(1),\varnothing} \otimes V_{\varnothing,(1)}\) are \(V_{(1),(1)}\) and \(V_{\varnothing,\varnothing} = \C\) (the trivial representation). However, \(V \otimes V_\vee\) is not semisimple \cite[\S 3.1.1]{SS15}, thus, it cannot be isomorphic to the semisimple representation \(V_{(1),(1)} \oplus \C\).

   In the analogous finite-dimensional case $n>1$, applying $\rho_n$ to
\cref{eq:s11} gives
\[
\chi_{V_n\otimes V_n^\vee}
=
s_{(1),(1)}^{(n)}+1.
\]
Under the natural $\GL_n$-equivariant isomorphism
\[
V_n\otimes V_n^\vee \longrightarrow \operatorname{End}(V_n),
\qquad
v\otimes f \longmapsto \bigl(w\mapsto f(w)v\bigr),
\]
the action on $\operatorname{End}(V_n)$ is conjugation. Since trace is
invariant under conjugation, the decomposition
\[
\operatorname{End}(V_n)
=
\mathfrak{sl}_n\oplus \C\operatorname{Id}_{V_n}
\]
is a decomposition into $\GL_n$-subrepresentations. Hence
\[
V_n\otimes V_n^\vee
\cong
\mathfrak{sl}_n\oplus\C.
\]
Thus, unlike $V\otimes V_\vee$, the finite-dimensional tensor product
splits as a direct sum.

\section*{Acknowledgments}
This research was partially supported by the Natural Sciences and Engineering Research Council of Canada (NSERC), funding reference number RGPIN-2023-03842. The authors extend their gratitude to the Fields Institute for Research in Mathematical Sciences for hosting this project as part of the 2026 Fields Undergraduate Summer Research Program and for its generous support. The authors are especially grateful to the project supervisors, Alistair Savage and Yaolong Shen, for their guidance throughout this project.

The authors used OpenAI’s ChatGPT-5.6 Sol to assist with proofreading the manuscript and to identify possible gaps and suggest corrections. All such suggestions were independently checked and revised by the authors, who assume full responsibility for the correctness and content of the paper.

\bibliographystyle{alphaurl}
\bibliography{LaurentSym}
\end{document}